\title{Local well-posedness of the two-layer shallow water model with free surface\thanks{This 
        work was supported by the French Naval Hydrographic and Oceanographic Service.}}
\author{Ronan~Monjarret\thanks{Institut de Math\'ematiques de Toulouse, Universit\'e Toulouse III - Paul Sabatier, 118 route de Narbonne, 31062 Toulouse Cedex 9, France ({\tt ronan.monjarret@math.univ-toulouse.fr}).}}
\begin{document}

\maketitle

\begin{abstract}
In this paper, we address the question of the hyperbolicity and the local well-posedness of the two-layer shallow water model, with free surface, in two dimensions. We first provide a general criterion that proves the symmetrizability of this model, which implies hyperbolicity and local well-posedness in $\mathcal{H}^s(\mathbb{R}^2)$, with $s>2$. Then, we analyze rigorously the eigenstructure associated to this model and prove a more general criterion of hyperbolicity and local well-posedness, under weak density-stratification assumption. Finally, we consider a new {\em conservative} two-layer shallow water model, prove the hyperbolicity and the local well-posedness and rely it to the basic two-layer shallow water model.
\end{abstract}

\begin{keywords} 
shallow water, two-layer, free surface, symmetrizability, hyperbolicity, vorticity.
\end{keywords}
%\todo{retirer package todo}

\begin{AMS}
15A15, 15A18, 35A07, 35L45, 35P15
\end{AMS}
%\todo{modifier AMS subject}

\pagestyle{myheadings}
\thispagestyle{plain}
\markboth{Ronan Monjarret}{Two-layer shallow water model with free surface}

\section{Introduction}

We consider two immiscible, homogeneous, inviscid and incompressible superposed fluids, with no surface tension; the pressure is assumed to be hydrostatic, constant at the free surface and continuous at the internal surface. Moreover, the shallow water assumption is considered: there exist vertical and horizontal characteristic lengths and the vertical one is assumed much smaller than the horizontal one.

\noindent
For more details on the derivation of these equations, see \cite{desaint1871theorie}, \cite{pedlosky1982geophysical} and \cite{gill1982atmosphere} for the one-layer model; \cite{long1956long} for the two-layer model with rigid lid; \cite{schijf1953theoretical}, \cite{ovsyannikov1979two} and \cite{liska1997analysis} for the two-layer model with free surface. In the {\em curl}-free case, these models have been rigorously obtained as an asymptotic model of the three-dimensional {\em Euler} equations, under the shallow water assumption, in \cite{alvarez2007nash} for the one-layer model with free surface and in \cite{duchene2010asymptotic} for the two-layer one. With no assumption on the vorticity, it has been obtained, only in the one layer case, in \cite{Castrowellposed2014}.

\noindent
The aim of this paper is to obtain criteria of symmetrizability and hyperbolicity of the two-layer shallow water model, in order to insure the local well-posedness of the associated initial value problem.

\noindent
{\em Outline:} In this section, the model is introduced. In the $2^{\mathrm{nd}}$ one, useful definitions are reminded and a sufficient condition of hyperbolicity and local well-posedness in $\mathcal{H}^s(\mathbb{R}^2)$, is given. In the $3^{\mathrm{rd}}$ section, the hyperbolicity of the model is exactly characterized in one and two dimensions. In the $4^{\mathrm{th}}$ one, asymptotic analysis is performed, in order to deduce a new criterion of local well-podeness in $\mathcal{H}^s(\mathbb{R}^2)$. Finally, in the last section, after reminding the horizontal vorticity, a new model is introduced: benefits of this model are explained, local well-posedness, in $\mathcal{H}^s(\mathbb{R}^2)$, is proved and links, with the two-layer shallow water model, are justified.

\subsection{Governing equations}

\noindent
The $i^{\mathrm{th}}$ layer of fluid, $i \in \{1,2\}$, has a constant density $\rho_i$, a depth-averaged horizontal velocity $\boldsymbol{u_i}(t,X):={}^{\top} (u_i(t,X),v_i(t,X))$ and a thickness designated by $h_i(t,X)$, where $t$ denotes the time and $X:=(x,y)$ the horizontal cartesian coordinates, as drawn in figure \ref{2figure2layer}.

\noindent
The governing equations of the two-layer shallow water model with free surface are given by one mass conservation for each layer:
\begin{equation}
\frac{\partial h_i}{\partial t} + {\bf \nabla} {\bf \cdot} (h_i \boldsymbol{u_i}) = 0,\ i \in \{1,2\},
\label{2massconservation}
\end{equation}
and an equation on the depth-averaged horizontal velocity in each layer:
\begin{equation}
\frac{\partial \boldsymbol{u_i}}{\partial t}+(\boldsymbol{u_i} {\bf \cdot} {\bf \nabla}) \boldsymbol{u_i} + {\bf \nabla} P_i - f \boldsymbol{u_i}^{\bot} = 0,\ i \in \{1,2\},
\label{2momentumconservation}
\end{equation}
where $\boldsymbol{u_i}^{\bot}:={}^{\top} (v_i,-u_i)$, $P_i:=g\big( b+\sum_{k=1}^2 \alpha_{i,k} h_k \big)$ is the fluid pressure, with $g$ the gravitational acceleration, $b$ the bottom topography, $f$ the {\em Coriolis} parameter and $(\alpha_{i,k})_{(i,k) \in [\![1,n]\!]}$ given by

\begin{equation}
 \alpha_{i,k}=\left\{
 \begin{array}{ll}
 \frac{\rho_1}{\rho_2}, & \mathrm{if}\ k = i-1 = 1,\\[1pt]
 1, & \mathrm{otherwise}.
 \end{array}
 \right.
 \nonumber
\end{equation}

\begin{figure}[ht]
\centering
\includegraphics[width=7cm,height=50mm]{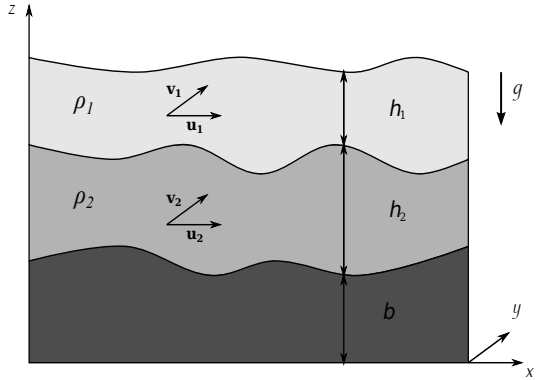}
\caption{Configuration of the two-layer shallow water model with free surface}
\label{2figure2layer}
\end{figure}

\noindent
The multi-layer shallow water model with free surface describes fluids such as the ocean: the evolution of the density can be assumed piecewise-constant, the horizontal characteristic length is much greater than the vertical one and the pressure can be expected only dependent of the height of fluid. The two-layer model is a simplified case, where we consider the density has only two values. This model describes well the straits of {\em Gibraltar}, where the Mediterrenean sea meets the Atlantic ocean.

\noindent
By introducing the vector
\begin{equation}
{\bf u}:={}^{\top} (h_1,h_2,u_1,u_2,v_1,v_2),
\label{2u}
\end{equation}
and $\gamma:=\frac{\rho_1}{\rho_2}$, the system (\ref{2massconservation}-\ref{2momentumconservation}) can be written as
\begin{equation}
\frac{\partial {\bf u}}{\partial t} + \mathsf{A}_x({\bf u}) \frac{\partial {\bf u}}{\partial x} + \mathsf{A}_y ({\bf u}) \frac{\partial {\bf u}}{\partial y} + {\bf b}({\bf u})=0,
\label{2systemmultilayer}
\end{equation}
where  $\mathsf{A}_x({\bf u})$, $ \mathsf{A}_y({\bf u})$ and ${\bf b}({\bf u})$ are defined by
\begin{equation}
 \mathsf{A}_x({\bf u}) :=  \left[
\begin{array}{cccccc}
 u_1 & 0 & h_1 & 0 & 0 & 0 \\
 0 & u_2 & 0 & h_2 & 0 & 0 \\
 g & g & u_1 & 0 & 0 & 0  \\
 \gamma g & g & 0 & u_2 & 0 & 0 \\
 0 & 0 & 0 & 0 & u_1 & 0 \\
 0 & 0 & 0 & 0 & 0 & u_2 \\
\end{array}
\right],\
\mathsf{A}_y({\bf u}) :=  \left[
\begin{array}{cccccc}
 v_1 & 0 & 0 & 0 & h_1 & 0 \\
 0 & v_2 & 0 & 0 & 0 & h_2 \\
 0 & 0 & v_1 & 0 & 0 & 0 \\
 0 & 0 & 0 & v_2 & 0 & 0 \\
 g & g & 0 & 0 & v_1 & 0  \\
 \gamma g & g & 0 & 0 & 0 & v_2 \\
\end{array}
\right],
\label{2Axy}
\end{equation}
\begin{equation}
{\bf b}({\bf u}):= {}^{\top} \left(0,0,-f v_1+g \frac{\partial b}{\partial x},-f v_2 + g \frac{\partial b}{\partial x},f u_1 + g \frac{\partial b}{\partial y},f u_2+g \frac{\partial b}{\partial y}\right).
\label{2botom}
\end{equation}

\subsection{Rotational invariance} As the two-layer shallow water model with free surface is based on physical partial differential equations, it is predictable that it verifies the so-called rotational invariance:
\begin{equation}
\mathsf{A}({\bf u},\theta):=\cos(\theta) \mathsf{A}_x({\bf u}) + \sin(\theta) \mathsf{A}_y({\bf u})
\label{2Autheta}
\end{equation}
depends only on the matrix $\mathsf{A}_x$ and the parameter $\theta$. Indeed, there is the following relation:
\begin{equation}
\forall ({\bf u},\theta) \in \mathbb{R}^6 \times [0,2\pi],\ \mathsf{A}({\bf u},\theta)=\mathsf{P}(\theta)^{\mathsf{-1}} \mathsf{A}_x\left(\mathsf{P}(\theta){\bf u} \right) \mathsf{P}(\theta),
\label{2rotinv}
\end{equation}
with
\begin{equation}
\mathsf{P}(\theta) :=  \left[
\begin{array}{cccccc}
 1 & 0 & 0 & 0 & 0 & 0 \\
 0 & 1 & 0 & 0 & 0 & 0 \\
 0 & 0 & \cos(\theta) & 0 & \sin(\theta) & 0 \\
 0 & 0 & 0 & \cos(\theta) & 0 & \sin(\theta) \\
 0 & 0 & -\sin(\theta) & 0 & \cos(\theta) & 0  \\
 0 & 0 & 0 & -\sin(\theta) & 0 & \cos(\theta) \\
\end{array}
\right],
\label{2Ptheta}
\end{equation}
and an important point is $\mathsf{P}(\theta)^{\mathsf{-1}}={}^{\top}\mathsf{P}(\theta)$. The equality (\ref{2rotinv}) will permit to symplify the analysis of $\mathsf{A}({\bf u},\theta)$ to the analysis of $\mathsf{A}_x\left(\mathsf{P}(\theta){\bf u} \right)$.

\section{Well-posedness of the model: a $\boldsymbol{1^{\mathrm{st}}}$ criterion}\label{1sectionhyp}
In this section, we remind useful criteria of hyperbolicity and local well-posedness in $\mathcal{H}^s(\mathbb{R}^2)$. Connections between each one will be given and a $1^{\mathrm{st}}$ criterion of local well-posedness of the model (\ref{2systemmultilayer}) will be deduced.

\subsection{Hyperbolicity}
First, we give the definition, a useful criterion of hyperbolicity and an important property of hyperbolic model. We will consider the euclidean space $\mathcal{L}^2(\mathbb{R}^2),\|\cdot\|_{\mathcal{L}^2})$.

\begin{definition}[{\rm Hyperbolicity}]
Let ${\bf u}: \mathbb{R}^2 \mapsto \mathbb{R}^6$. The system {\em (\ref{2systemmultilayer})} is hyperbolic if and only
\begin{equation}
\exists\ c>0,\ \forall \theta \in [0,2\pi],\ \sup_{\tau \in \mathbb{R}}\| \exp \left( - i \tau \mathsf{A}({\bf u},\theta) \right) \|_{\mathcal{L}^2} \le c.
\label{2expAutheta}
\end{equation}
\label{2defcrithypuseful}
\end{definition}

\noindent
A useful criterion of hyperbolicity is in the next proposition.
\begin{proposition}
Let ${\bf u}: \mathbb{R}^2 \mapsto \mathbb{R}^6$. The model {\em (\ref{2systemmultilayer})} is hyperbolic if and only
\begin{equation}
\forall \theta \in [0,2\pi],\ \sigma \left( \mathsf{A}({\bf u},\theta) \right) \subset \mathbb{R}.
\label{2spectrumAutheta}
\end{equation}
\label{2propcrithypuseful}
\end{proposition}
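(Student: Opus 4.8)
The plan is to show that the hyperbolicity condition in Definition \ref{2defcrithypuseful}, i.e.\ uniform boundedness of $\exp(-i\tau\mathsf{A}({\bf u},\theta))$ in the $\mathcal{L}^2$ operator norm over all $\tau\in\mathbb{R}$, is equivalent to the spectral condition $\sigma(\mathsf{A}({\bf u},\theta))\subset\mathbb{R}$. Since $\mathsf{A}({\bf u},\theta)$ is a fixed $6\times 6$ real matrix for each choice of ${\bf u}$ and $\theta$, this is a purely linear-algebra statement about a single matrix $\mathsf{A}$, and I would prove the two implications separately, then note that the quantifier $\forall\theta\in[0,2\pi]$ passes through unchanged on both sides (and that by rotational invariance \eqref{2rotinv} one could even reduce to $\mathsf{A}_x$, though that is not needed here).

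For the direction ``spectrum real $\Rightarrow$ bounded'', I would first reduce to the case where $\mathsf{A}$ is in Jordan canonical form: write $\mathsf{A}=\mathsf{S}\mathsf{J}\mathsf{S}^{-1}$, so that $\exp(-i\tau\mathsf{A})=\mathsf{S}\exp(-i\tau\mathsf{J})\mathsf{S}^{-1}$ and $\|\exp(-i\tau\mathsf{A})\|_{\mathcal{L}^2}\le \kappa(\mathsf{S})\,\|\exp(-i\tau\mathsf{J})\|_{\mathcal{L}^2}$ with $\kappa(\mathsf{S})=\|\mathsf{S}\|\,\|\mathsf{S}^{-1}\|$ a constant independent of $\tau$. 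For a single Jordan block of size $m$ with eigenvalue $\lambda$, one computes $\exp(-i\tau\mathsf{J}_m(\lambda))=e^{-i\tau\lambda}\sum_{k=0}^{m-1}\frac{(-i\tau)^k}{k!}\mathsf{N}^k$ where $\mathsf{N}$ is the nilpotent shift; when $\lambda\in\mathbb{R}$ the scalar factor $e^{-i\tau\lambda}$ has modulus $1$, so the norm grows at worst polynomially in $\tau$ if $m\ge 2$ and is exactly $1$ if $m=1$. The key obstruction to uniform boundedness is therefore a nontrivial Jordan block, not the location of the eigenvalue per se; so to get the clean equivalence I expect the actual argument must also exploit that $\mathsf{A}({\bf u},\theta)$ is \emph{similar to a symmetric matrix} (or at least diagonalizable over $\mathbb{R}$) whenever its spectrum is real --- this is presumably where the structure of the shallow-water matrices, and the first criterion of the section, enters. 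I would invoke that diagonalizability (real spectrum plus a full set of eigenvectors, which for these physically-motivated matrices follows from the symmetrizability machinery of this section) to conclude $\mathsf{J}$ is diagonal, hence $\|\exp(-i\tau\mathsf{J})\|_{\mathcal{L}^2}=\max_j|e^{-i\tau\lambda_j}|=1$, and take $c=\kappa(\mathsf{S})$.

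For the converse ``bounded $\Rightarrow$ spectrum real'', I argue by contraposition: suppose $\lambda=a+ib\in\sigma(\mathsf{A})$ with $b\neq 0$, and let ${\bf w}\neq 0$ be a corresponding eigenvector. Then $\exp(-i\tau\mathsf{A}){\bf w}=e^{-i\tau\lambda}{\bf w}=e^{-i\tau a}e^{\tau b}{\bf w}$, so $\|\exp(-i\tau\mathsf{A}){\bf w}\|_{\mathcal{L}^2}=e^{\tau b}\|{\bf w}\|_{\mathcal{L}^2}$, which tends to $+\infty$ as $\tau\to+\infty$ (if $b>0$) or $\tau\to-\infty$ (if $b<0$); in either case $\sup_{\tau}\|\exp(-i\tau\mathsf{A})\|_{\mathcal{L}^2}=\infty$, contradicting \eqref{2expAutheta}. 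This direction is elementary. The main obstacle in the whole proof is the first direction: getting from ``real spectrum'' to ``uniformly bounded exponential'' genuinely requires ruling out Jordan blocks, so the honest statement is that one needs the diagonalizability that the symmetrizability results of this section supply; I would flag this dependence explicitly rather than pretend real spectrum alone suffices for a general matrix.
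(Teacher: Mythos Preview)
The paper does not supply a proof of this proposition; it is stated as a standard fact, with the remark following Proposition~\ref{2defhyp} pointing to \cite{Serre1996systemes} for details. So there is no in-paper argument to compare against, and your proposal has to be judged on its own merits.

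Your contrapositive argument for the direction ``bounded $\Rightarrow$ real spectrum'' is correct and standard.

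You have also correctly put your finger on a genuine tension in the forward direction: for a general real $6\times 6$ matrix $\mathsf{A}$, real spectrum alone does \emph{not} imply $\sup_{\tau}\|\exp(-i\tau\mathsf{A})\|<\infty$, since a nontrivial Jordan block with real eigenvalue produces polynomial growth in $\tau$. Taken literally, Definition~\ref{2defcrithypuseful} (uniform bound) and Proposition~\ref{2propcrithypuseful} (real spectrum) are not equivalent for arbitrary matrices. This is a real observation, not a failure of your reasoning.

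Where your proposal breaks down is in the resolution. You propose to close the gap by invoking ``the diagonalizability that the symmetrizability results of this section supply''. That is circular in the paper's logical flow: symmetrizability (Definition~\ref{2defsymm}, Proposition~\ref{2propsymm}) is a \emph{sufficient} condition for hyperbolicity, established only later and only under the extra hypotheses of Theorem~\ref{2condwellposedMLSW}; and Proposition~\ref{2diagimpliqsym} runs the implication the other way (hyperbolic $+$ diagonalizable $\Rightarrow$ symmetrizable). Neither result lets you deduce diagonalizability of $\mathsf{A}({\bf u},\theta)$ from the bare hypothesis $\sigma(\mathsf{A}({\bf u},\theta))\subset\mathbb{R}$. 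Indeed, the paper devotes an entire later argument (Proposition~\ref{2diagA}) to establishing diagonalizability separately, and only under the weak-stratification restriction $\gamma\in\,]1-\delta,1[$; it is not automatic.

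The honest repair is not through symmetrizability but through the \emph{definition}. In the reference the paper cites, hyperbolicity of a first-order system is commonly taken to mean precisely that the symbol has only real eigenvalues (weak hyperbolicity), whereas the uniform-bound condition of Definition~\ref{2defcrithypuseful} characterizes the stronger $\mathcal{L}^2$ notion that additionally forces diagonalizability. Proposition~\ref{2propcrithypuseful} should therefore be read as the standard characterization imported from \cite{Serre1996systemes}, not as a theorem derivable from Definition~\ref{2defcrithypuseful} for generic matrices. Your instinct to flag the dependence explicitly is exactly right; the fix is to record the gap between the two notions rather than to try to manufacture diagonalizability from elsewhere in the section.
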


\begin{proposition}
Let ${\bf u}: \mathbb{R}^2 \mapsto \mathbb{R}^6$ a constant function. If the model {\em (\ref{2systemmultilayer})} is hyperbolic, then the {\em Cauchy} problem, associated with the linear system
\begin{equation}
\frac{\partial {\bf v}}{\partial t} + \mathsf{A}_x({\bf u}) \frac{\partial {\bf v}}{\partial x} + \mathsf{A}_y ({\bf u}) \frac{\partial {\bf v}}{\partial y}=0,
\label{2linearprobl}
\end{equation}
and the initial data ${\bf v^0} \in \mathcal{L}^2(\mathbb{R}^2)^6$, is locally well-posed in $\mathcal{L}^2(\mathbb{R}^2)$ and the unique solution ${\bf v}$ is such that
\begin{equation}
\left\{
\begin{array}{l}
\forall\ T >0,\ \exists\ c_T > 0,\ \sup_{t \in [0,T]} \|{\bf v}(t)\|_{\mathcal{L}^2} \le c_T \|{\bf v^0}\|_{\mathcal{L}^2},\\
{\bf v} \in \mathcal{C}(\mathbb{R}_+;\mathcal{L}^2(\mathbb{R}^2))^6
\end{array}
\right.
\label{2regularityuhyp}
\end{equation}
\label{2defhyp}
\end{proposition}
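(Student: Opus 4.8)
The plan is to establish the energy estimate~(\ref{2regularityuhyp}) via the Fourier transform, exploiting that $\mathsf{A}_x({\bf u})$ and $\mathsf{A}_y({\bf u})$ are constant matrices (since ${\bf u}$ is constant), so that the linear system~(\ref{2linearprobl}) decouples in frequency. First I would apply the spatial Fourier transform $\widehat{\phantom{v}}$ in $X=(x,y)$ to~(\ref{2linearprobl}): writing $\xi=(\xi_1,\xi_2)\in\mathbb{R}^2$ for the dual variable, the system becomes the family of linear ODEs
\begin{equation}
\frac{\partial \widehat{\bf v}}{\partial t}(t,\xi) + i\left(\xi_1 \mathsf{A}_x({\bf u}) + \xi_2 \mathsf{A}_y({\bf u})\right)\widehat{\bf v}(t,\xi) = 0,
\label{2fourierODE}
\end{equation}
whose solution is $\widehat{\bf v}(t,\xi) = \exp\!\left(-it(\xi_1\mathsf{A}_x({\bf u})+\xi_2\mathsf{A}_y({\bf u}))\right)\widehat{\bf v^0}(\xi)$. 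Setting $|\xi|=(\xi_1^2+\xi_2^2)^{1/2}$ and choosing $\theta$ so that $(\xi_1,\xi_2)=|\xi|(\cos\theta,\sin\theta)$, the matrix appearing in the exponential is exactly $|\xi|\,\mathsf{A}({\bf u},\theta)$ by~(\ref{2Autheta}), so $\widehat{\bf v}(t,\xi) = \exp\!\left(-i(t|\xi|)\mathsf{A}({\bf u},\theta)\right)\widehat{\bf v^0}(\xi)$.

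Next, since the model is assumed hyperbolic, Definition~\ref{2defcrithypuseful} furnishes a constant $c>0$, independent of $\theta$ and of the real parameter (here $\tau=t|\xi|$), such that $\|\exp(-i(t|\xi|)\mathsf{A}({\bf u},\theta))\|_{\mathcal{L}^2}\le c$ for every $\xi$ and every $t$. Hence $|\widehat{\bf v}(t,\xi)|\le c\,|\widehat{\bf v^0}(\xi)|$ pointwise in $\xi$, and integrating over $\xi$ and invoking the Plancherel identity yields $\|{\bf v}(t)\|_{\mathcal{L}^2}\le c\,\|{\bf v^0}\|_{\mathcal{L}^2}$ for all $t$; in particular the first line of~(\ref{2regularityuhyp}) holds with $c_T=c$ uniform in $T$. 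Uniqueness follows from the same representation formula applied to a difference of two solutions with the same data (or directly from the invertibility of the evolution), and existence is given by the formula itself, once one checks it defines an element of $\mathcal{L}^2$ for each $t$, which is immediate from the bound.

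Finally, for the continuity statement ${\bf v}\in\mathcal{C}(\mathbb{R}_+;\mathcal{L}^2(\mathbb{R}^2))^6$, I would argue by density and uniform boundedness: for ${\bf v^0}$ in a dense subspace (say Schwartz functions, or functions with $\widehat{\bf v^0}$ compactly supported), the map $t\mapsto\widehat{\bf v}(t,\cdot)$ is continuous into $\mathcal{L}^2$ by dominated convergence, since the integrand converges pointwise as $t\to t_0$ and is dominated by $c^2|\widehat{\bf v^0}(\xi)|^2$; then the uniform operator bound $\|\exp(-it|\xi|\mathsf{A}({\bf u},\theta))\|_{\mathcal{L}^2}\le c$ propagates continuity to general ${\bf v^0}\in\mathcal{L}^2$ by a standard $3\varepsilon$ argument. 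The main obstacle—and the only genuinely nontrivial input—is the uniformity of the constant $c$ over all directions $\theta$ and all values of the scaled time $t|\xi|$; but this is precisely what Definition~\ref{2defcrithypuseful} encodes, so the argument is essentially a repackaging of the hyperbolicity hypothesis through Plancherel, together with the routine density argument for time continuity.
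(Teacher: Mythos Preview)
Your argument is correct and is the standard Fourier--Plancherel proof for constant-coefficient hyperbolic systems: the solution formula in frequency, the uniform bound on $\exp(-i\tau\mathsf{A}({\bf u},\theta))$ from Definition~\ref{2defcrithypuseful}, Plancherel for the $\mathcal{L}^2$ estimate, and a density/dominated-convergence step for continuity in time. The paper itself does not supply a proof of this proposition but merely refers the reader to \cite{Serre1996systemes}; your write-up is essentially the argument one finds there.
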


\noindent
{\em Remark:} More details about the hyperbolicity in \cite{Serre1996systemes}.

\subsection{Symmetrizability}
In order to prove the local well-posedeness of the model (\ref{2systemmultilayer}), in $\mathcal{H}^s(\mathbb{R}^2)$, we give below a useful criteria.

\begin{definition}[{\rm Symmetrizability}]
Let ${\bf u} \in \mathcal{H}^s(\mathbb{R}^2)^6$. If there exists a $\mathcal{C}^{\infty}$ mapping $\mathsf{S}: \mathcal{H}^s(\mathbb{R}^2)^6 \times [0,2\pi] \rightarrow \mathcal{M}_{6}(\mathbb{R})$ such that for all $\theta \in [0,2\pi]$,
\begin{romannum}
\item $\mathsf{S}({\bf u},\theta)$ is symmetric,
\item $\mathsf{S}({\bf u},\theta)$ is positive-definite,
\item $\mathsf{S}({\bf u},\theta) \mathsf{A}({\bf u},\theta)$ is symmetric,
\end{romannum}
then, the model {\em (\ref{2systemmultilayer})} is said symmetrizable and the mapping $\mathsf{S}$ is called a symbolic-symmetrizer.
\label{2defsymm}
\end{definition}

\begin{proposition}
Let ${\bf u^0} \in \mathcal{H}^s(\mathbb{R}^2)^6$. If the model {\em (\ref{2systemmultilayer})} is symmetrizable, then the {\em Cauchy} problem, associated with {\em (\ref{2systemmultilayer})} and initial data ${\bf u^0}$, is locally well-posed in $\mathcal{H}^s(\mathbb{R}^2)$, with $s>2$. Furthermore, there exists $T>0$ such that the unique solution ${\bf u}$ verifies
\begin{equation}
\left\{
\begin{array}{l}
{\bf u} \in \mathcal{C}^1([0,T] \times \mathbb{R}^2)^6,\\
{\bf u} \in \mathcal{C}([0,T];\mathcal{H}^s(\mathbb{R}^2))^6 \cap \mathcal{C}^1([0,T];\mathcal{H}^{s-1}(\mathbb{R}^2))^6.
\end{array}
\right.
\label{2strongsol}
\end{equation}
\label{2propsymm}
\end{proposition}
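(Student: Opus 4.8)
The plan is to reduce the quasilinear Cauchy problem for (\ref{2systemmultilayer}) to the abstract theory of symmetric hyperbolic (Friedrichs) systems, which is standard once a symbolic-symmetrizer is available. Recall that the hypotheses of Definition~\ref{2defsymm} give, for each $\theta$, a smooth positive-definite symmetric matrix $\mathsf{S}({\bf u},\theta)$ rendering $\mathsf{S}({\bf u},\theta)\mathsf{A}({\bf u},\theta)$ symmetric. The key first step is to produce from this family a single \emph{Friedrichs symmetrizer} $\mathsf{S}_0({\bf u})$, smooth in ${\bf u}$, symmetric and positive-definite, such that both $\mathsf{S}_0({\bf u})\mathsf{A}_x({\bf u})$ and $\mathsf{S}_0({\bf u})\mathsf{A}_y({\bf u})$ are symmetric. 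One obtains this by evaluating the symbolic-symmetrizer at the two axis directions and using the rotational-invariance identity (\ref{2rotinv}): taking $\theta=0$ gives $\mathsf{S}({\bf u},0)\mathsf{A}_x({\bf u})$ symmetric, and one checks (using $\mathsf{P}(\theta)^{-1}={}^{\top}\mathsf{P}(\theta)$ and the structure of $\mathsf{S}$) that a suitable choice, e.g. $\mathsf{S}_0({\bf u}):=\mathsf{S}({\bf u},0)$, simultaneously symmetrizes $\mathsf{A}_y$; if not literally, one averages or conjugates the $\theta=0$ and $\theta=\pi/2$ symmetrizers so that both $\mathsf{S}_0\mathsf{A}_x$ and $\mathsf{S}_0\mathsf{A}_y$ become symmetric. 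This is the step I expect to require the most care, since it is where the full directional information in Definition~\ref{2defsymm}(iii) must be converted into the two-matrix Friedrichs condition.

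Once $\mathsf{S}_0$ is in hand, multiply (\ref{2systemmultilayer}) on the left by $\mathsf{S}_0({\bf u})$ to obtain the equivalent symmetric quasilinear system
\begin{equation}
\mathsf{S}_0({\bf u})\,\partial_t {\bf u} + \mathsf{S}_0({\bf u})\mathsf{A}_x({\bf u})\,\partial_x {\bf u} + \mathsf{S}_0({\bf u})\mathsf{A}_y({\bf u})\,\partial_y {\bf u} + \mathsf{S}_0({\bf u}){\bf b}({\bf u}) = 0,
\label{2symmsystem}
\end{equation}
in which the coefficient of $\partial_t$ is symmetric positive-definite and the coefficients of $\partial_x,\partial_y$ are symmetric, with all coefficients smooth functions of ${\bf u}$ and the zeroth-order term ${\bf b}({\bf u})$ smooth. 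This places the problem exactly in the framework of quasilinear symmetric hyperbolic systems on $\mathbb{R}^2$.

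I would then invoke the classical local existence and uniqueness theory for such systems (Kato, Majda, Taylor — consistent with the reference to \cite{Serre1996systemes}): for ${\bf u}^0\in\mathcal{H}^s(\mathbb{R}^2)^6$ with $s>2$ (so that $\mathcal{H}^s\hookrightarrow\mathcal{C}^1$ by Sobolev embedding in dimension two), there exist $T>0$ and a unique solution ${\bf u}\in\mathcal{C}([0,T];\mathcal{H}^s(\mathbb{R}^2))^6\cap\mathcal{C}^1([0,T];\mathcal{H}^{s-1}(\mathbb{R}^2))^6$. The argument is the usual one: mollify or iterate to build approximate solutions, derive $\mathcal{H}^s$ energy estimates by applying $\Lambda^s$, commuting it past the symmetric coefficients (Kato--Ponce commutator estimates) and using the positive-definiteness of $\mathsf{S}_0$ together with Gr\"onwall to get a uniform time of existence, then pass to the limit and prove uniqueness and continuous dependence by an $\mathcal{L}^2$ energy estimate on the difference of two solutions. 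Finally, the regularity ${\bf u}\in\mathcal{C}^1([0,T]\times\mathbb{R}^2)^6$ follows from ${\bf u}\in\mathcal{C}([0,T];\mathcal{H}^s)\cap\mathcal{C}^1([0,T];\mathcal{H}^{s-1})$ and the embedding $\mathcal{H}^{s-1}(\mathbb{R}^2)\hookrightarrow\mathcal{C}^0(\mathbb{R}^2)$ valid for $s-1>1$, i.e. $s>2$, which yields (\ref{2strongsol}). The only genuinely model-specific input is the construction of $\mathsf{S}_0$ from $\mathsf{S}$ via rotational invariance; everything after (\ref{2symmsystem}) is a citation to the general theory.
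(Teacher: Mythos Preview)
The paper does not give a proof of this proposition at all: it simply remarks that ``The proof of the last proposition is in \cite{benzoni2007multi}, for instance.'' So there is nothing to compare your argument with on the paper's side beyond the citation.

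Your sketch, however, contains a genuine gap at exactly the place you flagged as delicate. The hypothesis of Definition~\ref{2defsymm} gives only a \emph{symbolic} symmetrizer $\mathsf{S}({\bf u},\theta)$, i.e.\ a direction-dependent positive-definite matrix such that $\mathsf{S}({\bf u},\theta)\mathsf{A}({\bf u},\theta)$ is symmetric. From this one cannot in general manufacture a single Friedrichs symmetrizer $\mathsf{S}_0({\bf u})$ making both $\mathsf{S}_0\mathsf{A}_x$ and $\mathsf{S}_0\mathsf{A}_y$ symmetric. Your fallback suggestion of ``averaging or conjugating the $\theta=0$ and $\theta=\pi/2$ symmetrizers'' does not work: the property ``$\mathsf{S}\mathsf{A}$ is symmetric'' is not preserved under convex combinations of $\mathsf{S}$, and conjugation by $\mathsf{P}(\theta)$ changes the argument of $\mathsf{S}_x$ as well, so there is no reason the resulting matrix symmetrizes $\mathsf{A}_x({\bf u})$ and $\mathsf{A}_y({\bf u})$ for the \emph{same} ${\bf u}$. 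The existence of a Friedrichs symmetrizer is a strictly stronger statement than the existence of a symbolic one.

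The correct route, and the content of the cited reference \cite{benzoni2007multi}, is to run the $\mathcal{H}^s$ energy estimates directly with the symbolic symmetrizer, working on the Fourier side (or, for the quasilinear problem, via paradifferential calculus): one uses $\mathsf{S}({\bf u},\xi/|\xi|)$ as the symbol of a zeroth-order operator and derives a G{\aa}rding-type lower bound from its uniform positive-definiteness. Everything you wrote after equation~(\ref{2symmsystem}) --- commutator estimates, Gr\"onwall, Sobolev embedding for $s>2$ --- is then correct in spirit, but the reduction to (\ref{2symmsystem}) itself is not available under the stated hypotheses. If you want to salvage the Friedrichs approach for this particular model, you must first \emph{exhibit} a $\theta$-independent symmetrizer (as the paper in fact does later, in (\ref{2Sx})), but then you are proving Theorem~\ref{2condwellposedMLSW} rather than the abstract Proposition~\ref{2propsymm}.
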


\noindent
{\em Remark:} The proof of the last proposition is in \cite{benzoni2007multi}, for instance.

\noindent
In this paper, the model (\ref{2massconservation}--\ref{2momentumconservation}) is expressed with the variables $(h_i,u_i)$ with $i \in \{1,2\}$. However, we could have worked with the unknowns $h_i$ and $q_i:=h_i u_i$, as it is well-known this quantities are conservative in the one-dimensional case. However, in the particular case of the two-layer shallow water model with free surface, it is not true. Indeed, the one-dimensional model expressed in $(h_i,u_i)$ is conservative and the one in $(h_i,q_i)$ is not.

\noindent
As it was noticed in \cite{Serre1996systemes}, if the model is conservative, there exists a natural symmetrizer: the hessian of the energy of the model. This energy is defined, modulo a constant, by:
\begin{equation}
e_1:=\frac{1}{2}\gamma h_1 \left(u_1^2+ g (h_1+2 h_2)\right)+\frac{1}{2} h_2 \left( u_2^2+g h_2\right).
\label{2energy}
\end{equation}
As the model (\ref{2massconservation}--\ref{2momentumconservation}), in one dimension and variables $(h_i,u_i)$, is conservative, it is straightforward the hessian of $e_1$ is a symmetrizer of the one-dimensional model. However, it is not anymore a symmetrizer with the non-conservative variables $(h_i,h_i u_i)$. This is why the analysis, in this paper, is performed with variables $(h_i,u_i)$.

\noindent
{\em Remarks:} 1) In all this paper, the parameter $s \in \mathbb{R}$ is assumed such that
\begin{equation}
s > 1+ \frac{d}{2},
\label{1params}
\end{equation}
where $d:=2$ is the dimension. 2) The criterion (\ref{2spectrumAutheta}) is a necessary and sufficient condition of hyperbolicity, whereas the symmetrizability is only a sufficient condition of local well-posedness in $\mathcal{H}^s(\mathbb{R}^2)$.

\subsection{Connections between hyperbolicity and symmetrizability}
In this subsection, we do not formulate all the connections between the hyperbolicity and the local well-posedness in $\mathcal{H}^s(\mathbb{R}^2)$ but only the useful ones for this paper.

\begin{proposition}
If the model {\em (\ref{2systemmultilayer})} is symmetrizable, then it is hyperbolic.
\label{2symimpliqhyp}
\end{proposition}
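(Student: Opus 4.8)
The plan is to show that symmetrizability gives, for each fixed $\theta$, a uniform bound on $\|\exp(-i\tau\mathsf{A}({\bf u},\theta))\|_{\mathcal{L}^2}$ in $\tau$, which is exactly the condition (\ref{2expAutheta}) of Definition \ref{2defcrithypuseful}; alternatively one can pass through Proposition \ref{2propcrithypuseful} and only verify that the spectrum of $\mathsf{A}({\bf u},\theta)$ is real. I will follow the spectral route, since it is the cleanest, and then remark that the first route also works. Fix ${\bf u}$ and $\theta \in [0,2\pi]$, and let $\mathsf{S} := \mathsf{S}({\bf u},\theta)$ and $\mathsf{A} := \mathsf{A}({\bf u},\theta)$. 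By Definition \ref{2defsymm}, $\mathsf{S}$ is symmetric positive-definite and $\mathsf{S}\mathsf{A}$ is symmetric.

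The key step is the classical observation that $\mathsf{A}$ is then similar to a symmetric matrix. Since $\mathsf{S}$ is symmetric positive-definite, it admits a symmetric positive-definite square root $\mathsf{S}^{1/2}$, with inverse $\mathsf{S}^{-1/2}$. Set $\mathsf{B} := \mathsf{S}^{1/2}\mathsf{A}\mathsf{S}^{-1/2}$. Then $\mathsf{B} = \mathsf{S}^{-1/2}(\mathsf{S}\mathsf{A})\mathsf{S}^{-1/2}$, and since $\mathsf{S}\mathsf{A}$ is symmetric and $\mathsf{S}^{-1/2}$ is symmetric, $\mathsf{B}$ is symmetric; hence $\mathsf{B}$ is diagonalizable with real eigenvalues. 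As $\mathsf{A}$ and $\mathsf{B}$ are similar, $\sigma(\mathsf{A}) = \sigma(\mathsf{B}) \subset \mathbb{R}$. This holds for every $\theta \in [0,2\pi]$, so criterion (\ref{2spectrumAutheta}) is satisfied, and Proposition \ref{2propcrithypuseful} yields that the model (\ref{2systemmultilayer}) is hyperbolic.

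For completeness I would add the direct estimate matching Definition \ref{2defcrithypuseful}: with $\mathsf{B}$ as above and real symmetric, $\exp(-i\tau\mathsf{B})$ is unitary for every $\tau \in \mathbb{R}$, so $\exp(-i\tau\mathsf{A}) = \mathsf{S}^{-1/2}\exp(-i\tau\mathsf{B})\mathsf{S}^{1/2}$ gives $\|\exp(-i\tau\mathsf{A})\|_{\mathcal{L}^2} \le \|\mathsf{S}^{-1/2}\|_{\mathcal{L}^2}\,\|\mathsf{S}^{1/2}\|_{\mathcal{L}^2} =: c$, uniformly in $\tau$; since ${\bf u}$ is fixed this constant depends only on $\mathsf{S}({\bf u},\theta)$, and one takes the supremum over the compact interval $[0,2\pi]$ using continuity of $\mathsf{S}$ to get a single $c$ valid for all $\theta$. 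I do not anticipate a genuine obstacle here: the only point requiring a little care is the existence and symmetry of $\mathsf{S}^{1/2}$ and the fact that $\mathsf{S}^{-1/2}\mathsf{M}\mathsf{S}^{-1/2}$ is symmetric whenever $\mathsf{M}$ is — both standard facts from linear algebra — together with noting that the constant $c$ can be chosen independently of $\theta$ by compactness of $[0,2\pi]$ and smoothness of $\mathsf{S}$.
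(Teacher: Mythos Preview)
Your argument is correct and is precisely the classical proof: conjugating by $\mathsf{S}^{1/2}$ turns $\mathsf{A}({\bf u},\theta)$ into the symmetric matrix $\mathsf{S}^{-1/2}(\mathsf{S}\mathsf{A})\mathsf{S}^{-1/2}$, whence real spectrum and the uniform exponential bound. The paper does not actually prove this proposition --- it simply refers the reader to \cite{benzoni2007multi} and \cite{Serre1996systemes} --- and the argument you wrote is exactly the one found in those references, so there is nothing to compare.
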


\noindent
{\em Remark:} See \cite{benzoni2007multi} or \cite{Serre1996systemes} for more details.

\begin{proposition}
Let ${\bf u^0} \in \mathcal{H}^s(\mathbb{R}^2)^6$ such that the model is hyperbolic and
\begin{equation}
\forall (X,\theta) \in \mathbb{R}^2 \times [0,2\pi],\ \mathrm{the}\ \mathrm{matrix}\ \mathsf{A}({\bf u^0}(X),\theta)\ \mathrm{is}\ \mathrm{diagonalizable}.
\label{2diagoAhyp}
\end{equation}
Then, the model {\em (\ref{2systemmultilayer})} is symmetrizable and the unique solution verifies the conditions {\em (\ref{2strongsol})}.
\label{2diagimpliqsym}
\end{proposition}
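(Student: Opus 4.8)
The plan is to construct the symbolic-symmetrizer $\mathsf{S}({\bf u},\theta)$ explicitly by averaging spectral projectors, exploiting the rotational invariance \eqref{2rotinv} so that it suffices to treat $\mathsf{A}_x$. First I would fix $({\bf u},\theta)$ and set ${\bf w}:=\mathsf{P}(\theta){\bf u}$, so that $\mathsf{A}({\bf u},\theta)=\mathsf{P}(\theta)^{-1}\mathsf{A}_x({\bf w})\mathsf{P}(\theta)$. By hyperbolicity together with hypothesis \eqref{2diagoAhyp}, the matrix $\mathsf{A}_x({\bf w})$ has only real eigenvalues and is diagonalizable, hence there is a basis of real eigenvectors; write $\mathsf{A}_x({\bf w})=\mathsf{R}({\bf w})\,\mathsf{D}({\bf w})\,\mathsf{R}({\bf w})^{-1}$ with $\mathsf{D}$ real diagonal. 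The natural candidate symmetrizer is
\begin{equation}
\mathsf{S}_0({\bf w}):=\left(\mathsf{R}({\bf w})^{-1}\right)^{\!\top}\mathsf{R}({\bf w})^{-1},
\label{2S0candidate}
\end{equation}
which is symmetric positive-definite and satisfies $\mathsf{S}_0\mathsf{A}_x=(\mathsf{R}^{-1})^\top\mathsf{D}\,\mathsf{R}^{-1}$, a symmetric matrix since $\mathsf{D}$ is diagonal. Then I would transport it back by setting
\begin{equation}
\mathsf{S}({\bf u},\theta):=\mathsf{P}(\theta)^{\top}\,\mathsf{S}_0\!\left(\mathsf{P}(\theta){\bf u}\right)\mathsf{P}(\theta),
\label{2Sfull}
\end{equation}
and check that $\mathsf{S}({\bf u},\theta)$ is symmetric, positive-definite (congruence preserves this), and that $\mathsf{S}({\bf u},\theta)\mathsf{A}({\bf u},\theta)=\mathsf{P}(\theta)^{\top}\mathsf{S}_0\mathsf{A}_x(\mathsf{P}(\theta){\bf u})\mathsf{P}(\theta)$ is symmetric, using $\mathsf{P}(\theta)^{-1}=\mathsf{P}(\theta)^{\top}$.

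The delicate point — and the main obstacle — is the $\mathcal{C}^\infty$ regularity of $\mathsf{S}$ in $({\bf u},\theta)$ required by Definition \ref{2defsymm}. The eigenvectors, and hence $\mathsf{R}$, cannot in general be chosen smoothly when eigenvalues collide, so the naive formula \eqref{2S0candidate} is not obviously smooth. To get around this I would instead build $\mathsf{S}_0$ from the spectral projectors: writing $\lambda_1({\bf w})<\dots<\lambda_p({\bf w})$ for the \emph{distinct} eigenvalues and $\Pi_k({\bf w})$ for the associated eigenprojector (given by a holomorphic functional calculus / Riesz integral $\Pi_k=\frac{1}{2\pi i}\oint (\zeta-\mathsf{A}_x)^{-1}\,d\zeta$ around $\lambda_k$), one sets
\begin{equation}
\mathsf{S}_0({\bf w}):=\sum_{k=1}^{p}\Pi_k({\bf w})^{\top}\Pi_k({\bf w}).
\label{2S0proj}
\end{equation}
One verifies directly that $\mathsf{S}_0\mathsf{A}_x=\sum_k \lambda_k\,\Pi_k^\top\Pi_k$ is symmetric (since $\mathsf{A}_x\Pi_k=\lambda_k\Pi_k$ and $\Pi_k=\Pi_k^2$), that $\mathsf{S}_0$ is symmetric, and that $\mathsf{S}_0$ is positive-definite because the $\Pi_k$ have ranges summing to all of $\mathbb{R}^6$ (as $\mathsf{A}_x$ is diagonalizable) with trivial pairwise intersections. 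The projectors are locally real-analytic in ${\bf w}$ as long as the dimension of each eigenspace and the grouping is locally constant; since hyperbolicity plus \eqref{2diagoAhyp} hold on all of $\mathbb{R}^2$, on a neighbourhood of ${\bf u^0}(\mathbb{R}^2)$ one can work with a fixed partition of the spectrum into groups that stay separated, and the sum \eqref{2S0proj} is insensitive to how eigenvalues within a persistent group split or merge — this is the standard trick that restores smoothness. Composing with the (trivially smooth) map $\theta\mapsto\mathsf{P}(\theta)$ in \eqref{2Sfull} then yields a $\mathcal{C}^\infty$ symbolic-symmetrizer.

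Once $\mathsf{S}$ is constructed, the model \eqref{2systemmultilayer} is symmetrizable in the sense of Definition \ref{2defsymm}, and the conclusion — local well-posedness in $\mathcal{H}^s(\mathbb{R}^2)$ with $s>2$ and the regularity \eqref{2strongsol} for the unique solution — follows immediately from Proposition \ref{2propsymm}. I would close the proof by remarking that the only structural facts used are the reality of the spectrum (hyperbolicity) and diagonalizability, both assumed, plus the rotational invariance \eqref{2rotinv} already established, so no computation specific to the matrices $\mathsf{A}_x,\mathsf{A}_y$ in \eqref{2Axy} is needed here.
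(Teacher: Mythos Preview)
Your proposal is correct and follows essentially the same route as the paper: the paper's proof also constructs the symmetrizer as $\mathsf{S_1}({\bf u^0},\theta):=\sum_{\mu\in\sigma(\mathsf{A}({\bf u^0},\theta))}{}^{\top}\mathsf{P}^{\mu}({\bf u^0},\theta)\,\mathsf{P}^{\mu}({\bf u^0},\theta)$ with $\mathsf{P}^{\mu}$ the eigenprojectors, and then invokes Proposition~\ref{2propsymm}. Your version is in fact more careful than the paper's on the $\mathcal{C}^\infty$ regularity of the projectors (via the Riesz integral and spectral grouping), a point the paper leaves implicit; the detour through the rotational invariance \eqref{2rotinv} is optional, since the paper applies the projector construction directly to $\mathsf{A}({\bf u^0},\theta)$.
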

\begin{proof}
Let $\mu \in \sigma(\mathsf{A}({\bf u^0},\theta))$, we denote $\mathsf{P}^{\mu}({\bf u^0},\theta)$ the projection onto the $\mu$-eigenspace of $\mathsf{A}({\bf u^0},\theta)$. One can construct a symbolic symmetrizer:
\begin{equation}
\mathsf{S_1}({\bf u^0},\theta):= \sum_{\mu \in \sigma(\mathsf{A}({\bf u^0},\theta))} {}^{\top} \mathsf{P}^{\mu}({\bf u^0},\theta) \mathsf{P}^{\mu}({\bf u^0},\theta).
\label{2symprojdef}
\end{equation}
Then, $\mathsf{S_1}({\bf u^0},\theta)$ verifies conditions of the proposition \ref{2propsymm} because $\mathsf{A}({\bf u^0},\theta)$ is diagonalizable and $\sigma(\mathsf{A}({\bf u^0},\theta)) \subset \mathbb{R}$. Then, proposition \ref{2propsymm} implies the local well-posedness of the model (\ref{2systemmultilayer}), in $\mathcal{H}^s(\mathbb{R}^2)$, and there exists $T>0$ such that conditions (\ref{2strongsol}) are verified.\end{proof}

To conclude, the analysis of the eigenstrusture of $\mathsf{A}({\bf u},\theta)$ is a crucial point, in order to provide its diagonalizability. Moreover, it provides also the characterization of the {\em Riemann} invariants (see \cite{smoller1983shock}), which is an important benefit for numerical resolution.

\noindent
{\em Remark:} This proposition was proved in \cite{taylor1996partial}, in the particular case of a strictly hyperbolic model ({\em i.e.} all the eigenvalues are real and distinct).

\subsection{A criterion of symmetrizability of the two-layer shallow water model} Acconrding to the proposition \ref{2symimpliqhyp}, the symmetrizability implies the hyperbolicity. Then, we give a rough criterion of symmetrizability to insure the local well-posedness in $\mathcal{H}^s(\mathbb{R}^2)$ and $\mathcal{L}^2(\mathbb{R}^2)$.

\begin{theorem}
Let $\gamma \in ]0,1[$ and ${\bf u^0} \in \mathcal{H}^s(\mathbb{R}^2)^6$ such that
\begin{equation}
\left\{
\begin{array}{l}
\inf_{X \in \mathbb{R}^2} h_1^0(X) > 0,\ \inf_{X \in \mathbb{R}^2} h_2^0(X) > 0,\\
\inf_{X \in \mathbb{R}^2} (1-\gamma) g h_2^0(X) - (u_2^0(X)-u_1^0(X))^2-(v_2^0(X)-v_1^0(X))^2 > 0.
\end{array}
\right.
\label{2condwellposed}
\end{equation}
Then, the {\em Cauchy} problem, associated with {\em (\ref{2systemmultilayer})} and the initial data ${\bf u^0}$, is hyperbolic, locally well-posed in $\mathcal{H}^s(\mathbb{R}^2)$ and the unique solution verifies conditions {\em (\ref{2strongsol})}.
\label{2condwellposedMLSW}
\end{theorem}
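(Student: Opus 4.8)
The plan is to construct an explicit symbolic symmetrizer $\mathsf{S}(\mathbf{u},\theta)$ for the matrix $\mathsf{A}(\mathbf{u},\theta)$ under the hypothesis (\ref{2condwellposed}), and then invoke Proposition \ref{2propsymm} together with Proposition \ref{2symimpliqhyp}. By the rotational invariance relation (\ref{2rotinv}), $\mathsf{A}(\mathbf{u},\theta)=\mathsf{P}(\theta)^{-1}\mathsf{A}_x(\mathsf{P}(\theta)\mathbf{u})\mathsf{P}(\theta)$ with $\mathsf{P}(\theta)^{-1}={}^{\top}\mathsf{P}(\theta)$; hence if $\mathsf{S}_0(\mathbf{w})$ is a symmetrizer of $\mathsf{A}_x(\mathbf{w})$ for every admissible state $\mathbf{w}$, then $\mathsf{S}(\mathbf{u},\theta):={}^{\top}\mathsf{P}(\theta)\,\mathsf{S}_0(\mathsf{P}(\theta)\mathbf{u})\,\mathsf{P}(\theta)$ is symmetric, positive-definite, and $\mathsf{S}(\mathbf{u},\theta)\mathsf{A}(\mathbf{u},\theta)={}^{\top}\mathsf{P}(\theta)\,\mathsf{S}_0(\mathsf{P}(\theta)\mathbf{u})\mathsf{A}_x(\mathsf{P}(\theta)\mathbf{u})\,\mathsf{P}(\theta)$ is symmetric. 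Moreover the hypothesis (\ref{2condwellposed}) is invariant under $\mathbf{u}\mapsto\mathsf{P}(\theta)\mathbf{u}$, since $\mathsf{P}(\theta)$ acts on $(h_1,h_2)$ as the identity and rotates $(u_i,v_i)$ rigidly, so it preserves $h_i^0$ and the quantity $(u_2^0-u_1^0)^2+(v_2^0-v_1^0)^2$. So it suffices to symmetrize $\mathsf{A}_x$.

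For $\mathsf{A}_x$, the natural candidate is the Hessian of the energy $e_1$ in (\ref{2energy}), which is a genuine symmetrizer of the one-dimensional conservative system in the variables $(h_i,u_i)$, as the paper already notes. Concretely I would take
\begin{equation}
\mathsf{S}_0(\mathbf{w}) := \mathrm{diag}\!\left(\mathsf{B}(\mathbf{w}),\ \gamma h_1 \mathsf{I}_2,\ \text{suitably ordered}\right),
\nonumber
\end{equation}
where, writing the unknowns in the order $(h_1,h_2,u_1,u_2,v_1,v_2)$, the $(h_1,h_2)$–block and the velocity blocks of $\nabla^2 e_1$ are assembled; a short computation gives the $(h_1,h_2)$ part as $g\begin{pmatrix}\gamma & \gamma\\ \gamma & 1\end{pmatrix}$ (up to the diagonal velocity contributions), the $(u_1,u_2)$ part as $\mathrm{diag}(\gamma h_1,h_2)$, and likewise for $(v_1,v_2)$. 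One then checks directly that $\mathsf{S}_0(\mathbf{w})\mathsf{A}_x(\mathbf{w})$ is symmetric — this is exactly the statement that $\nabla^2 e_1$ symmetrizes the one-dimensional flux, so it is essentially a bookkeeping verification using the explicit form of $\mathsf{A}_x$ in (\ref{2Axy}). The two $v$-equations decouple (they only transport $v_i$ at speed $u_i$), so that block is trivially symmetric with any positive diagonal weight, and the real content is the $4\times4$ $(h_1,h_2,u_1,u_2)$ block.

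The crux is positive-definiteness of $\mathsf{S}_0(\mathbf{w})$ on the admissible set, and this is where the hypothesis (\ref{2condwellposed}) enters. Since $\mathsf{P}(\theta)$ has already reduced us to $\mathsf{A}_x$ with the state $\mathbf{w}=\mathsf{P}(\theta)\mathbf{u}^0$, I expect the relevant positivity condition on $\mathsf{B}(\mathbf{w})$ to be precisely $(1-\gamma)gh_2 - (\text{relative horizontal speed})^2>0$ together with $h_1,h_2>0$ and $\gamma\in(0,1)$; I would diagonalize the $(h_1,h_2,u_1,u_2)$ block (e.g. via Sylvester's criterion on its leading principal minors, or by completing the square to isolate the relative-velocity term) and show its minors are all positive exactly under (\ref{2condwellposed}). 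Uniformity in $X$ follows from the $\inf$'s being strictly positive, which in turn gives a uniform spectral gap for $\mathsf{S}_0$, hence a $\mathcal{C}^\infty$ positive-definite symbolic symmetrizer on a neighborhood of the initial data. The main obstacle is thus the algebra of this $4\times4$ minor computation — making sure the off-diagonal $u$–$u$ coupling (which in the symmetrized form comes from the $\gamma g$ entry versus $g$ entry in rows 3–4 of $\mathsf{A}_x$) produces exactly the $(1-\gamma)$ factor and the relative-velocity square, and not some slightly different combination. Once positive-definiteness is established, smoothness of $\mathsf{S}$ in $(\mathbf{u},\theta)$ is immediate from the explicit formulas, Proposition \ref{2propsymm} yields local well-posedness in $\mathcal{H}^s$ with the regularity (\ref{2strongsol}), and Proposition \ref{2symimpliqhyp} yields hyperbolicity, completing the proof.
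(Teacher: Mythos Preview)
Your overall strategy matches the paper's: reduce to $\mathsf{A}_x$ via the rotational invariance, take a symmetrizer coming from the energy Hessian, and check positive-definiteness by Sylvester's criterion. The gap is in your expectation that the plain Hessian $\nabla^2 e_1$ is positive-definite precisely under (\ref{2condwellposed}). It is not. The $4\times4$ block in the variables $(h_1,h_2,u_1,u_2)$ is
\[
\nabla^2 e_1=\begin{pmatrix}
\gamma g & \gamma g & \gamma u_1 & 0\\
\gamma g & g & 0 & u_2\\
\gamma u_1 & 0 & \gamma h_1 & 0\\
0 & u_2 & 0 & h_2
\end{pmatrix},
\]
and its third leading minor is $m_3=\gamma^2 g\bigl[(1-\gamma)gh_1-u_1^{\,2}\bigr]$, which involves the \emph{absolute} velocity $u_1$. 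The energy $e_1$ is not Galilean-invariant, so the positivity of its Hessian is frame-dependent and cannot coincide with the Galilean-invariant condition $(1-\gamma)gh_2>(u_2-u_1)^2+(v_2-v_1)^2$. Your minor computation will therefore not ``produce exactly the $(1-\gamma)$ factor and the relative-velocity square'' as you anticipate.

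The paper resolves this by introducing a free shift parameter $u_0$ in the symmetrizer --- equivalently, taking the Hessian of the energy in a moving frame --- and then setting $u_0=u_1$ pointwise. This kills the $(h_1,u_1)$ off-diagonal entry and leaves only $u_2-u_1$ in the matrix (see (\ref{2Sx})); Sylvester's criterion on $\mathsf{S_x}(\mathsf{P}(\theta)\mathbf{u}^0)$ then reduces exactly to $(1-\gamma)gh_2^0>\bigl(\cos\theta\,(u_2^0-u_1^0)+\sin\theta\,(v_2^0-v_1^0)\bigr)^2$, and maximizing over $\theta$ yields (\ref{2condwellposed}). This Galilean shift is the missing idea in your proposal; once you insert it, the rest of your argument (rotational conjugation of the symmetrizer, smoothness, invocation of Propositions~\ref{2propsymm} and~\ref{2symimpliqhyp}) goes through exactly as you describe.
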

\begin{proof}
First, we prove the next lemma

\begin{lemma}
Let $\mathcal{S}$ be an open subset of $\mathcal{H}^s(\mathbb{R}^2)^6$ and $\mathsf{S_x}({\bf u})$ be a symmetric matrix such that $\mathsf{S_x}({\bf u}) \mathsf{A}_x({\bf u})$ is symmetric. If there exists ${\bf u^0} \in \mathcal{S}$ such that
\begin{equation}
\forall \theta \in [0,2\pi],\ \mathsf{S_x}(\mathsf{P}(\theta){\bf u^0})>0,
\label{2positivdefSp}
\end{equation}
then the system {\em (\ref{2systemmultilayer})}, with initial data ${\bf u^0}$, is locally well-posed in $\mathcal{H}^s(\mathbb{R}^2)$, hyperbolic and the unique solution verifies {\em (\ref{2strongsol})}.
\label{2wellposedrotinv}
\end{lemma}
\begin{proof}
Considering ${\bf u^0} \in \mathcal{S}$ such that for all $\theta \in [0,2\pi]$, $\mathsf{S_x}(\mathsf{P}(\theta){\bf u})$ is positive-definite, it is clear that $\mathsf{S}: ({\bf u},\theta) \mapsto \mathsf{P}(\theta)^{-1} \mathsf{S_x}(\mathsf{P}(\theta) {\bf u}) \mathsf{P}(\theta)$ verifies assumptions of proposition \ref{2propsymm}, with ${\bf u^0} \in \mathcal{H}^s(\mathbb{R}^2)^6$. Consequently, propositions \ref{2propsymm} and \ref{2symimpliqhyp} are verified.
\end{proof}

Then, to verify the lemma \ref{2wellposedrotinv}, we use a perturbation of the hessian of $e_1$ (which is a symmetrizer of the one-dimensional model, as we noticed it before):
\begin{equation}
\mathsf{S_x}({\bf u}) :=  \left[
\begin{array}{cccccc}
g \gamma  & g \gamma  & \gamma (u_1-u_0)  & 0 & 0 & 0 \\
g \gamma  & g & 0 & u_2-u_0 & 0 & 0 \\
 \gamma (u_1-u_0)  & 0 & \gamma h_1  & 0 & 0 & 0 \\
 0 & u_2-u_0 & 0 & h_2 & 0 & 0 \\
 0 & 0 & 0 & 0 & \gamma h_1  & 0 \\
 0 & 0 & 0 & 0 & 0 & h_2
\end{array}
\right],
\label{2Sx}
\end{equation}
where $u_0 \in \mathbb{R}$ is a parameter, which will be chosen in order to simplify the calculus. Then, it is clear that $\mathsf{S_x}({\bf u},u_0)$ and $\mathsf{S_x}({\bf u},u_0) \mathsf{A}_x({\bf u})$ are symmetric for all $({\bf u},u_0) \in \mathbb{R}^6 \times \mathbb{R}$. From now on $u_0$ is set as $u_0:=u_1$. Then, using the leading principal minors characterization of a positive-definite matrix (also known as {\em Sylvester}'s criterion), for $(X,\theta) \in \mathbb{R}^2 \times [0,2\pi]$, $\mathsf{S_x}(\mathsf{P}(\theta) {\bf u^0}(X)) > 0$ if and only if
\begin{equation}
\left\{
\begin{array}{l}
\gamma \in ]0,1[,\\
h_1^0(X) > 0,\ h_2^0(X) > 0,\\
(1-\gamma) g h_2^0(X) > \left(\cos(\theta)(u_2^0(X)-u_1^0(X))+\sin(\theta)(v_2^0(X)-v_1^0(X))\right)^2.
\end{array}
\right.
\label{2conddefinSx}
\end{equation}
Finally, as conditions (\ref{2conddefinSx}) must be verified for all $(X,\theta) \in \mathbb{R}^2 \times [0,2\pi]$ and remarking that for all $(\alpha,\beta) \in \mathbb{R}^2$,
\begin{equation}
\max_{\theta \in [0,2\pi]} \left(\cos(\theta)\alpha+\sin(\theta)\beta\right)^2 = \alpha^2 + \beta^2,
\label{2majorFtheta}
\end{equation}
then, according to the lemma \ref{2wellposedrotinv}, the system is locally well-posed in $\mathcal{H}^s(\mathbb{R}^2)$ and hyperbolic, under conditions (\ref{2condwellposed}), with $\gamma \in ]0,1[$.
\end{proof}

\noindent
{\em Remark:} The conditions (\ref{2conddefinSx}) have already been found in \cite{duchene2010asymptotic}, in the {\em curl}-free case.

\noindent
To conclude, let $\gamma \in ]0,1[$, we define $\mathcal{S}^s_{\gamma} \subset \mathcal{H}^s(\mathbb{R}^2)^6$, an open subset of intial conditions such that the model (\ref{2systemmultilayer}) is symmetrizable:
conditions (\ref{2conddefinSx}) are verified:
\begin{equation}
\mathcal{S}^s_{\gamma}:= \left\{ {\bf u^0} \in \mathcal{H}^s(\mathbb{R}^2)^6 / {\bf u^0}\ \mathrm{verifies}\ \mathrm{conditions}\ {\rm (\ref{2condwellposed})}  \right\}.
\label{2O}
\end{equation}

\section{Exact set of hyperbolicity}
In the previous section, we proved the hyperbolicity of the {\em Cauchy} problem, associated with the system (\ref{2systemmultilayer}) and the initial data ${\bf u^0}$, if $\gamma \in ]0,1[$ and ${\bf u^0} \in \mathcal{S}^s_{\gamma}$. However, it was just a sufficient condition of hyperbolicity. The purpose of this section is to characterize the exact set of hyperbolicity of the system (\ref{2systemmultilayer}): $\mathcal{H}_{\gamma}$, defined by
\begin{equation}
\mathcal{H}_{\gamma}:= \left\{ {\bf u^0}: \mathbb{R}^2 \mapsto \mathbb{R}^6 / {\bf u^0}\ \mathrm{verifies}\ \mathrm{conditions}\ {\rm (\ref{2spectrumAutheta})} \right\}
\label{2defHgamma}
\end{equation}
To do so, we reduce the analysis of the spectrum of $\mathsf{A}({\bf u},\theta)$: $\sigma(\mathsf{A}({\bf u},\theta))$, to the one of $\mathsf{A}_x(\mathsf{P}(\theta){\bf u})$, using the rotational invariance (\ref{2rotinv}). In this section, the study is performed onto the spectrum of $\mathsf{A}_x({\bf u})$ and is deduced, afterwards, to $\mathsf{A}_x(\mathsf{P}(\theta){\bf u})$. As the characteristic polynomial of $\mathsf{A}_x({\bf u})$ is equal to $\det(\mathsf{A}_x({\bf u})-\mu \mathsf{I_{6}})=(\mu-u_1)(\mu-u_2) Q(\mu)$, where $Q(\mu)$ is a quartic, it is necessary to get an exact real roots criterion for quartic equations

\subsection{Real roots criterion for quartic equations}
Considering a quartic equation
\begin{equation}
R(\lambda) := a_4 \lambda^4 + a_3 \lambda^3 + a_2 \lambda^2 + a_1 \lambda + a_0 = 0,
\label{2quartic}
\end{equation}
where $(a_0,a_1,a_2,a_3,a_4) \in \mathbb{R}^4$ and $a_4 > 0$. We define the {\em Sylvester}'s matrix $\mathsf{M}:=[\mathsf{M}_{i,j}]_{(i,j)\in [\![1,4 ]\!]}$ by 
\begin{equation}
\sum_{i,j} \mathsf{M}_{i,j} X^{4-i} Y^{4-j}=\frac{R(X)R'(Y)-R(Y)R'(X)}{X-Y},
\label{2mij}
\end{equation}

\noindent
Then, according to {\em Sturm}'s theorem, the roots of (\ref{2quartic}) are all real if and only if the matrix $\mathsf{M}$ is positive-definite or negative-definite. Then, using the {\em Sylvester}'s criterion on $\mathsf{M}$ ({\em i.e.} $\mathsf{M}>0$ if and only if all the leading principal minors are strictly positive), it provides an exact criterion of hyperbolicity of the system (\ref{2systemmultilayer}).

\begin{proposition}
The roots of the quartic equation are all real if and only if
\begin{equation}
\forall k \in [\![1,3]\!],\ m_{k+1} m_k > 0 
\label{2minors}
\end{equation}
where $m_k$ is the $k^{th}$ leading principal minor of $\mathsf{M}$.
\label{2propminors}
\end{proposition}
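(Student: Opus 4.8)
The plan is to deduce Proposition~\ref{2propminors} from the two facts just recalled: Sturm's theorem, which asserts that the roots of (\ref{2quartic}) are all real exactly when $\mathsf{M}$ is positive-definite or negative-definite, and Sylvester's criterion, which detects definiteness through the leading principal minors. The only extra input needed is that the negative-definite alternative can be ruled out. I would begin by noting that $\mathsf{M}$ is symmetric, since the right-hand side of (\ref{2mij}) is invariant under the exchange $X\leftrightarrow Y$ (numerator and denominator each changing sign), so both criteria above genuinely apply to $\mathsf{M}$.

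The only computation is the identification of $m_1=\mathsf{M}_{1,1}$. In $R(X)R'(Y)-R(Y)R'(X)$ the monomial $X^4Y^3$ can arise only from $a_4X^4\cdot 4a_4Y^3$ inside $R(X)R'(Y)$ --- it cannot come from $R(Y)R'(X)$, as $R'$ has degree $3$ --- so its coefficient there is $4a_4^2$. Writing the quotient in (\ref{2mij}) as $\sum_{i,j}\mathsf{M}_{i,j}X^{4-i}Y^{4-j}$ (indices in $[\![1,4]\!]$, hence degree at most $3$ in each variable) and multiplying back by $X-Y$, the only contribution to $X^4Y^3$ is $X\cdot\mathsf{M}_{1,1}X^3Y^3$, so $m_1=\mathsf{M}_{1,1}=4a_4^2>0$ because $a_4>0$. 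Thus $\mathsf{M}$ has a strictly positive diagonal entry, so it cannot be negative-definite; by Sturm's theorem the roots of (\ref{2quartic}) are all real if and only if $\mathsf{M}$ is positive-definite, and by Sylvester's criterion this holds if and only if $m_k>0$ for every $k\in[\![1,4]\!]$.

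It then remains to see that ``$m_k>0$ for all $k\in[\![1,4]\!]$'' is equivalent to ``$m_{k+1}m_k>0$ for all $k\in[\![1,3]\!]$''. The forward implication is immediate. For the converse, use $m_1=4a_4^2>0$ and induct: whenever $m_k>0$ and $m_{k+1}m_k>0$ one gets $m_{k+1}>0$, so applying this for $k=1,2,3$ forces $m_2,m_3,m_4>0$. This proves the proposition.

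The argument is largely bookkeeping; the one load-bearing step is \emph{the sign of $\mathsf{M}_{1,1}$}, which forbids the negative-definite branch and is exactly what allows the criterion to be phrased through the products $m_{k+1}m_k$ alone. I would also flag a caveat: the quoted form of Sturm's theorem tacitly assumes a simple spectrum, so if (\ref{2quartic}) has a repeated (necessarily real) root then $m_4=\det\mathsf{M}=0$, the inequality $m_4m_3>0$ fails, and the proposition is best read as characterising the quartics with four \emph{distinct} real roots --- which is the situation relevant to the diagonalizability of $\mathsf{A}_x({\bf u})$ used later.
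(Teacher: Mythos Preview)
Your argument is correct and follows the same route the paper sketches: Sturm's theorem reduces the question to definiteness of $\mathsf{M}$, Sylvester's criterion turns this into a sign condition on the $m_k$, and the positivity of $\mathsf{M}_{1,1}$ eliminates the negative-definite branch. The paper does not spell out a proof beyond this outline (it cites \cite{fuller1981root} and \cite{jury1981positivity}) and only records $m_1=4$ after specializing to the rescaled polynomial, whereas you compute $m_1=4a_4^2>0$ for the general quartic and make the induction from $m_1>0$ explicit; your caveat about repeated roots is also well taken.
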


\noindent
{\em Remark:} This general criterion is exactly the same given in \cite{fuller1981root} and \cite{jury1981positivity}.
\subsection{Scaling of the equation}
In order to rescale the equation $Q(\mu)=0$, undimensioned quantities are considered, assuming $h_1>0$:
\begin{equation}
\lambda:=\frac{\mu-u_1}{\sqrt{g h_1}},\  F_x:=\frac{u_2 - u_1}{\sqrt{g h_1}},\ F_y:=\frac{v_2 - v_1}{\sqrt{g h_1}},\ h:=\frac{h_2}{h_1}.
\label{2undimquant}
\end{equation}
It is straightforward $(\lambda, F_x,F_y,h) \in \mathbb{R} \times \mathbb{R} \times \mathbb{R} \times \mathbb{R}_+$.
Consequently, the equation $Q(\mu)=0$ is equivalent to $P(\lambda)=0$ with
\begin{equation}
P(\lambda):= \left[\lambda^2 - 1 \right] \left[ \left(\lambda -  F_x \right)^2 - h \right] - \gamma h.
\label{2PLambda}
\end{equation}

\noindent
According to (\ref{2mij}), the symmetric matrix $\mathsf{M}:=[\mathsf{M}_{i,j}]_{(i,j)\in [|1,4 |]}$ is here defined by
\begin{equation}
\left\{
\begin{array}{l}
\mathsf{M}_{1,1}=4\\
\mathsf{M}_{1,2}= -6  F_x \\
\mathsf{M}_{1,3}=  -2(1+h- F_x^2)\\
\mathsf{M}_{1,4}=  2  F_x\\
\mathsf{M}_{2,2}=  2(1+h+5  F_x^2) \\
\mathsf{M}_{2,3}=  -2  F_x (1-2h+2  F_x^2) \\
\mathsf{M}_{2,4}=  4 h(\gamma-1)\\
\mathsf{M}_{3,3}=  2(1+h^2+4  F_x^2+ F_x^4+2h(\gamma- F_x^2)) \\
\mathsf{M}_{3,4}=  -2  F_x(1+h(3\gamma-2)+2  F_x^2)\\
\mathsf{M}_{4,4}=  F_x^4+ F_x^2+h(1-2 F_x^2+\gamma( F_x^2-1)))+2(h^2(1-\gamma)
\end{array}
\right.
\label{2M}
\end{equation}

\noindent
{\em Remark:} As $m_1=4$, it is impossible $\mathsf{M}$ negative-definite and all the leading principal minors of $\mathsf{M}$ have to be strictly positive.

\subsection{Hyperbolicity in one dimension}
In this subsection, we give the exact criterion of real solutions for $P(\lambda)=0$ and deduce a general criterion of hyperbolicity of the model (\ref{2systemmultilayer}), in one dimension.
\begin{proposition}
There exist $(F_{crit}^+,F_{crit}^-) \in \mathbb{R}_+^2$, with $F_{crit}^+ \ge F_{crit}^- \ge 0$, such that the roots of $P(\lambda)$ are all real if and only if
\begin{equation}
\gamma \in ]0,1[\ \mathrm{and}\ |F_x| \in [0,F_{crit}^-[ \cup ]F_{crit}^+,+ \infty[.
\label{2rootP}
\end{equation}
\label{2propexactcrit}
\end{proposition}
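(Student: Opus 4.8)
The plan is to analyze the quartic $P(\lambda)$ directly rather than through the Sylvester matrix $\mathsf{M}$, since $P$ has a very convenient product structure. First I would exploit the symmetry $F_x \mapsto -F_x$, $\lambda \mapsto -\lambda$ (more precisely the shift $\lambda \mapsto \lambda + F_x$, which is available because only the difference $\lambda - F_x$ appears in one factor and $\lambda$ in the other): it suffices to treat $F_x \ge 0$ and then define the critical thresholds as functions of $h, \gamma$; the statement is phrased in terms of $|F_x|$ precisely because of this symmetry. So fix $h > 0$, $\gamma \in ]0,1[$ and study, as $F_x$ ranges over $\mathbb{R}_+$, when all four roots of $P(\lambda) = (\lambda^2 - 1)((\lambda - F_x)^2 - h) - \gamma h$ are real.

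The key observation is that $P(\lambda) + \gamma h = (\lambda^2 - 1)((\lambda-F_x)^2 - h)$ is a product of two downward-then-upward parabolas in $\lambda$, with roots $\pm 1$ and $F_x \pm \sqrt{h}$. I would view the equation $P(\lambda) = 0$ as $g(\lambda) := (\lambda^2 - 1)((\lambda - F_x)^2 - h) = \gamma h$ and count intersections of the graph of $g$ with the horizontal line at height $\gamma h > 0$. Since $g$ is a quartic with positive leading coefficient, $g \to +\infty$ at $\pm\infty$; $g$ has three critical points (one local max between its two ``valleys'', two local minima). All four roots of $P$ are real if and only if the line $y = \gamma h$ meets the graph in four points (counted with multiplicity), which happens exactly when $\gamma h$ lies below the local-maximum value \emph{and} above both local-minimum values of $g$ — equivalently, when $\gamma h \le \min(g(\lambda_{\mathrm{loc\,min}}^{(1)}), g(\lambda_{\mathrm{loc\,min}}^{(2)}))$ is violated in the right direction, i.e. $\gamma h$ is at most the local max and at least each local min. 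The degenerate cases (double roots) give the boundary values $F_{crit}^\pm$. To make this rigorous I would compute $P'(\lambda)$, a cubic, show it has three real roots for all parameter values in range (this follows because $g$ is a genuine double-well quartic whenever the two wells $[-1,1]$ and $[F_x - \sqrt h, F_x + \sqrt h]$ are distinct; the confluent case needs separate easy handling), order them, and express the real-rootedness of $P$ as the sign condition on $P$ evaluated at these critical points.

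Then I would analyze how the relevant critical values of $g$ depend on $F_x$. As $F_x$ increases from $0$, the two wells of $g$ separate; for $F_x$ small the central local maximum of $g$ is low (in fact the two wells overlap and one expects four real roots), while for $F_x$ large the wells are far apart, the barrier between them is high, and again all four roots are real; in an intermediate band the line $y = \gamma h$ slips between the local max and a local min and two roots become complex. Monotonicity/continuity of the critical values of $g$ in $F_x$ then yields exactly two threshold values $F_{crit}^- \le F_{crit}^+$ in $\mathbb{R}_+$, outside the closed interval $[F_{crit}^-, F_{crit}^+]$ all roots real, inside (strictly) two are complex; at $F_x = 0$ the roots are $\pm 1$ and $\pm\sqrt{h + \gamma h}$, all real, which is consistent with $0 \le F_{crit}^-$ (and shows $F_{crit}^- = 0$ is possible for some $h, \gamma$, e.g.\ forcing $F_{crit}^+ \ge F_{crit}^- = 0$ when the intermediate band already touches $F_x = 0$). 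Finally I would identify $F_{crit}^\pm$ as the values of $|F_x|$ for which $P$ has a double root, i.e.\ where $\mathrm{Res}(P, P') = 0$ (the discriminant of $P$ vanishes); this resurfaces the Sylvester/Sturm description of Proposition~\ref{2propminors}, so I would cross-check that the vanishing of $m_4 = \det \mathsf{M}$ (with $m_1, m_2, m_3 > 0$ on the relevant region) gives the same two values, which also shows $F_{crit}^\pm$ are roots of an explicit polynomial in $F_x^2$ with coefficients in $h, \gamma$.

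The main obstacle is the bookkeeping in the middle step: proving that $P'$ has three real roots throughout the parameter range and controlling the \emph{ordering} of $g$'s two local minima relative to each other and to the local maximum as $(F_x, h, \gamma)$ vary, so that one can pin down that the bad set in $F_x$ is a single interval (not a union of several). Handling the confluent well case $F_x = \sqrt h \pm 1$ — where $g$ degenerates — and the possibility $F_{crit}^- = 0$ cleanly, rather than as afterthoughts, is the other delicate point; a continuity/connectedness argument on the discriminant locus $\{\mathrm{disc}_\lambda P = 0\}$ in the $(F_x, h, \gamma)$-space is probably the cleanest way to organize all of this and to guarantee that exactly two thresholds appear.
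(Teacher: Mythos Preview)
Your approach is genuinely different from the paper's. The paper works entirely through the Sylvester--Sturm machinery: it computes the leading principal minors $m_1,\dots,m_4$ of $\mathsf{M}$, shows $m_1,m_2>0$ unconditionally, obtains the condition $\gamma\in\,]0,1[$ from the sign of $m_3$ (Lemma~\ref{2lemmaDelta3}), and then, assuming $\gamma\in\,]0,1[$, analyzes $m_4=16h\,q(F_x^{2})$ as a quartic $q$ in $z=F_x^{2}$, proving that $q$ has exactly two positive roots $F_{crit}^{\pm\,2}$ via sign evaluations at $z=0$, $z=(1+\sqrt h)^2$, $z\to+\infty$ together with a second Sylvester test applied to $q$ itself (Lemma~\ref{2lemmaDelta4}). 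Your level-set picture $g(\lambda)=\gamma h$ is more geometric; one genuine payoff is that since $g$ vanishes at $\lambda=\pm1$ one sees immediately that $P$ always has at least two real roots, so the ``zero real roots'' branch of the quartic discriminant never arises and real-rootedness of $P$ is governed by $\operatorname{disc}P$ alone --- this bypasses the need for the full chain of minors.

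There are, however, two real gaps. First, you fix $\gamma\in\,]0,1[$ at the outset and never argue the necessity half of the $\gamma$ condition in the biconditional; in the paper this is exactly what Lemma~\ref{2lemmaDelta3} supplies, and your sketch has no analogue. Second, the crux --- that the set of $F_x\ge 0$ on which $P$ fails to have four real roots is a \emph{single} interval --- is asserted via ``monotonicity of the critical values of $g$ in $F_x$'' but not established; the only critical value that matters is the local maximum of $g$ (both local minima are $\le 0<\gamma h$), and its dependence on $F_x$ is not shown to be unimodal. Your fallback, locating $F_{crit}^{\pm}$ as zeros of $\operatorname{disc}_\lambda P$ in $F_x^{2}$, is precisely the paper's $m_4$ analysis: one must still prove that the quartic $q(z)$ has exactly two positive roots, which the paper does with nontrivial work. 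So the ``cross-check'' you propose is not a confirmation but the actual substance of the argument. A minor slip: at $F_x=0$ the roots are \emph{not} $\pm1$ and $\pm\sqrt{h+\gamma h}$; writing $\mu=\lambda^2$ gives $\mu^2-(1+h)\mu+h(1-\gamma)=0$, though your conclusion that all four $\lambda$-roots are real there (hence $F_{crit}^{-}>0$ strictly, contrary to your parenthetical) is correct.
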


\noindent
In order to prove this proposition, we evaluate the exact conditions (\ref{2minors}), to prove the proposition \ref{2propminors}, which is an easy consequence of the following lemmata.
\begin{lemma}
For all $(F_x,h,\gamma) \in \mathbb{R} \times \mathbb{R}_+^2$,
\begin{equation}
m_1>0,\ m_2>0.
\label{2delta12}
\end{equation}
\label{2lemmaDelta12}
\end{lemma}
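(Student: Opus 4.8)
The plan is to compute $m_1$ and $m_2$ explicitly from the entries of $\mathsf{M}$ given in \eqref{2M} and check positivity directly. Since $\mathsf{M}$ is symmetric, its $2\times2$ leading principal minor is $m_2 = \mathsf{M}_{1,1}\mathsf{M}_{2,2} - \mathsf{M}_{1,2}^2$, and $m_1 = \mathsf{M}_{1,1}$. The first claim is immediate: $m_1 = \mathsf{M}_{1,1} = 4 > 0$, which is also the content of the remark immediately preceding the lemma. So the real work is in the sign of $m_2$.

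For $m_2$, I would substitute $\mathsf{M}_{1,1} = 4$, $\mathsf{M}_{2,2} = 2(1+h+5F_x^2)$, and $\mathsf{M}_{1,2} = -6F_x$, giving
\begin{equation}
m_2 = 4 \cdot 2(1+h+5F_x^2) - (-6F_x)^2 = 8(1+h+5F_x^2) - 36F_x^2 = 8 + 8h + 40F_x^2 - 36F_x^2.
\nonumber
\end{equation}
This simplifies to $m_2 = 8 + 8h + 4F_x^2 = 4(2 + 2h + F_x^2)$. Since $h \in \mathbb{R}_+$ and $F_x \in \mathbb{R}$, every term is nonnegative and the constant term $8$ is strictly positive, so $m_2 \geq 8 > 0$ for all $(F_x, h, \gamma) \in \mathbb{R} \times \mathbb{R}_+^2$. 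Note that $\gamma$ does not even enter $m_2$, so the claim holds for all $\gamma$, not just $\gamma \in ]0,1[$; this matches the statement of the lemma.

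There is essentially no obstacle here — this is a routine two-line computation, and the lemma is stated precisely because it lets the harder minors $m_3$ and $m_4$ (which do depend on $\gamma$ and carry the real content of Proposition \ref{2propexactcrit}) be analyzed separately. The only thing to be careful about is transcribing the entries $\mathsf{M}_{1,1}$, $\mathsf{M}_{1,2}$, $\mathsf{M}_{2,2}$ correctly from \eqref{2M} and confirming that the cross term $36F_x^2$ does not overwhelm the $40F_x^2$ coming from $\mathsf{M}_{2,2}$; it does not, leaving a clean surplus of $4F_x^2$. Hence both inequalities in \eqref{2delta12} follow unconditionally.
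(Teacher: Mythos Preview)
Your proof is correct and follows essentially the same approach as the paper: compute $m_1=4$ and $m_2=8(1+h)+4F_x^2$ directly from the entries of $\mathsf{M}$ in \eqref{2M}, then note positivity since $h>0$. The paper simply states these values without writing out the intermediate arithmetic you included.
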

\begin{proof}
According to the expression of $\mathsf{M}$, $m_1=4$ and $m_2=8(1+h)+4 F_x^2$. As $h$ is assumed strictly positive, the lemma \ref{2lemmaDelta12} is proved.
\end{proof}

\begin{lemma}
Let $(F_x,h,\gamma) \in \mathbb{R} \times \mathbb{R}_+^2$. Then $\gamma \in ]0,1[$ if and only if
\begin{equation}
m_3>0.
\label{2delta3}
\end{equation}
\label{2lemmaDelta3}
\end{lemma}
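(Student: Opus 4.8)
The plan is to compute the third leading principal minor $m_3 = \det\begin{pmatrix} \mathsf{M}_{1,1} & \mathsf{M}_{1,2} & \mathsf{M}_{1,3} \\ \mathsf{M}_{1,2} & \mathsf{M}_{2,2} & \mathsf{M}_{2,3} \\ \mathsf{M}_{1,3} & \mathsf{M}_{2,3} & \mathsf{M}_{3,3} \end{pmatrix}$ explicitly from the entries listed in (\ref{2M}), and show that the resulting expression, viewed as a function of $(F_x,h,\gamma)$ on the domain $\mathbb{R}\times\mathbb{R}_+^2$, is positive precisely when $\gamma\in\,]0,1[$. First I would expand the $3\times 3$ determinant symbolically. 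Since $\mathsf{M}_{1,1}=4$, $\mathsf{M}_{1,2}=-6F_x$, $\mathsf{M}_{1,3}=-2(1+h-F_x^2)$, $\mathsf{M}_{2,2}=2(1+h+5F_x^2)$, $\mathsf{M}_{2,3}=-2F_x(1-2h+2F_x^2)$ and $\mathsf{M}_{3,3}=2(1+h^2+4F_x^2+F_x^4+2h(\gamma-F_x^2))$, the determinant is a polynomial in $F_x$, $h$ and $\gamma$. The key structural fact I expect is that $\gamma$ enters $m_3$ only through $\mathsf{M}_{3,3}$, and it enters there linearly (the term $4h\gamma$). Hence $m_3$ is an affine function of $\gamma$: $m_3 = c_1(F_x,h)\,\gamma + c_0(F_x,h)$, where $c_1(F_x,h)$ is $4h$ times the $(3,3)$-cofactor, namely $c_1 = 4h\bigl(\mathsf{M}_{1,1}\mathsf{M}_{2,2}-\mathsf{M}_{1,2}^2\bigr) = 4h\cdot\bigl(4\cdot2(1+h+5F_x^2) - 36F_x^2\bigr) = 4h\bigl(8(1+h)+4F_x^2\bigr) = 4h\,m_2 > 0$ for $h>0$. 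This monotonicity in $\gamma$ is the crux: once I know $m_3$ is strictly increasing in $\gamma$, it suffices to check the two endpoint values $\gamma=0$ and $\gamma=1$.

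Next I would evaluate $m_3$ at $\gamma=1$ and at $\gamma=0$ and show both vanish (so that the affine function $m_3(\gamma)$ has roots exactly at $0$ and $1$, hence is positive strictly between them and negative outside). The expectation, consistent with the physics, is $m_3(F_x,h,1)=0$ and $m_3(F_x,h,0)=0$ identically in $(F_x,h)$: at $\gamma=1$ the two layers have equal density and the internal mode degenerates, while at $\gamma=0$ the upper layer is massless. Concretely, at $\gamma=1$ the polynomial $P(\lambda)$ in (\ref{2PLambda}) factors in a way that makes the Sylvester matrix drop rank in its $3\times3$ corner; I would verify the identity $m_3(F_x,h,1)\equiv 0$ by direct substitution $\mathsf{M}_{3,3}\mapsto 2(1+h^2+4F_x^2+F_x^4+2h-2hF_x^2+2h)$ wait — more carefully $\mathsf{M}_{3,3}|_{\gamma=1}=2\bigl((1+h)^2+4F_x^2+F_x^4-2hF_x^2+... \bigr)$; plugging into the determinant and simplifying should give $0$. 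Similarly at $\gamma=0$. Given $c_1=4h\,m_2>0$ and the two endpoint zeros, we conclude $m_3>0 \iff \gamma\in\,]0,1[$, which is the lemma. (The edge cases $h=0$ are outside the domain since $h=h_2/h_1>0$, and I would note $h>0$ is standing.)

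The main obstacle is purely computational: expanding and factoring the $3\times3$ determinant to verify cleanly that $m_3$ vanishes at $\gamma=0$ and $\gamma=1$. The determinant, before collecting in $\gamma$, is a degree-$6$ polynomial in $F_x$ with $h$-dependent coefficients, so the algebra is heavy and error-prone; the right way to organize it is exactly the affine-in-$\gamma$ decomposition above, which reduces the task to two polynomial identities in $(F_x,h)$ plus the easy sign $c_1=4h\,m_2>0$. A useful cross-check is to compare with the factored form of $P(\lambda)$ at the degenerate values of $\gamma$: at $\gamma=1$, $P(\lambda)=(\lambda^2-1)((\lambda-F_x)^2-h)-h$, and at $\gamma=0$, $P(\lambda)=(\lambda^2-1)((\lambda-F_x)^2-h)$, which has the obvious real roots $\lambda=\pm1$ and $\lambda=F_x\pm\sqrt h$, so the discriminant-type quantity $m_3$ degenerating there is structurally expected. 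I would also double-check the claimed monotonicity sign against a single numerical sample (e.g. $F_x=0$, $h=1$) to be safe. Once these identities are in hand, the proof is immediate: $m_3$ is affine and increasing in $\gamma$ with zeros exactly at the endpoints of $]0,1[$.
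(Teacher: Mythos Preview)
Your observation that $m_3$ is affine in $\gamma$ with slope $c_1=4h\,m_2>0$ is correct and is a nice structural fact. But it immediately rules out the rest of your plan: an affine function of $\gamma$ with nonzero slope has exactly \emph{one} zero, so $m_3(\gamma)$ cannot vanish at both $\gamma=0$ and $\gamma=1$. And indeed it does not. Plugging into the explicit formula
\[
m_3=8(1+h)F_x^{4}-16\bigl(1-h(6+\gamma)+h^{2}\bigr)F_x^{2}+8(1+h)\bigl(1-2h(1-2\gamma)+h^{2}\bigr)
\]
at $(F_x,h)=(0,1)$ gives $m_3|_{\gamma=1}=8\cdot2\cdot4=64\neq0$, and at $(F_x,h)=(0,2)$ gives $m_3|_{\gamma=0}=8\cdot3\cdot1=24\neq0$. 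Neither endpoint is an identity in $(F_x,h)$. (Your physical heuristic is also off: at $\gamma=0$ the quartic $P$ factors with four \emph{distinct} real roots $\pm1,\ F_x\pm\sqrt{h}$, so the Sylvester minors are strictly positive there, not zero.) The upshot is that for fixed $(F_x,h)$ the set $\{m_3>0\}$ is a half-line $\gamma>-c_0/c_1$ whose threshold genuinely depends on $(F_x,h)$; the affine-in-$\gamma$ decomposition alone cannot produce a condition on $\gamma$ only.

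The paper's argument slices the problem orthogonally to yours. It writes $m_3$ as a quadratic $p_3(z)=b_2z^{2}+b_1z+b_0$ in $z:=F_x^{2}$ with $b_2=8(1+h)>0$, and asks when $p_3(z)>0$ for all $z\ge0$. This splits into two cases: either the discriminant of $p_3$ is negative (which is analyzed as a condition on $h$, yielding an interval $[h_{crit}^{-},h_{crit}^{+}]$ outside of which $p_3>0$ automatically), or the discriminant is nonnegative and both roots of $p_3$ are negative, i.e.\ $b_0b_2>0$ and $b_1/b_2>0$. The constraint $\gamma\in\,]0,1[$ emerges from the sign analysis of $b_0=8(1+h)\bigl(1-2h(1-2\gamma)+h^{2}\bigr)$ in the second case, using that the quadratic-in-$h$ factor has discriminant $-16\gamma(1-\gamma)$. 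The essential point you are missing is that $F_x$ (and $h$) are free variables over which positivity has to be checked; freezing $(F_x,h)$ and varying $\gamma$ is the wrong direction.
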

\begin{proof}
As the expression of $m_3$ is
\begin{equation}
m_3 = 8(1+h)  F_x^4 - 16 (1 - h(6 + \gamma) + h^2)  F_x^2 + 8(1+h)(1- 2 h(1 - 2 \gamma) + h^2),
\end{equation}
it is considered as a quadratic polynomial in $z:= F_x^2$, with main coefficient positive
\begin{equation}
m_3=p_3(z):=b_2 z^2+b_1 z+b_0.
\label{2m3quadratic}
\end{equation}
with $b_2:=8(1+h)$, $b_1:=- 16 (1 - h(6 + \gamma) + h^2)$ and $b_3:=8(1+h)(1- 2 h(1 - 2 \gamma) + h^2)$. Then, it is strictly positive if and only if one of the two following assertions is verified
\begin{equation}
\left\{
\begin{array}{l}
p_3^1(h)<0,\\
\mathrm{the}\ \mathrm{roots}\ \mathrm{of}\ p_3(z)\ \mathrm{are}\ \mathrm{all}\ \mathrm{strictly}\ \mathrm{negative}.
\end{array}
\right.
\label{1assertionm3pos}
\end{equation}
where $p_3^1$ is the discriminant of the quadratic $p_3$
\begin{equation}
m_3^1:=-256h(6h-\gamma-2)(h(2+\gamma)-6).
\label{2m31}
\end{equation}

\noindent
In the first case, noting that roots of $p_3^1(h)$ are $0$, $\frac{6}{2+\gamma}$ and $\frac{2+\gamma}{6}$, and as $h$ is assumed strictly positive, the discriminant is strictly negative if and only if $2+\gamma>0$ and $h \not\in [h_{crit}^-,h_{crit}^+]$ or if $2+\gamma<0$ and $h \in [h_{crit}^-,h_{crit}^+]$, with
\begin{equation}
\left\{
\begin{array}{l}
h_{crit}^-:=\min(\frac{6}{2+\gamma},\frac{2+\gamma}{6}),\\
h_{crit}^+:=\max(\frac{6}{2+\gamma},\frac{2+\gamma}{6}).
\end{array}
\right.
\label{2hcrit}
\end{equation}
As $\gamma$ is assumed positive, if $h \not\in [h_{crit}^-,h_{crit}^+]$ then $m_3 >0$, and if $h \in [h_{crit}^-,h_{crit}^+]$, the second assertion of (\ref{1assertionm3pos}) should be verified: the roots of $p_3(z):=b_2 z^2+b_1 z+b_0$ are all strictly negative, which is equivalent to
\begin{equation}
b_2 b_0 > 0\ \mathrm{and}\ \frac{b_1}{b_2}>0.
\label{2secondassertverified}
\end{equation}
As, $b_2 b_0=(8(1+h))^2(1-2h(1-2\gamma)+h^2) > 0$ if and only if $4\gamma(1-\gamma)>0$
\begin{equation}
b_2 b_0>0 \Leftrightarrow \gamma  \in ]0,1[.
\label{2b2b0}
\end{equation}
Moreover, for all $(h,\gamma) \in [h_{crit}^-,h_{crit}^+] \times \mathbb{R}_+$, one can check $\frac{b_1}{b_2}>0$. To conclude, $m_3>0$ if and only if $\gamma \in ]0,1[$ and the lemma \ref{2lemmaDelta3} is proved.
\end{proof}

\begin{lemma}
Let $(F_x,h,\gamma) \in \mathbb{R} \times \mathbb{R}_+^2$ such that $\gamma \in ]0,1[$. Then, there exist two positive real, denoted by $ F ^{\pm}_{crit}$, such that $ F ^{+}_{crit} \ge  F ^{-}_{crit} \ge 0$ and
\begin{equation}
m_4>0 \Leftrightarrow | F_x| \in [0, F ^{-}_{crit}[\ \cup\ ] F ^{+}_{crit},+\infty[.
\label{2delta4}
\end{equation}
\label{2lemmaDelta4}
\end{lemma}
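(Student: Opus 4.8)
The plan is to identify $m_4=\det\mathsf M$ with a local maximum of a fixed quartic and then run a one–variable monotonicity argument in $z:=F_x^2$.

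\textbf{Step 1 — the sign of $m_4$.} Put $q(\lambda):=(\lambda^2-1)\big((\lambda-F_x)^2-h\big)$, so that $P=q-\gamma h$ and $P'=q'$ (cf.\ (\ref{2PLambda})). The matrix $\mathsf M$ is the Bezoutian of $P$ and $P'$, hence $m_4=\mathrm{disc}(P)$; evaluating the resultant $\mathrm{Res}(P,P')$ at the roots of $P'=q'$ gives
\begin{equation*}
m_4=256\prod_{q'(\mu)=0}\big(q(\mu)-\gamma h\big).
\end{equation*}
For $z$ outside the exceptional set of Step~2 the quartic $q$ has four distinct real roots $\pm1,\ F_x\pm\sqrt h$, so $q'$ has exactly three real roots $\mu_1<\mu_2<\mu_3$, one in each gap between consecutive roots of $q$; therefore $q(\mu_1)<0$, $q(\mu_3)<0$, while $M:=q(\mu_2)>0$ is the value of $q$ at its ``central'' critical point. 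Since $\gamma h>0$, the first and third factors are negative, whence
\begin{equation*}
\mathrm{sign}(m_4)=\mathrm{sign}\big(M-\gamma h\big),\qquad\text{i.e.}\qquad m_4>0\ \Longleftrightarrow\ M(F_x,h)>\gamma h .
\end{equation*}

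\textbf{Step 2 — the function $z\mapsto M$.} As $M$ is even in $F_x$, regard it as a function of $z\ge0$. I claim it is continuous, with $M(0)=h$, $M(z)\to+\infty$ as $z\to+\infty$, $M(z_2)=0$ and $M>0$ for $z\neq z_2$, where $z_1:=(1-\sqrt h)^2\le z_2:=(1+\sqrt h)^2$ are the only values of $z$ at which two roots of $q$ collide; and $M$ is strictly monotone on each of $]0,z_1[$, $]z_1,z_2[$, $]z_2,+\infty[$. Indeed, away from $z_1,z_2$ the middle critical point $\mu_2$ is a simple root of $q'$ in the non-degenerate central gap, hence a smooth function $\lambda^\ast(F_x)$ by the implicit function theorem, and the envelope theorem gives $\partial_{F_x}M=-2\big((\lambda^\ast)^2-1\big)(\lambda^\ast-F_x)$, so that for $F_x>0$
\begin{equation*}
\frac{dM}{dz}=-\frac{\big((\lambda^\ast)^2-1\big)\,(\lambda^\ast-F_x)}{F_x},
\end{equation*}
which never vanishes: $\lambda^\ast=\pm1$ would give $M=q(\lambda^\ast)=0$, and $\lambda^\ast=F_x$ would give $q'(F_x)=-2F_xh\neq0$, both impossible. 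Continuity across $z_1,z_2$ and the boundary values $M(0)=h$, $M(z_2)=0$ follow by tracking the central gap as two roots of $q$ merge.

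\textbf{Step 3 — conclusion.} By Step~2, on $[z_1,+\infty)$ the graph of $M$ is ``$V$-shaped'': strictly decreasing from $M(z_1)>0$ down to $M(z_2)=0$, then strictly increasing to $+\infty$; on $[0,z_1]$ it is monotone between the positive values $h$ and $M(z_1)$. Since $\gamma\in\,]0,1[$ we have $\gamma h<h=M(0)$, so a short case distinction (according to whether $M$ is increasing or decreasing on $[0,z_1]$, and to the sign of $M(z_1)-\gamma h$) shows that $\{z\ge0:\ M(z)\le\gamma h\}$ is a closed interval $[z^-,z^+]$; it is non-empty because it contains $z_2$ (where $M=0<\gamma h$), and $0<z^-\le z^+<+\infty$. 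Hence $\{z\ge0:\ m_4>0\}=\{M>\gamma h\}=[0,z^-[\,\cup\,]z^+,+\infty[$, and setting $F_{crit}^{\pm}:=\sqrt{z^\pm}$ gives $F_{crit}^{+}\ge F_{crit}^{-}\ge0$ and exactly the equivalence (\ref{2delta4}).

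\textbf{Main difficulty.} The only non-routine part is Step~2: the piecewise smoothness of $\lambda^\ast(F_x)$, the identification of $z_1,z_2$ as the sole breakpoints, and the continuity/boundary behaviour of $M$ at the degenerate configurations of $q$ (which differ for $h<1$, $h=1$, $h>1$). A purely algebraic alternative avoids these subtleties: write $m_4=p_4(z)$, a quartic in $z$ with positive leading coefficient and $p_4(0)=16h(1-\gamma)[(1-h)^2+4h\gamma]^2>0$; were $\{p_4<0\}$ not an interval, Rolle's theorem and Descartes' rule of signs would force the pattern $c_3<0,\ c_2>0,\ c_1<0$ in $p_4'(z)=4c_4z^3+3c_3z^2+2c_2z+c_1$, which one rules out on the explicit coefficients — but that verification is computationally heavier than the geometric route.
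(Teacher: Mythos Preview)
Your argument is correct and takes a genuinely different route from the paper. The paper works purely with the quartic in $z=F_x^2$: it writes $m_4=16h\,q(z)$ for an explicit degree-four polynomial $q$, observes $q(0)>0$, $q((1+\sqrt h)^2)<0$, $q(+\infty)=+\infty$ to get at least two positive real roots, and then rules out four real roots by checking that the Sylvester matrix of $q$ itself fails the positivity test (the key computation is $n_4<0$). Hence $q$ has exactly two real roots, both positive, and the statement follows. Your approach instead reads $m_4$ as the discriminant of $P$, factors it through the critical values of the auxiliary quartic $q(\lambda)=(\lambda^2-1)((\lambda-F_x)^2-h)$, and reduces everything to the level set $\{M\le\gamma h\}$ of the central local maximum $M(z)$, whose piecewise monotonicity you obtain by the envelope formula. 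What your route buys is a transparent geometric reason why $\{m_4\le0\}$ is a single interval (it is a sublevel set of an essentially unimodal function with $M(0)=h>\gamma h$ and a unique zero at $z_2$), and it avoids the heavy explicit evaluation of $n_4$. What the paper's route buys is brevity and uniformity: there is no need to track which pair of roots of $q$ collides at $z_1$ or $z_2$, no separate discussion of $h\lessgtr1$, and no continuity argument for $\lambda^\ast$ across the degenerate configurations --- exactly the points you flag as the main difficulty. Both proofs also identify the same test point $z_2=(1+\sqrt h)^2$ at which $m_4<0$, so the two arguments are consistent.
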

\begin{proof}
Considering $m_4 = 16 h q(z)$ where
\begin{equation}
\begin{array}{rl}
q(z) :=& z^4 + (h+1)(\gamma-4)z^3 - \left(3( h^2+1)(\gamma - 2)-h(\gamma^2 -26 \gamma +4) \right)z^2 \\
	         & + (1+h) \left((h^2+1)(3 \gamma - 4)+h(-20 \gamma^2+10 \gamma+8) \right) z \\
	         & - (\gamma-1) \left( (h-1)^2 + 4 \gamma h \right)^2
\end{array}
\label{2T}
\end{equation}
We denote by $\{z_1,z_2,z_3,z_4\}$ the roots of $q$. If $\gamma \in ]0,1[$, then it is obvious that
\begin{equation}
\left\{
\begin{array}{l}
q(0)>0,\\
q\left((1+\sqrt{h})^2\right)<0,\\
\lim_{z \rightarrow + \infty} q(z)=+\infty
\end{array}
\right.
\label{2signq1}
\end{equation}
 and then, $q$ has, at least, two positive real roots . Moreover, as $\lim_{z \rightarrow - \infty} q(z)=+\infty$ and the product of the roots is positive if $\gamma \in ]0,1[$
\begin{equation}
z_1 z_2 z_3 z_4=(1-\gamma)\left((h-1)^2+ 4\gamma h\right)>0,
\label{2signqinf}
\end{equation}
and the two other roots are complex or have the same sign. However, if all the roots are real, the {\em Sylvester}'s criterion is necessarily verified for the quartic $q$. Then, the {\em Sylvester}'s matrix associated to $q$ is not positive-definite because $n_k$, the $k^{\mathrm{th}}$ leading principal minors, with $k \in [\![1,4]\!]$, are such that
\begin{equation}
\forall \gamma \in  ]0,1[,\
\left\{
\begin{array}{l}
n_1=4 >0,\\
n_4=-\left(\gamma h(h-1)\right)^2(27\gamma^2(1+h^2)-2h(2\gamma^3+3\gamma^2+96\gamma-128))^3<0
\end{array}
\right.
\label{2n1n3}
\end{equation}
Consequently, the proposition \ref{2propminors} is not verified and all the roots of $q$ are not real. Then, there are exactly two positive roots of $q$, denoted by $ F ^{\pm\ 2}_{crit}$. Finally, if $\gamma \not\in ]0,1[$, one can prove that $q(z)$ has only one positive root, but it is not vital for the exact criterion of hyperbolicity, as $m_3>0$ if and only if $\gamma \in ]0,1[$.
\end{proof}

\noindent
{\em Remark:} The critical quantities $ F ^{\pm\ 2}_{crit}$ are analytical functions of $h$ and $\gamma$. The existence of these quantities has been noticed numerically in \cite{castro2011numerical}.

\begin{theorem}
Let ${\bf u^0}: \mathbb{R}^2 \mapsto \mathbb{R}^6$. The system {\em (\ref{2systemmultilayer})}, in one dimension, with initial data ${\bf u^0}$, is hyperbolic if and only if 
\begin{equation}
\left\{
\begin{array}{l}
\gamma \in ]0,1[\\
\inf_{x \in \mathbb{R}} h_1^0(x)>0,\ \inf_{x \in \mathbb{R}} h_2^0(x) >0,\\
\forall x \in \mathbb{R},\ | F_x^0(x) | < F ^{-\ 0}_{crit}(x)\ \mathrm{or}\ |F_x^0(x) | > F ^{+\ 0}_{crit}(x),
\end{array}
\right.
\label{2hypcrit1d}
\end{equation}
with $ F ^{\pm}_{crit}$ defined in lemma {\em \ref{2lemmaDelta4}}.
\label{2hypcritthm1d}
\end{theorem}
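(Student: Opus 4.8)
The plan is to globalize over $x\in\mathbb{R}$ the pointwise real-root analysis of the quartic already packaged in Proposition \ref{2propexactcrit}; the theorem is essentially that proposition translated back to the original variables, together with a discussion of the degenerate states. Since in one dimension $\theta$ plays no role, Proposition \ref{2propcrithypuseful} says that the system with data ${\bf u^0}$ is hyperbolic iff $\sigma\big(\mathsf{A}_x({\bf u^0}(x))\big)\subset\mathbb{R}$ for every $x$ (with the uniform bound of (\ref{2expAutheta}), discussed in the last paragraph). I would then use the factorization recalled in the text, $\det\big(\mathsf{A}_x({\bf u^0}(x))-\mu\mathsf{I_{6}}\big)=(\mu-u_1^0(x))(\mu-u_2^0(x))\,Q(\mu)$ with $Q$ a monic quartic whose coefficients depend on $(h_1^0,h_2^0,u_1^0,u_2^0)(x)$ and $\gamma$; since $u_1^0(x),u_2^0(x)\in\mathbb{R}$, hyperbolicity at $x$ reduces to: all four roots of $Q$ are real.

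\emph{The thicknesses must be bounded below.} Here I would show that the first two lines of (\ref{2hypcrit1d}) are forced. As $h_1,h_2$ are layer thicknesses one has $h_1^0,h_2^0\ge 0$; if one of them vanishes at some point the configuration is degenerate (a layer disappears), the non-dimensionalization below breaks down, and one exhibits either a non-semisimple double eigenvalue of $\mathsf{A}_x$ — so $\tau\mapsto\exp(-i\tau\mathsf{A}_x)$ grows and (\ref{2expAutheta}) fails — or, if a thickness were negative, a genuinely non-real root of $Q(\mu)=\big((\mu-u_1^0)^2-gh_1^0\big)\big((\mu-u_2^0)^2-gh_2^0\big)-\gamma g^2 h_1^0 h_2^0$. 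Moreover, even with pointwise positivity, $\inf_x h_i^0=0$ lets the diagonalization of $\mathsf{A}_x$ degenerate as $|x|\to\infty$, destroying the uniform constant. Hence necessarily $\inf_x h_1^0>0$ and $\inf_x h_2^0>0$, which are henceforth assumed.

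\emph{Non-dimensionalization and identification of the criterion.} For each $x$, the change of variables (\ref{2undimquant}) — a real affine bijection of the $\mu$-axis onto the $\lambda$-axis — turns $Q(\mu)=0$ into $P(\lambda)=0$ with $P$ as in (\ref{2PLambda}), because $Q(\mu)=g^2(h_1^0(x))^2\,P(\lambda)$. Thus $Q$ has all its roots real iff $P$ does, and Proposition \ref{2propexactcrit} gives that this holds iff $\gamma\in ]0,1[$ and $|F_x^0(x)|<F_{crit}^{-0}(x)$ or $|F_x^0(x)|>F_{crit}^{+0}(x)$, where $F_{crit}^{\pm0}(x)=F_{crit}^{\pm}\big(h_2^0(x)/h_1^0(x),\gamma\big)$ are the analytic functions of Lemma \ref{2lemmaDelta4}. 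Quantifying over $x$, and noting that $\gamma$ is a constant so that "$\gamma\in ]0,1[$" is a single condition, one recovers exactly (\ref{2hypcrit1d}); conversely, (\ref{2hypcrit1d}) gives real spectrum of $\mathsf{A}_x({\bf u^0}(x))$ for all $x$, which together with the previous paragraph yields the uniform bound, hence hyperbolicity.

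\emph{Main obstacle.} The algebraic substance — existence and ordering of $F_{crit}^{\pm}$, and the exact equivalence with real-rootedness — is Proposition \ref{2propexactcrit} and its lemmas, which may be quoted. What genuinely needs care is the second paragraph: rigorously disposing of the degenerate strata $\{h_1^0\le 0\}$ and $\{h_2^0\le 0\}$, and — the delicate point — upgrading "pointwise real spectrum" to the uniform operator estimate (\ref{2expAutheta}). For the latter one must check that on the open hyperbolic set the eigenprojections of $\mathsf{A}_x$ depend continuously on ${\bf u}$ and stay bounded as $|x|\to\infty$, which is where $\inf_x h_i^0>0$ is used (a point where two eigenvalues collide would create a Jordan block and again break uniformity, which is also why the inequalities in (\ref{2hypcrit1d}) are strict).
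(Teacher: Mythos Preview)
Your proposal is correct and follows the same route as the paper: in one dimension $\mathsf{A}({\bf u},\theta)$ reduces to $\mathsf{A}_x({\bf u})$, and Proposition~\ref{2propexactcrit} (via the scaling (\ref{2undimquant}) turning $Q(\mu)=0$ into $P(\lambda)=0$) gives exactly the pointwise criterion. The paper's own proof is literally two sentences to that effect; your additional care about the degenerate strata $h_i^0\le 0$ and about upgrading pointwise real spectrum to the uniform estimate (\ref{2expAutheta}) is not in the paper, which simply invokes Proposition~\ref{2propcrithypuseful} as a black-box equivalence and does not justify the $\inf_x$ in (\ref{2hypcrit1d}) separately.
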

\begin{proof}
In the one dimension case, the matrix $\mathsf{A}({\bf u},\theta)$ is reduced to $\mathsf{A}_x({\bf u})$. Consequently, applying directly proposition \ref{2propexactcrit}, the theorem \ref{2hypcritthm1d} is proved.
\end{proof}

\subsection{Hyperbolicity in two dimensions}
In this subsection, we can deduce from below an exact criterion of hyperbolicity of the model (\ref{2systemmultilayer}). The next lemma is a reformulation of the result mentioned in \cite{barros2008hyperbolicity}

\noindent
Then, we can prove the next theorem
\begin{theorem}
Let ${\bf u^0}: \mathbb{R}^2 \mapsto \mathbb{R}^6$. The system {\em (\ref{2systemmultilayer})}, with initial data ${\bf u^0}$, is hyperbolic if and only if 
\begin{equation}
\left\{
\begin{array}{l}
\gamma \in ]0,1[\\
\inf_{X \in \mathbb{R}^2} h_1^0(x)>0,\ \inf_{X \in \mathbb{R}^2} h_2^0(x) >0,\\
\inf_{X \in \mathbb{R}^2} F ^{-\ 0}_{crit}(X)^2-F_x^{0}(X)^2- F_y^{0}(X)^2 >0.
\end{array}
\right.
\label{2hypcrit2d}
\end{equation}
\label{2hypcritthm2d}
\end{theorem}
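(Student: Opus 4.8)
The plan is to deduce this two-dimensional criterion from the one-dimensional one of Theorem~\ref{2hypcritthm1d}, using the rotational invariance (\ref{2rotinv}). Since $\mathsf{P}(\theta)$ is invertible with $\mathsf{P}(\theta)^{-1}={}^{\top}\mathsf{P}(\theta)$, the matrices $\mathsf{A}({\bf u^0}(X),\theta)$ and $\mathsf{A}_x(\mathsf{P}(\theta){\bf u^0}(X))$ are similar, hence share the same spectrum; by Proposition~\ref{2propcrithypuseful} the system (\ref{2systemmultilayer}) is therefore hyperbolic if and only if the one-dimensional system is hyperbolic at every frozen state $\mathsf{P}(\theta){\bf u^0}(X)$, for all $(X,\theta)\in\mathbb{R}^2\times[0,2\pi]$, the uniform bound of Definition~\ref{2defcrithypuseful} being understood as a uniform control over $X$.

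The second step is to record how the scaling quantities of (\ref{2undimquant}) transform under $\mathsf{P}(\theta)$. From the form of $\mathsf{P}(\theta)$ in (\ref{2Ptheta}), the heights $h_1,h_2$ are unchanged, so $h=h_2/h_1$ is unchanged, and $\gamma$ is a fixed parameter; hence the critical thresholds $F^{\pm}_{crit}$ of Lemma~\ref{2lemmaDelta4}, which depend on $(h,\gamma)$ only, are unaffected. On the other hand the velocity jump $(u_2^0-u_1^0,\,v_2^0-v_1^0)$ is rotated, so that the relevant Froude number of the frozen state $\mathsf{P}(\theta){\bf u^0}(X)$ is $\widetilde{F}_x(\theta,X):=\cos(\theta)\,F_x^0(X)+\sin(\theta)\,F_y^0(X)$. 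Applying Theorem~\ref{2hypcritthm1d} to each such state, hyperbolicity of (\ref{2systemmultilayer}) becomes equivalent to $\gamma\in\,]0,1[$, $\inf_X h_1^0>0$, $\inf_X h_2^0>0$, and: for every $(X,\theta)$, either $|\widetilde{F}_x(\theta,X)|<F^{-\ 0}_{crit}(X)$ or $|\widetilde{F}_x(\theta,X)|>F^{+\ 0}_{crit}(X)$.

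The third and last step is to show that this last condition (over all $\theta$) is equivalent to $\inf_X\big(F^{-\ 0}_{crit}(X)^2-F_x^0(X)^2-F_y^0(X)^2\big)>0$. The implication from right to left is immediate from (\ref{2majorFtheta}): one has $\widetilde{F}_x(\theta,X)^2\le F_x^0(X)^2+F_y^0(X)^2<F^{-\ 0}_{crit}(X)^2$, uniformly in $\theta$ and, via the infimum, uniformly in $X$, so the lower branch holds everywhere. For the converse, fix $X$ and write $\widetilde{F}_x(\theta,X)=R(X)\cos(\theta-\phi(X))$ with $R(X):=\sqrt{F_x^0(X)^2+F_y^0(X)^2}$; then $\theta\mapsto|\widetilde{F}_x(\theta,X)|$ is continuous with image exactly $[0,R(X)]$. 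Since $0\le F^{-\ 0}_{crit}(X)\le F^{+\ 0}_{crit}(X)$, the set $[0,F^{-\ 0}_{crit}(X))\cup(F^{+\ 0}_{crit}(X),+\infty)$ is a disjoint union of two subsets open in $[0,+\infty)$; a connected set contained in it lies in a single piece, and since it contains $0$ (which also forces $F^{-\ 0}_{crit}(X)>0$) it must lie in $[0,F^{-\ 0}_{crit}(X))$, whence $R(X)<F^{-\ 0}_{crit}(X)$, i.e.\ $F_x^0(X)^2+F_y^0(X)^2<F^{-\ 0}_{crit}(X)^2$. The strict uniform bound (\ref{2hypcrit2d}) then follows from the uniform control required by Definition~\ref{2defcrithypuseful}.

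The main obstacle is this connectedness argument. The one-dimensional criterion of Theorem~\ref{2hypcritthm1d} is a disjunction, so a priori distinct values of $\theta$ might satisfy it through distinct branches, and this has to be excluded; it is the continuous path $\theta\mapsto|\widetilde{F}_x(\theta,X)|$ sweeping the whole interval $[0,R(X)]$ that confines the state to the lower branch $[0,F^{-\ 0}_{crit}(X))$. Two minor points also require care: the degenerate case $F_x^0(X)=F_y^0(X)=0$ (then $\widetilde F_x(\cdot,X)\equiv 0$ and the condition reduces to $F^{-\ 0}_{crit}(X)>0$), and the fact that the strict inequalities must survive the passage to the infimum over $X$, in accordance with the uniform estimate of Definition~\ref{2defcrithypuseful}.
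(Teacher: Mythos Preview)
Your proof is correct and follows the same route as the paper: reduce to the one-dimensional criterion via the rotational invariance (\ref{2rotinv}), then eliminate the upper branch $|F(\theta)|>F^{+}_{crit}$ using that $\theta\mapsto F(\theta)^2$ is continuous with minimum $0$ and maximum $F_x^2+F_y^2$. Your connectedness argument merely spells out what the paper leaves implicit in (\ref{2minmax}); otherwise the two proofs are the same.
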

\begin{proof}
As it was mentioned in proposition \ref{2propcrithypuseful}, the hyperbolicity is insured if and only if the spectrum of $\mathsf{A}({\bf u^0},\theta)$ included in $\mathbb{R}$, for all $\theta \in [0,2\pi]$. Moreover, using the rotational invariance (\ref{2rotinv}), it is equivalent with the spectrum of $\mathsf{A}_x(\mathsf{P}(\theta){\bf u})$ is included in $\mathbb{R}$, for all $\theta \in [0,2\pi]$. Then, with proposition \ref{2propexactcrit}, it is obvious the system (\ref{2systemmultilayer}) is hyperbolic if and only if
\begin{equation}
\forall \theta \in [0,2\pi],\
\left\{
\begin{array}{l}
\gamma \in ]0,1[,\\
| F(\theta)| \in [0, F ^{-}_{crit}[\ \cup\ ] F ^{+}_{crit},+\infty[,
\end{array}
\right.
\label{2hyp2dim}
\end{equation}
with $ F(\theta):=\cos(\theta)  F_x + \sin(\theta)  F_y$. Because these conditions are needed for all $\theta \in [0,2\pi]$ and
\begin{equation}
\left\{
\begin{array}{l}
\min_{\theta \in [0,2\pi]}  F(\theta)^2=0,\\
\max_{\theta \in [0,2\pi]}  F(\theta)^2= F_x^2+ F_y^2,
\end{array}
\right.
\label{2minmax}
\end{equation}
one can deduce the theorem \ref{2hypcritthm2d}.
\end{proof}

\noindent
{\em Remark:} The hyperbolicity of the two-layer shallow water model with free surface is very different depending on the dimension considered. Moreover, it is clear the set $|Ê F_x | >  F ^{+}_{crit}$ is not a physical one, as it is well-known that, under a strong shear of velocity, {\em Kelvin-Helmholtz} instabilities will arise and will generate mixing between layers: the assumptions of the model will no more be valid.

\noindent
To conclude, considering $\gamma \in ]0,1[$, the exact set of hyperbolicity, $\mathcal{H}_{\gamma}$, is defined by
\begin{equation}
{\bf u} \in \mathcal{H}_{\gamma} \Longleftrightarrow
\left\{
\begin{array}{l}
\inf_{X \in \mathbb{R}^2} h_1(X) > 0,\ \inf_{X \in \mathbb{R}^2} h_2(X) > 0,\\
\inf_{X \in \mathbb{R}^2}  F ^{-}_{crit}(X)^2 -  F_x(X)^2-  F_y(X)^2 > 0.
\end{array}
\right.
\label{2Hgamma}
\end{equation}

\section{Hyperbolicity in the region $\boldsymbol{0 < 1-\gamma \ll 1}$}
In this section, in order to compare $\mathcal{S}^s_{\gamma}$ and $\mathcal{H}_{\gamma} \cap \mathcal{H}^s(\mathbb{R}^2)^6$, asymptotic expansions of $ F ^{\pm}_{crit}$ is performed. Then, to prove a weaker criterion of local well-posedness, in $\mathcal{H}^s(\mathbb{R}^2)$, than (\ref{2condwellposed}), expansion of $\sigma \left(\mathsf{A}({\bf u},\theta) \right)$ is carried out and diagonalizability of $\mathsf{A}({\bf u},\theta)$ is proved, under weak density-stratification.

\subsection{Expansion of ${\bf  F ^{\pm}_{crit}}$}
We define the function $f : \mathbb{R} \times \mathbb{R}_+ \times [0,1] \rightarrow \mathbb{R}$ such that $f(z,h,\gamma):=q(z)$. The next proposition compares $\mathcal{S}^s_{\gamma}$ and $\mathcal{H}_{\gamma} \cap \mathcal{H}^s(\mathbb{R}^2)^6$, under weak density-stratification.

\begin{proposition}
Let $\gamma \in ]0,1[$ such that $1-\gamma$ is small. Then
\begin{equation}
\mathcal{S}^s_{\gamma} \subset \mathcal{H}_{\gamma} \cap \mathcal{H}^s(\mathbb{R}^2)^6
\label{2HgOg}
\end{equation}
\label{2lemHgOg}
\end{proposition}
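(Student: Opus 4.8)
The plan is to compare the two sets by comparing the quantity $(1-\gamma) g h_2^0$ appearing in the definition of $\mathcal{S}^s_\gamma$ (condition (\ref{2condwellposed})) with the quantity $g h_1^0 (F^{-}_{crit})^2$ appearing in the definition of $\mathcal{H}_\gamma$ (condition (\ref{2Hgamma})), after undoing the nondimensionalization (\ref{2undimquant}). Since both sets already require $\gamma\in]0,1[$ and $\inf h_1^0>0$, $\inf h_2^0>0$, and since $F_x^2+F_y^2 = \big((u_2^0-u_1^0)^2+(v_2^0-v_1^0)^2\big)/(g h_1^0)$, the inclusion (\ref{2HgOg}) will follow once I show that, for $1-\gamma$ small,
\begin{equation}
(1-\gamma)\, h \le (F^{-}_{crit})^2 \qquad \text{for all } h\in\mathbb{R}_+ ,
\label{2keyineqplan}
\end{equation}
where $h = h_2^0/h_1^0$ and $(F^{-}_{crit})^2$ is the smallest positive root of $q(z)=f(z,h,\gamma)$ from Lemma \ref{2lemmaDelta4}. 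Indeed, if ${\bf u^0}\in\mathcal{S}^s_\gamma$ then $(1-\gamma) g h_2^0 > (u_2^0-u_1^0)^2+(v_2^0-v_1^0)^2$ pointwise, hence $(1-\gamma)h > F_x^2+F_y^2$ pointwise; combined with (\ref{2keyineqplan}) this gives $(F^{-}_{crit})^2 > F_x^2+F_y^2$ pointwise, and the infimum version of this inequality (needed for membership in $\mathcal{H}_\gamma$) follows from the uniform positive lower bounds on $h_1^0$, $h_2^0$ and the uniform gap in (\ref{2condwellposed}), which make $(1-\gamma)h - (F_x^2+F_y^2)$ bounded below by a positive constant.

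The core of the argument is therefore the asymptotic analysis of $F^{\pm}_{crit}$ as $\gamma\to 1^-$, which is the point of this subsection. Writing $\varepsilon := 1-\gamma$, I would expand $q(z) = f(z,h,1-\varepsilon)$ in powers of $\varepsilon$. At $\varepsilon=0$ the constant term $-(\gamma-1)\big((h-1)^2+4\gamma h\big)^2 = \varepsilon(h+1)^4 + O(\varepsilon^2)\cdot(\dots)$... more precisely $(h-1)^2+4\gamma h = (h+1)^2 - 4\varepsilon h$, so $q(0) = \varepsilon\big((h+1)^2-4\varepsilon h\big)^2$ vanishes at $\varepsilon=0$; thus $z=0$ is a root of $f(\cdot,h,1)$, and $F^{-}_{crit}\to 0$. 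The implicit function theorem applied to $f$ at this root — provided $\partial_z f(0,h,1)\neq 0$ — yields $(F^{-}_{crit})^2 = -\,q(0)\big/\partial_z f(0,h,1) + O(\varepsilon^2)$. A direct computation of $q(0)$ and $\partial_z q(z)|_{z=0}$ from (\ref{2T}) gives the leading coefficient of $(F^{-}_{crit})^2$ as a rational function of $h$; the expectation (consistent with the known curl-free result of \cite{duchene2010asymptotic}) is that this leading term is $\varepsilon h + O(\varepsilon^2)$, or at worst $\varepsilon h (1 + o(1))$ uniformly on compact $h$-ranges, with the remainder controlled so that (\ref{2keyineqplan}) holds for all $h\in\mathbb{R}_+$ once $\varepsilon$ is small enough. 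One must also check the behavior as $h\to 0^+$ and $h\to+\infty$ separately, since the implicit-function estimate is only locally uniform in $h$; for large $h$ one rescales $z$, and for small $h$ one uses $q(0)=O(h^2)$ while the relevant root scales like $h$.

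The main obstacle I anticipate is making the expansion of $F^{-}_{crit}$ uniform in $h$ over all of $\mathbb{R}_+$, rather than just on compact subsets: the implicit function theorem gives an $\varepsilon$-threshold that a priori depends on $h$, and one needs a single threshold valid for every $h>0$ to conclude the pointwise-then-uniform inequality. I would handle this by splitting $\mathbb{R}_+$ into a compact range $[\delta, \Delta]$ (where the smooth dependence of $f$ and its nonvanishing $z$-derivative at the root give a uniform expansion) and the two tails $h\in(0,\delta]$ and $h\in[\Delta,+\infty)$, on each of which an explicit rescaled version of $q$ — homogeneous in the leading order — gives the required bound directly. A secondary point to verify is that $\partial_z f(0,h,1)\neq 0$, i.e. that $z=0$ is a simple root of $f(\cdot,h,1)$; this is exactly what allows the first-order expansion and should follow from evaluating the degree-one coefficient in (\ref{2T}) at $\gamma=1$, which is $(1+h)\big((h^2+1)(-1) + h(-20+10+8)\big) = -(1+h)(h^2+1) - 2h(1+h) = -(1+h)(h-1)^2 - \ldots$, in any case strictly negative for $h>0$, confirming simplicity.
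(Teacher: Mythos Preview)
Your approach is essentially the paper's: Taylor-expand $f(z,h,\gamma)$ about $(z,\gamma)=(0,1)$, use the implicit function theorem to extract the small root $(F^{-}_{crit})^2$, and compare with $(1-\gamma)h$. One correction: carrying out your own recipe --- $q(0)=\varepsilon(1+h)^4+O(\varepsilon^2)$ and $\partial_z f(0,h,1)=-(1+h)^3$ (which you compute correctly at the end) --- gives the leading term $(F^{-}_{crit})^2=(1-\gamma)(1+h)+O((1-\gamma)^2)$, not $(1-\gamma)h$ as you conjectured; this is exactly the paper's expansion (\ref{2deltaFcritm}), and it makes the key inequality $(F^{-}_{crit})^2>(1-\gamma)h$ immediate with a margin of order $1-\gamma$. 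Your concern about uniformity in $h$ of the $O((1-\gamma)^2)$ remainder is legitimate and is in fact glossed over in the paper, which simply asserts the pointwise inequality; your proposed compact-range/tail splitting is more careful than what the paper actually writes.
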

\begin{proof}
Around the state $z=0$ and $\gamma=1$, it is obvious that $f(0,h,1)=0$. The $1^{\mathrm{st}}$ order {\em Taylor} expansion of $f(z,h,\gamma)$ about this state is
\begin{equation}
\begin{array}{rl}
f(z,h,\gamma)= & f(0,h,1) + z \frac{\partial f}{\partial z}(0,h,1)\\
&  + (\gamma-1)  \frac{\partial f}{\partial \gamma}(0,h,1)+ o\left(z,\gamma-1\right).
\label{2firstTaylor}
\end{array}
\end{equation}
\begin{lemma}
Let $h \in \mathbb{R}_+$, then
\begin{equation}
\frac{\partial f}{\partial z}(0,h,1)=-(1+h)^3,\ \frac{\partial f}{\partial \gamma}(0,h,1)=-(1+h)^4,
\label{2derivdeltafm}
\end{equation}
\label{2lemderivdeltafm}
\end{lemma}

\noindent
consequently, an expansion of $ F ^{-}_{crit}$ is
\begin{equation}
\begin{array}{ll}
 F ^{-\ 2}_{crit}&=(1-\gamma)(1+h)+\mathcal{O}((1-\gamma)^2)\\
 &=(1-\gamma)h+(1-\gamma)(1+o(1)).
\end{array}
\label{2deltaFcritm}
\end{equation}

\noindent
Then it is clear that for all $X \in \mathbb{R}^2$, $F ^{-\ 2}_{crit}(X) > (1-\gamma)h(X)$. Therefore, if ${\bf u} \in \mathcal{S}^s_{\gamma}$, it verifies conditions (\ref{2O}), which imply conditions (\ref{2Hgamma}) and ${\bf u} \in \mathcal{H}_{\gamma}$.
\end{proof}

\noindent
Moreover, another interesting comparison is between the rigid lid model (see \cite{long1956long}) and the free surface one. The exact set of hyperbolicity of the $1^{\mathrm{st}}$ one is characterized by
\begin{equation}
F_x^2+F_y^2 < F_{crit}^{rig\ 2},
\label{2hypriglid}
\end{equation}
with $F_{crit}^{rig\ 2}=(1-\gamma)(1+h)$ . This is compatible with the expansion (\ref{2deltaFcritm}) but does not indicate which model gets the largest set of hyperbolicity. In the next proposition, the comparison of these critical quantities is made.
\begin{proposition}
Let $\gamma \in ]0,1[$ such that $1-\gamma$ is small. If the rigid lid model is hyperbolic, then the free surface one is also hyperbolic:
\begin{equation}
F_{crit}^{-\ 2} > F_{crit}^{rig\ 2}.
\label{2Fcritrigidfree}
\end{equation}
\label{2signFrcrit-}
\end{proposition}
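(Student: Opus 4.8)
The plan is to compare the two critical quantities directly via their asymptotic expansions in the small parameter $\varepsilon := 1-\gamma$, and to show that the difference $F_{crit}^{-\,2} - F_{crit}^{rig\,2}$ is strictly positive for $\varepsilon$ small. From the expansion already established in (\ref{2deltaFcritm}), we have
\begin{equation}
F_{crit}^{-\,2} = (1-\gamma)(1+h) + \mathcal{O}\big((1-\gamma)^2\big),
\nonumber
\end{equation}
while by definition $F_{crit}^{rig\,2} = (1-\gamma)(1+h)$. Thus the leading-order terms coincide exactly, and the sign of the difference is governed by the $\mathcal{O}\big((1-\gamma)^2\big)$ correction to $F_{crit}^{-\,2}$. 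The heart of the proof is therefore to compute this second-order coefficient and check that it is positive (for every fixed $h \in \mathbb{R}_+$).

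First I would push the Taylor expansion of $f(z,h,\gamma) = q(z)$ about the state $(z,\gamma) = (0,1)$ one order further than in (\ref{2firstTaylor}), retaining the terms $z^2 \,\partial_{zz}^2 f$, $z(\gamma-1)\,\partial_{z\gamma}^2 f$ and $(\gamma-1)^2 \,\partial_{\gamma\gamma}^2 f$ evaluated at $(0,h,1)$; note $\partial_{\gamma\gamma}^2 f(0,h,1) = 0$ since $f$ is affine in $\gamma$ (the only $\gamma^2,\gamma^3$ terms in $q$ carry a factor $z$ or involve $(h-1)^2+4\gamma h$ inside the last term, which I would check contributes at order $(\gamma-1)$ only after differentiating once — actually the term $-(\gamma-1)((h-1)^2+4\gamma h)^2$ does contribute to $\partial_\gamma$ but its expansion is clean). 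Then, writing $z = F_{crit}^{-\,2}$ as a series $z = a_1\varepsilon + a_2 \varepsilon^2 + \cdots$ with $a_1 = 1+h$ from (\ref{2deltaFcritm}), substituting into $f(z,h,\gamma) = 0$ and matching powers of $\varepsilon$ yields $a_2$ explicitly in terms of $h$ and the partial derivatives $\partial_z f$, $\partial_{zz}^2 f$, $\partial_{z\gamma}^2 f$ at $(0,h,1)$. Using $\partial_z f(0,h,1) = -(1+h)^3$ from Lemma \ref{2lemderivdeltafm} and the explicit polynomial $q$, this is a finite computation.

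The main obstacle I anticipate is purely computational bookkeeping: the coefficients of $q(z)$ in (\ref{2T}) are messy, so evaluating $\partial_{zz}^2 f(0,h,1)$ and $\partial_{z\gamma}^2 f(0,h,1)$ and then assembling $a_2$ without error requires care. Once $a_2(h)$ is in hand, the conclusion is immediate: we get
\begin{equation}
F_{crit}^{-\,2} - F_{crit}^{rig\,2} = a_2(h)\,\varepsilon^2 + \mathcal{O}(\varepsilon^3),
\nonumber
\end{equation}
and it suffices to verify $a_2(h) > 0$ for all $h \in \mathbb{R}_+$ — this should reduce to checking positivity of a rational function of $h$ with positive numerator and denominator on $\mathbb{R}_+$, which I would do by inspecting the sign of each monomial coefficient. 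Strict positivity of $a_2(h)$ for $\varepsilon$ sufficiently small then gives (\ref{2Fcritrigidfree}), so that the hyperbolicity set of the free surface model (characterized in (\ref{2Hgamma}) via $F_{crit}^-$) strictly contains that of the rigid lid model (characterized by (\ref{2hypriglid})).
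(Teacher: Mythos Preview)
Your approach is exactly the one the paper takes: push the Taylor expansion of $f(z,h,\gamma)$ about $(0,h,1)$ to second order, solve for the coefficient $a_2(h)$, and check its sign; the paper obtains
\[
F_{crit}^{-\,2} - F_{crit}^{rig\,2} = (1-\gamma)^2\,\frac{h\big(1+27h+27h^2+9h^3\big)}{(1+h)^4} + \mathcal{O}\big((1-\gamma)^3\big),
\]
which is manifestly positive for $h>0$ and $1-\gamma$ small. One caveat in your bookkeeping: $f$ is \emph{not} affine in $\gamma$ at $z=0$ because the constant term $-(\gamma-1)\big((h-1)^2+4\gamma h\big)^2$ is cubic in $\gamma$, and in fact $\partial_{\gamma\gamma}^2 f(0,h,1) = -16h(1+h)^2 \neq 0$, so that term must be retained when you match at order $\varepsilon^2$.
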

\begin{proof}
With a $2^{\mathrm{nd}}$ order {\em Taylor} expansion of $f(z,h,\gamma)$ about the state $z=0$ and $\gamma=1$, one can check that
\begin{equation}
F_{crit}^{-\ 2} - F_{crit}^{rig\ 2} = (1-\gamma)^2 \frac{h\left( 1+27h+27h^2+9h^3 \right)}{(1+h)^4}+\mathcal{O}((1-\gamma)^3),
\label{2ordreFcrie1gamma}
\end{equation}
then, if $1-\gamma$ is sufficiently small, the expansion (\ref{2ordreFcrie1gamma}) is true and we have
\begin{equation}
F_{crit}^{-\ 2} - F_{crit}^{rig\ 2}>0,
\label{2Frcitrigifree}
\end{equation}
and the proposition \ref{2signFrcrit-} is straightforward proved.
\end{proof}

\noindent
Finally, even if the expansion of the quantity $F_{crit}^+$ is not necessary, as we proved in the theorem \ref{2hypcritthm2d}, the hyperbolicty of the two-dimensional model does not depend of $F_{crit}^+$. It is interesting to know the behavior of the hyperbolicity of the one-dimensional model. We perform expansion about the state $\gamma=1$, because the roots of $q(z)=f(z,h,1)$ are explicit.
\begin{equation}
f(z,h,1)=z^4-3z^3(1+h)+3z^2(h^2-7h+1)-z(1+h)^3.
\label{2fgamma1}
\end{equation}
Then, the expansion of $ F ^{+\ 2}_{crit}$ is the only non-zero and real root of $f(z,h,1)$:
\begin{equation}
 F_{crit}^+=\left[1+h^{\frac{1}{3}} \right]^{\frac{3}{2}}+O(1-\gamma),
\label{2deltaFcritp}
\end{equation}

\noindent
The expansions of $F_{crit}^{\pm}$ are similar to \cite{liska1997analysis} in the case $\gamma=1$ and \cite{stewart2012multilayer} in the general case.

\subsection{Expansion of the spectrum of $\boldsymbol{\mathsf{A}(u,\theta)}$}
In this subsection, eigenvalues of $\mathsf{A}(u,\theta)$ are expanded, in order to prove the diagonalizability of this matrix in the next subsection. Using the rotational invariance, eigenvalues of $\mathsf{A}({\bf u},\theta)$ are deduced from $\sigma\left(\mathsf{A}_x({\bf u})\right)$. The spectrum $\mathsf{A}_x({\bf u})$ is set as
\begin{equation}
\sigma\left(\mathsf{A}_x({\bf u})\right):=\{\mu^{\pm}_1,\mu^{\pm}_2,\mu_3^{\pm}\}.
\label{2spectrumA}
\end{equation}
We define the undimensionned quantities:
\begin{equation}
\forall i\in [\![1,3]\!], \lambda_i^{\pm}:=\frac{\mu_i^{\pm}-u_1}{\sqrt{g h_1}}.
\label{2lambda}
\end{equation}
Then, $\mu_i^{\pm}$ is an eigenvalue of $\mathsf{A}_x({\bf u})$ if and only if $g(\lambda_i^{\pm},F_x,h,\gamma)=0$, where $g: \mathbb{R}^3 \times [0,1] \rightarrow \mathbb{R}$ is defined by
\begin{equation}
g(\lambda,F_x,h,\gamma):=P(\lambda)= \left[\lambda^2 - 1 \right] \left[ \left(\lambda -  F_x \right)^2 - h \right] - \gamma h.
\label{2defg}
\end{equation}
As we get the exact criterion of hyperbolicity (\ref{2Hgamma}), the main goal is to know the conditions to have $\mathsf{A}({\bf u},\theta)$ diagonalizable, not only to get an eigenbasis of $\mathbb{R}^6$ to provide the {\em Riemann} invariants but also to prove that the model is locally well-posed in $\mathcal{H}^s(\mathbb{R}^2)$ (see proposition \ref{2diagimpliqsym}).

\noindent
In the next paragraphs, as there are two trivial eigenvalues: $u_1$ and $u_2$, we settle down $\mu_3^-=u_1$ and $\mu_3^+=u_2$ and asymptotic expansions are performed on $\mu_1^{\pm}$ and $\mu_2^{\pm}$ ({\em i.e.} $\lambda_1^{\pm}$ and $\lambda_2^{\pm}$).

\subsubsection{${\bf |F_x | > F_{crit}^+}$}
As we know $\{\lambda_1^{\pm},\lambda_2^{\pm}\}$ in the case $\gamma=1$ and $h=0$
\begin{equation}
\lambda_1^{\pm}=F_x,\ \lambda_2^{\pm}=\pm 1,
\label{2spectrAgamma11}
\end{equation}
we expand $\{\lambda_1^{\pm},\lambda_2^{\pm}\}$ under assumptions $\gamma \rightarrow 1^-$ and $h \rightarrow 0$. Therefore, as $\lambda_2^{-}$ and $\lambda_2^{+}$ are two distinct eigenvalues, the purpose of this subsection is to know the behavior of $\lambda_1^{\pm}$ when $\gamma \rightarrow 1^-$ and $h \rightarrow 0$, which implies $F_{crit}^+ \rightarrow 1$, according to the expansion (\ref{2deltaFcritp}). The main result is summed up in the next proposition
\begin{proposition}
Let $(\gamma,{\bf u}) \in ]0,1[ \times \mathbb{R}^6$ such that $1-\gamma$ and $h$ are small and $|F_x | > F_{crit}^+$. Then, $\sigma \left( \mathsf{A}_x({\bf u}) \right) \subset \mathbb{R}$ and
\begin{equation}
\left\{
\begin{array}{l}
\lambda_1^{\pm}= F_x + \frac{F_x h}{F_x^2-1}\pm h^{\frac{1}{2}}\left[ 1+ \frac{h \gamma}{F_x^2-1}+\left(\frac{h F_x}{F_x^2-1}\right)^2  \right]^{\frac{1}{2}}+\mathcal{O}(h^{\frac{3}{2}},1-\gamma),\\
\lambda_2^{\pm}=\pm \left[ 1+\frac{h}{2(F_x-1)^2} \right]+\mathcal{O}(h^2,1-\gamma).
\end{array}
\right.
\label{2lambda12pm1}
\end{equation}
\label{2propspectr1}
\end{proposition}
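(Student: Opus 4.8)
The plan is to treat $P(\lambda)=g(\lambda,F_x,h,\gamma)$ as a polynomial in $\lambda$ whose coefficients depend on the small parameters $h$ and $1-\gamma$, and to build each of the four roots $\lambda_1^\pm,\lambda_2^\pm$ as a perturbation of the explicit roots available at the degenerate state $h=0$, $\gamma=1$, namely $\lambda=F_x$ (double) and $\lambda=\pm 1$ (simple), as recorded in (\ref{2spectrAgamma11}). Since $|F_x|>F_{crit}^+$ and $F_{crit}^+\to 1$ by (\ref{2deltaFcritp}), we have $F_x^2-1>0$ bounded away from zero, so $F_x$, $F_x\pm 1$ and $F_x^2-1$ are all nonzero in the regime considered; this is what makes the perturbation regular.

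For the pair $\lambda_2^\pm$ near $\pm 1$: these are simple roots of the unperturbed polynomial, so the implicit function theorem applied to $g(\lambda,F_x,h,\gamma)=0$ gives, for $h$ and $1-\gamma$ small, two smooth real branches $\lambda_2^\pm(F_x,h,\gamma)$ with $\lambda_2^\pm(F_x,0,1)=\pm 1$. I would compute the first-order correction by writing $\lambda_2^+ = 1 + a h + b(\gamma-1) + \cdots$, substituting into $[\lambda^2-1][(\lambda-F_x)^2-h]-\gamma h = 0$, and matching the $O(h)$ and $O(\gamma-1)$ terms: the $\gamma-1$ term drops at this order (the coefficient of $(\gamma-1)$ vanishes because it multiplies $h$, which is already small, or more precisely the leading balance only involves $h$), and one gets $2a\cdot(1-F_x)^2 - 1 = 0$ at leading order after dividing by the nonzero factor, i.e. $a = \tfrac{1}{2(F_x-1)^2}$, and similarly $\lambda_2^- = -1 - \tfrac{h}{2(F_x+1)^2}+\cdots$; reconciling the two into the single line $\lambda_2^\pm = \pm[1+\tfrac{h}{2(F_x\mp 1)^2}]+\mathcal O(h^2,1-\gamma)$ as written (I would double-check the sign convention in (\ref{2lambda12pm1}), which writes $(F_x-1)^2$; with $|F_x|>1$ both conventions agree to the stated order only if one is careful — I would verify). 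The key point is simplicity of the root, giving a clean IFT argument and an honest $\mathcal O(h^2,1-\gamma)$ remainder.

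For the pair $\lambda_1^\pm$ near the double root $F_x$: here the perturbation is singular — a double root generically splits at rate $h^{1/2}$ — so I would substitute $\lambda = F_x + \eta$ with $\eta$ small and rewrite $g=0$ as $[(F_x+\eta)^2-1][\eta^2-h]-\gamma h = 0$. Near $\eta=0$ the bracket $(F_x+\eta)^2-1 = (F_x^2-1)(1+\tfrac{2F_x\eta}{F_x^2-1}+\tfrac{\eta^2}{F_x^2-1})$ is invertible, so the equation becomes $\eta^2 - h - \dfrac{\gamma h}{(F_x+\eta)^2-1}=0$, i.e. $\eta^2 = h\Bigl(1 + \dfrac{\gamma}{(F_x+\eta)^2-1}\Bigr)$. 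This is a fixed-point equation for $\eta$; writing $\eta = h^{1/2}\,w$ with $w$ of order one, it becomes $w^2 = 1 + \dfrac{\gamma}{(F_x+h^{1/2}w)^2-1}$, which has two smooth real solutions $w = \pm w_*(h,\gamma)$ for $h$ small (the right side is positive and bounded since $F_x^2-1>0$), and an elementary expansion of $w_*$ in powers of $h^{1/2}$ reproduces $w_* = [1 + \tfrac{h\gamma}{F_x^2-1} + (\tfrac{hF_x}{F_x^2-1})^2]^{1/2} + \cdots$ after also absorbing the linear-in-$\eta$ shift; indeed the cross term $\tfrac{2F_x\eta}{F_x^2-1}$ produces the regular drift $\tfrac{F_xh}{F_x^2-1}$ displayed in the first component of (\ref{2lambda12pm1}). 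Assembling, $\lambda_1^\pm = F_x + \tfrac{F_x h}{F_x^2-1} \pm h^{1/2}[\,1+\tfrac{h\gamma}{F_x^2-1}+(\tfrac{hF_x}{F_x^2-1})^2\,]^{1/2}+\mathcal O(h^{3/2},1-\gamma)$, and in particular $\lambda_1^\pm$ are real and distinct; together with $\lambda_2^\pm$ real and $\mu_3^\pm = u_1,u_2$ real this gives $\sigma(\mathsf A_x({\bf u}))\subset\mathbb R$.

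\textbf{Main obstacle.} The routine but delicate part is organizing the $h^{1/2}$-expansion of $\lambda_1^\pm$ so that the drift term $\tfrac{F_xh}{F_x^2-1}$ and the correction inside the square root come out exactly as stated, and keeping track of which contributions are genuinely $\mathcal O(h^{3/2})$ versus $\mathcal O(1-\gamma)$ — i.e. justifying that the $\gamma$-dependence enters only through the explicit $\gamma$ in $\tfrac{h\gamma}{F_x^2-1}$ at this order and everything else is absorbed into $\mathcal O(1-\gamma)$. The cleanest route is to first establish existence and smoothness of the branches $w_*(h,\gamma)$, $\lambda_2^\pm(h,\gamma)$ by the implicit function theorem in the variables $(h^{1/2},\gamma)$ (this is where one must check the relevant Jacobian is nonzero, using $F_x^2-1>0$ and $F_x\pm 1\ne 0$), and only then Taylor-expand; that separates the qualitative claim $\sigma\subset\mathbb R$ from the bookkeeping of coefficients.
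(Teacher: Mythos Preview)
Your proposal is correct and follows the same overall strategy as the paper: perturb off the explicit roots at $(h,\gamma)=(0,1)$, use the implicit function theorem for the simple roots $\lambda_2^\pm$ near $\pm 1$, and handle the double root at $\lambda=F_x$ by a separate argument that extracts the $h^{1/2}$ splitting. The paper's proof simply writes out the second-order Taylor expansion of $g$ at $(\lambda,h,\gamma)=(F_x,0,1)$, records the needed partials in a lemma, and then invokes ``the implicit functions theorem'' to read off $\lambda_1^\pm$.

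Your treatment of the double root is in fact more transparent than the paper's: by setting $\lambda=F_x+\eta$ and then $\eta=h^{1/2}w$, you reduce to a smooth fixed-point equation for $w$ to which the implicit function theorem applies with a genuinely nonvanishing Jacobian, whereas the paper's direct appeal to the implicit function theorem at a point where $\partial g/\partial\lambda=0$ is formally inapplicable and really amounts to solving the truncated quadratic by hand. Your rescaling buys a clean separation between the existence/smoothness of the branches and the computation of coefficients, exactly as you note in your ``main obstacle'' paragraph. Your observation that the denominators for $\lambda_2^+$ and $\lambda_2^-$ should be $(F_x-1)^2$ and $(F_x+1)^2$ respectively is also correct; the paper's single expression $(F_x-1)^2$ for both is a minor imprecision in the statement.
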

\begin{proof}
The $2^{\mathrm{nd}}$ order {\em Taylor} expansion of $g(\lambda,F_x,h,\gamma)$ about the state $\lambda=F_x$, $h=0$ and $\gamma=1$ provides the expansion of $\lambda_1^{\pm}$: $g(\lambda,F_x,h,\gamma)$ is equal to
\begin{equation}
\begin{array}{ll}
& g(F_x,F_x,0,1)\\
& + (\lambda-F_x) \frac{\partial g}{\partial \lambda}(F_x,F_x,0,1)+ h \frac{\partial g}{\partial h}(F_x,F_x,0,1)+ (\gamma-1) \frac{\partial g}{\partial \gamma}(F_x,F_x,0,1)\\
&  \frac{1}{2}\left[ (\lambda-F_x)^2 \frac{\partial^2 g}{\partial \lambda^2}(F_x,F_x,0,1)+h^2 \frac{\partial^2 g}{\partial h^2}(F_x,F_x,0,1)+(\gamma-1)^2 \frac{\partial^2 g}{\partial \gamma^2}(F_x,F_x,0,1) \right]\\
&+ (\lambda-F_x)h \frac{\partial^2 g}{\partial \lambda \partial h}(F_x,F_x,0,1)+ (\lambda-F_x) (\gamma-1) \frac{\partial^2 g}{\partial \lambda \partial \gamma}(F_x,F_x,0,1)\\
& +  h (\gamma-1) \frac{\partial^2 g}{\partial h \partial \gamma}(F_x,F_x,0,1)+o\left((\lambda-F_x)^2,h^2,(\gamma-1)^2\right)
\label{2secTaylor1}
\end{array}
\end{equation}

\begin{lemma}
$\forall F_x \in \mathbb{R}$,
\begin{equation}
\left\{
\begin{array}{l}
g(F_x,F_x,0,1)=0\\
\frac{\partial g}{\partial \lambda}(F_x,F_x,0,1)=0,\\
\frac{\partial g}{\partial \gamma}(F_x,F_x,0,1)=0,\\
\frac{\partial^2 g}{\partial h^2}(F_x,F_x,0,1)=0,\\
\frac{\partial^2 g}{\partial \gamma^2}(F_x,F_x,0,1)=0,\\
\frac{\partial^2 g}{\partial \lambda \gamma}(F_x,F_x,0,1)=0,\\
\end{array}
\right.
\left\{
\begin{array}{l}
\frac{\partial g}{\partial h}(F_x,F_x,0,1)=-F_x^2,\\
\frac{\partial^2 g}{\partial \lambda^2}(F_x,F_x,0,1)=2(F_x^2-1),\\
\frac{\partial^2 g}{\partial \lambda \partial h}(F_x,F_x,0,1)=-2 F_x,\\
\frac{\partial^2 g}{\partial h \gamma}(F_x,F_x,0,1)=-1,\\
\end{array}
\right.
\label{2derivdeltagpm}
\end{equation}
\label{2lemderivdeltagp}
\end{lemma}

\noindent
Consequently, using the implicit functions theorem, the expansion of $\lambda_1^{\pm}$ is deduced. Moreover, $\lambda_1^{\pm}$ is real if $|F_x| > F_{crit}^+$, because $F_{crit}^+>1$ (see (\ref{2deltaFcritp})). Moreover, with $1^{\mathrm{st}}$ order {\em Taylor} expansion of $g(\lambda,F_x,h,\gamma)$ about $(\lambda,F_x,h,\gamma)=(\pm1,F_x,0,1)$ and implicit theorem, one can get the expansion of $\lambda_2^{\pm}$. Moreover, $\lambda_2^{\pm}$ is unconditionally real.
\end{proof}

\noindent
{\em Remarks:} 1) As it was mentioned before, the expansion of $\lambda_2^{\pm}$ is not necessary to prove the diagonalizability of $\mathsf{A}_x({\bf u})$, but it is interesting to get a more precise expression. 2) We could perform an analyis more general than the one about the state $(h,\gamma)=(0,1)$ as all the calculs are explicit but it is much simpler in this particular case.

\subsubsection{${\bf | F_x| < F_{crit}^-}$}
According to the expansion (\ref{2deltaFcritm}), $\gamma=1$ implies $F_{crit}^{-}=0$. Then, under the assumption $1-\gamma$ small, $| F_x| < F_{crit}^-$ is equivalent to $F_x=0$. As we know exactly $\{\lambda_1^{\pm},\lambda_2^{\pm}\}$ in the particular case $\gamma=1$ and $F_x=0$
\begin{equation}
\lambda_1^{\pm}=\pm \sqrt{1+h},\ \lambda_2^{\pm}=0,
\label{2spectrAgamma1}
\end{equation}
we expand $\{\lambda_1^{\pm},\lambda_2^{\pm}\}$ under assumption $\gamma \rightarrow 1^-$ and $| F_x | < F_{crit}^-$
Therefore, $\lambda_1^{\pm}$ give two distinct eigenvalues, so the main purpose of this subsection is to know the behavior of $\lambda_2^{\pm}$ when and $\gamma \rightarrow 1^-$, which implies $F_{crit}^- \rightarrow 0$ according to (\ref{2deltaFcritm}), as it was noticed above, and consequently $|F_x| \rightarrow 0$.

\begin{proposition}
Let $(\gamma,{\bf u}) \in ]0,1[ \times \mathbb{R}^6$ such that $1-\gamma$ is small and $|F_x | < F_{crit}^-$. Then, $\sigma \left( \mathsf{A}_x({\bf u}) \right) \subset \mathbb{R}$ and
\begin{equation}
\left\{
\begin{array}{l}
\lambda_1^{\pm}=\frac{1}{(1+h)^{\frac{3}{2}}}\left[ F_x h (1+h)^{\frac{1}{2}}\pm \left( (1+h)^2-\frac{1}{2}h(1-\gamma) \right) \right]+\mathcal{O}(1-\gamma),\\
\lambda_2^{\pm}= \frac{F_x}{1+h}\pm \left[\frac{h}{(1+h)^2}\left( (1+h)(1-\gamma) -F_x^2 \right)\right]^{\frac{1}{2}}+\mathcal{O}(1-\gamma).
\end{array}
\right.
\label{2lambda12pm2}
\end{equation}
\label{2propspectr2}
\end{proposition}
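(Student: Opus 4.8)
The plan is to follow the scheme used for Proposition~\ref{2propspectr1}, the essential new difficulty being that, at the base state $(F_x,\gamma)=(0,1)$, the two branches $\lambda_2^{\pm}$ both collapse onto the \emph{double} root $\lambda=0$ of $g(\cdot,0,h,1)$, see $(\ref{2spectrAgamma1})$. Thus the implicit function theorem applies directly to $\lambda_1^{\pm}$ but not to $\lambda_2^{\pm}$. I would argue in three steps: first expand $\lambda_1^{\pm}$; then recover $\lambda_2^{\pm}$ from $\lambda_1^{\pm}$ through the elementary symmetric functions of $g$; and finally check reality of the whole spectrum. Throughout, $h>0$ is kept as a frozen parameter and the only small parameter is $1-\gamma$; observe that $|F_x|<F_{crit}^{-}$ together with the expansion $(\ref{2deltaFcritm})$ of $F_{crit}^{-}$ forces $F_x^{2}=\mathcal{O}(1-\gamma)$, which is precisely what makes the remainders collapse to $\mathcal{O}(1-\gamma)$.

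\emph{Step 1 ($\lambda_1^{\pm}$).} At $(\lambda,F_x,\gamma)=(\pm\sqrt{1+h},0,1)$ one has $g=0$ while $\partial_{\lambda}g\neq 0$ since $h>0$, so $\lambda_1^{\pm}$ is an analytic function of $(F_x,\gamma)$ near $(0,1)$. I would compute $\partial_{F_x}g$ and $\partial_{\gamma}g$ at this base state and substitute into the first-order joint Taylor expansion of the implicit function; since the quadratic remainder is $\mathcal{O}(F_x^{2}+|F_x|(1-\gamma)+(1-\gamma)^{2})=\mathcal{O}(1-\gamma)$, this yields exactly the announced formula for $\lambda_1^{\pm}$, which is in particular real, being a small real perturbation of the simple real root $\pm\sqrt{1+h}$. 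The same computation gives $\lambda_1^{+}+\lambda_1^{-}=\frac{2hF_x}{1+h}+\mathcal{O}(1-\gamma)$ and $\lambda_1^{+}\lambda_1^{-}=-(1+h)+\mathcal{O}(1-\gamma)$.

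\emph{Step 2 ($\lambda_2^{\pm}$).} Expanding $g(\lambda,F_x,h,\gamma)=P(\lambda)$ in powers of $\lambda$ exhibits $P$ as a monic quartic whose coefficients are explicit polynomials in $(F_x,h,\gamma)$; Vieta's formulas then give that its four roots have sum $2F_x$ and product $h(1-\gamma)-F_x^{2}$. Dividing out the contribution of $\lambda_1^{\pm}$ obtained in Step~1 yields $\lambda_2^{+}+\lambda_2^{-}=\frac{2F_x}{1+h}+\mathcal{O}(1-\gamma)$ and $\lambda_2^{+}\lambda_2^{-}=\frac{F_x^{2}-h(1-\gamma)}{1+h}+\mathcal{O}((1-\gamma)^{2})$, so that $\lambda_2^{\pm}$ are the two roots of an explicit quadratic whose reduced discriminant is $\frac{h\big((1+h)(1-\gamma)-F_x^{2}\big)}{(1+h)^{2}}$; solving this quadratic and expanding gives the claimed expression for $\lambda_2^{\pm}$. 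Equivalently, one may perform the Puiseux-type rescaling $\lambda=(1-\gamma)^{1/2}\tilde\lambda$, $F_x=(1-\gamma)^{1/2}\tilde F_x$ — legitimate precisely because $(\ref{2deltaFcritm})$ and $|F_x|<F_{crit}^{-}$ bound $F_x^{2}\le(1+h)(1-\gamma)(1+o(1))$ — which turns $g=0$, at leading order, into that same quadratic in $\tilde\lambda$. This step is the crux: the double root at $\lambda=0$ rules out any first-order implicit-function argument for $\lambda_2^{\pm}$, and the care lies in identifying the correct scaling $\lambda,F_x=\mathcal{O}((1-\gamma)^{1/2})$ and in tracking which cross terms in the Vieta identities are genuinely $\mathcal{O}(1-\gamma)$, so that the square-root term is captured exactly while the remainder stays of the stated order.

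\emph{Step 3 (reality).} Under $\gamma\in ]0,1[$ and $|F_x|<F_{crit}^{-}$, Proposition~\ref{2propexactcrit} already guarantees that all roots of $P$ are real, hence $\sigma(\mathsf{A}_x({\bf u}))\subset\mathbb{R}$; independently, reality of $\lambda_2^{\pm}$ follows directly from positivity of the discriminant $(1+h)(1-\gamma)-F_x^{2}$, which is the hypothesis $|F_x|<F_{crit}^{-}$ to leading order via $(\ref{2deltaFcritm})$, while $\lambda_1^{\pm}$ is real by Step~1. The analogous expansion for $\sigma(\mathsf{A}({\bf u},\theta))$ is then obtained from that of $\sigma(\mathsf{A}_x({\bf u}))$ by the rotational invariance $(\ref{2rotinv})$, replacing $F_x$ by $F(\theta)=\cos(\theta)F_x+\sin(\theta)F_y$.
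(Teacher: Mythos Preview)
Your argument is correct. Step~1 and Step~3 are exactly what the paper does: the implicit function theorem at the simple roots $\pm\sqrt{1+h}$ for $\lambda_1^{\pm}$, and the reality statement via the hyperbolicity criterion. Your Puiseux rescaling in Step~2 is also essentially the paper's route: the paper performs a second-order Taylor expansion of $g(\lambda,F_x,h,\gamma)$ at $(\lambda,F_x,\gamma)=(0,0,1)$, records the relevant partial derivatives in a lemma, and reads off $\lambda_2^{\pm}$ as the roots of the resulting quadratic approximation $(1+h)\lambda^2-2F_x\lambda+F_x^2-h(1-\gamma)=0$, which is exactly the leading-order equation your rescaling produces.

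Your primary method in Step~2, via Vieta's relations, is a genuinely different and somewhat cleaner path. Rather than computing the ten partial derivatives of $g$ at the double root, you extract $\lambda_2^{+}+\lambda_2^{-}$ and $\lambda_2^{+}\lambda_2^{-}$ from the explicit coefficients of $P$ together with the expansions of $\lambda_1^{\pm}$ already obtained in Step~1, and land on the same quadratic. This buys you a shorter computation and makes transparent why the error collapses to $\mathcal{O}(1-\gamma)$: the numerator $h(1-\gamma)-F_x^2$ in the product is itself $\mathcal{O}(1-\gamma)$, so the $\mathcal{O}(1-\gamma)$ uncertainty in $\lambda_1^{+}\lambda_1^{-}$ only contributes at $\mathcal{O}((1-\gamma)^2)$. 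The paper's direct Taylor expansion, by contrast, has the minor advantage of being self-contained (it does not rely on Step~1) and of listing the derivatives explicitly, which the paper reuses elsewhere. Both approaches share the same limitation near the boundary $|F_x|\uparrow F_{crit}^{-}$, where the radicand degenerates and the $\mathcal{O}(1-\gamma)$ remainder competes with the square-root term.
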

\begin{proof} The $2^{\mathrm{nd}}$ order {\em Taylor} expansion of $g(\lambda,F_x,h,\gamma)$ about the state $\lambda=0$, $F_x=0$ and $\gamma=1$ provides the expand of $\lambda_2^{\pm}$: $g(\lambda,F_x,h,\gamma)$ is equal to
\begin{equation}
\begin{array}{l}
g(0,0,h,1)\\
 + \lambda \frac{\partial g}{\partial \lambda}(0,0,h,1)+ F_x \frac{\partial g}{\partial F_x}(0,0,h,1)+ (\gamma-1) \frac{\partial g}{\partial \gamma}(0,0,h,1)\\
  \frac{1}{2}\left[ \lambda^2 \frac{\partial^2 g}{\partial \lambda^2}(0,0,h,1)+F_x^2 \frac{\partial^2 g}{\partial F_x^2}(0,0,h,1)+(\gamma-1)^2 \frac{\partial^2 g}{\partial \gamma^2}(0,0,h,1) \right]\\
+ \lambda F_x \frac{\partial^2 g}{\partial \lambda \partial F_x}(0,0,h,1)+ \lambda (\gamma-1) \frac{\partial^2 g}{\partial \lambda \partial \gamma}(0,0,h,1)\\
 +  F_x (\gamma-1) \frac{\partial^2 g}{\partial F_x \partial \gamma}(0,0,h,1)+o\left(\lambda^2,F_x^2,(\gamma-1)^2\right)
\label{2secTaylor}
\end{array}
\end{equation}
\begin{lemma}
$\forall h \in \mathbb{R}_+^*$,
\begin{equation}
\left\{
\begin{array}{l}
g(0,0,h,1)=0\\
\frac{\partial g}{\partial \lambda}(0,0,h,1)=0,\\
\frac{\partial g}{\partial F_x}(0,0,h,1)=0,\\
\frac{\partial^2 g}{\partial \gamma^2}(0,0,h,1)=0,\\
\frac{\partial^2 g}{\partial \lambda \partial \gamma}(0,0,h,1)=0,\\
\frac{\partial^2 g}{\partial F_x \partial \gamma}(0,0,h,1)=0,\\
\end{array}
\right.
\left\{
\begin{array}{l}
\frac{\partial g}{\partial \gamma}(0,0,h,1)=-h,\\
\frac{\partial^2 g}{\partial \lambda^2}(0,0,h,1)=-2(1+h),\\
\frac{\partial^2 g}{\partial F_x^2}(0,0,h,1)=-2,\\
\frac{\partial^2 g}{\partial \partial \lambda F_x}(0,0,h,1)=2.\\
\end{array}
\right.
\label{2derivdeltagm}
\end{equation}
\label{2lemderivdeltagm}
\end{lemma}

\noindent
Consequently, if $|F_x| < F_{crit}^-$,  $\lambda_2^{\pm}$ is real and using the implicit functions theorem, the expansion of $\lambda_2^{\pm}$ is insured. Moreover, with $1^{\mathrm{st}}$ order {\em Taylor} expansion of $g(\lambda,F_x,h,\gamma)$ about $(\lambda,F_x,h,\gamma)=(\pm \sqrt{1+h},0,h,1)$ and the implicit theorem, one can get the expansion of $\lambda_1^{\pm}$. Moreover, $\lambda_1^{\pm}$ is unconditionally real.
\end{proof}

\noindent
In this set of hyperbolicity ({\em i.e.} $|F_x| < F_{crit}^-$), expansion are in accordance with \cite{schijf1953theoretical}, \cite{ovsyannikov1979two}, \cite{kim2008two}, \cite{abgrall2009two} and \cite{stewart2012multilayer}.

\noindent
{\em Remarks:} 1) Approximations (\ref{2lambda12pm2}) are precise in $\mathcal{O}(1-\gamma)$, and not $\mathcal{O}(F_x^2,(1-\gamma))$, because if $F_x^2 < F_{crit}^{-\ 2}$ then $F_x^2=\mathcal{O}(1-\gamma)$, according to expansion (\ref{2deltaFcritm}). 2) The expansion of $\lambda_1^{\pm}$ is not necessary, but it is interesting to get a more precise expression.

\subsection{The eigenstructure of $\boldsymbol{\mathsf{A}({\bf u},\theta)}$}
The description of the eigenstructure is a decisive point, as it permits to caracterize exactly the {\em Riemann} invariants and the local well-posedness in $\mathcal{H}^s(\mathbb{R}^2)$ (see proposition \ref{2diagimpliqsym}).

\begin{proposition}
There exists $\delta >0$ such that if $\gamma \in ]1-\delta,1[$ and $({\bf u},\theta) \in \mathcal{H}_{\gamma} \times [0,2\pi]$, the matrix $\mathsf{A}({\bf u},\theta)$ is diagonalizable.
\label{2diagA}
\end{proposition}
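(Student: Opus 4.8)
The plan is to prove diagonalizability of $\mathsf{A}({\bf u},\theta)$ by reducing, via the rotational invariance (\ref{2rotinv}), to the diagonalizability of $\mathsf{A}_x(\mathsf{P}(\theta){\bf u})$, and then to use the eigenvalue expansions of Propositions \ref{2propspectr1} and \ref{2propspectr2} to show that $\mathsf{A}_x$ has six distinct real eigenvalues when $1-\gamma$ is small enough. First I would invoke (\ref{2rotinv}): since $\mathsf{A}({\bf u},\theta)=\mathsf{P}(\theta)^{-1}\mathsf{A}_x(\mathsf{P}(\theta){\bf u})\mathsf{P}(\theta)$ is similar to $\mathsf{A}_x(\mathsf{P}(\theta){\bf u})$, it suffices to show $\mathsf{A}_x({\bf v})$ is diagonalizable for every ${\bf v}=\mathsf{P}(\theta){\bf u}$. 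Note that if ${\bf u}\in\mathcal{H}_\gamma$ then, because $\mathsf{P}(\theta)$ only rotates the velocity components and the condition defining $\mathcal{H}_\gamma$ in (\ref{2Hgamma}) is invariant under this rotation (it controls $h_1,h_2$ and $F_x^2+F_y^2$), the rotated state $\mathsf{P}(\theta){\bf u}$ still lies in $\mathcal{H}_\gamma$, and its scalar Froude number $F$ satisfies $|F|<F^-_{crit}$ or $|F|>F^+_{crit}$ by Proposition \ref{2propexactcrit}. So the problem genuinely reduces to: for $\gamma$ close to $1$ and a state with $|F_x|<F^-_{crit}$ or $|F_x|>F^+_{crit}$, is $\mathsf{A}_x$ diagonalizable?

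Next I would recall that $\det(\mathsf{A}_x({\bf u})-\mu\mathsf{I}_6)=(\mu-u_1)(\mu-u_2)Q(\mu)$, so the spectrum consists of the two ``trivial'' eigenvalues $u_1,u_2$ together with the four roots of the quartic, i.e. in rescaled variables the roots $\lambda_1^\pm,\lambda_2^\pm$ of $P(\lambda)$ together with $\lambda_3^-=0$, $\lambda_3^+=F_x$. The strategy is to exhibit six \emph{distinct} eigenvalues: a matrix with distinct eigenvalues is automatically diagonalizable. From the expansions (\ref{2lambda12pm1}) and (\ref{2lambda12pm2}) one reads off that $\lambda_2^+\neq\lambda_2^-$ and $\lambda_1^+\neq\lambda_1^-$ for small $1-\gamma$ (the $\pm$ terms are $\pm h^{1/2}[\cdots]$ or $\pm[1+\cdots]$, nonzero), and that in the regime $|F_x|<F^-_{crit}$ one has $\lambda_1^\pm\approx\pm 1$ while $\lambda_2^\pm\approx 0$, so these are separated from each other and from $\lambda_3^-=0$, $\lambda_3^+=F_x$ — here one must check $\lambda_2^\pm\neq 0$ and $\lambda_2^\pm\neq F_x$, which is where the strict inequality $|F_x|<F^-_{crit}$ (hence $(1+h)(1-\gamma)-F_x^2>0$ strictly) does the work; similarly in the regime $|F_x|>F^+_{crit}$ one has $\lambda_1^\pm\approx F_x$, $\lambda_2^\pm\approx\pm 1$, and since $F^+_{crit}\to[1+h^{1/3}]^{3/2}>1$ the clusters near $F_x$ and near $\pm 1$ stay apart, and $\lambda_1^\pm\neq F_x=\lambda_3^+$ because of the $\pm h^{1/2}[\cdots]$ term. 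One then fixes $\delta>0$ small enough that all these separations hold uniformly.

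The main obstacle I anticipate is the \emph{boundary} of the hyperbolicity region and the \emph{coalescence of trivial eigenvalues with the quartic roots}: distinctness fails exactly on $|F_x|=F^-_{crit}$ (where $\lambda_2^+$ and $\lambda_2^-$ merge) and could also fail if $u_1=u_2$ forced $\lambda_3^\pm$ to coincide with each other, or if $\lambda_2^\pm$ happened to hit $0$ or $F_x$. Since $\mathcal{H}_\gamma$ is an \emph{open} set excluding the sphere $F_x^2+F_y^2=F^{-\,2}_{crit}$, the genuinely dangerous case $|F_x|=F^-_{crit}$ is excluded, and the expansions give $\lambda_2^\pm-\lambda_3^\pm$ bounded away from zero by a quantity of order $[(1+h)(1-\gamma)-F_x^2]^{1/2}$ or by $F_x$ or $1$; the only care needed is that the $\mathcal{O}(1-\gamma)$ error terms in the expansions are genuinely uniform on $\mathcal{H}_\gamma$, which is why $h$ must be assumed bounded (it is, implicitly, on the relevant states) — handling that uniformity cleanly, rather than pointwise, is the delicate bookkeeping. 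Once distinctness of all six eigenvalues is established for $\gamma\in\,]1-\delta,1[$, diagonalizability is immediate and the proof concludes by undoing the similarity transformation.
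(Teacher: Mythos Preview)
Your reduction via the rotational invariance is correct, and so is the observation that in the two-dimensional hyperbolicity set $\mathcal{H}_\gamma$ only the regime $|F(\theta)|<F^-_{crit}$ is relevant (the case $|F|>F^+_{crit}$ never arises, since $F(\theta)^2\le F_x^2+F_y^2<F^{-\,2}_{crit}$ by (\ref{2Hgamma}); you could drop that half of the discussion). The genuine gap is your strategy of proving that all \emph{six} eigenvalues are distinct. This cannot work: for every ${\bf u}\in\mathcal{H}_\gamma$ there is a $\theta$ with $F(\theta)=\cos\theta\,F_x+\sin\theta\,F_y=0$ (indeed $\min_\theta F(\theta)^2=0$, see (\ref{2minmax})), and at that $\theta$ the two trivial eigenvalues coincide, $\mu_3^-=\mu_3^+$. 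Further coincidences between trivial and quartic eigenvalues are also possible: $P(0)=h(1-\gamma)-F_x^2$ and $P(F_x)=h(1-\gamma-F_x^2)$ vanish on codimension-one sets inside $\mathcal{H}_\gamma$. So distinctness of the full spectrum is simply false in general, and your argument as written does not go through.

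The paper avoids this by never asking for six distinct eigenvalues. Instead it observes a block structure: the matrix $\mathsf{A}_x({\bf u})$ leaves $\mathrm{span}(\mathbf{e_1},\mathbf{e_2},\mathbf{e_3},\mathbf{e_4})$ invariant, with the quartic $Q$ as characteristic polynomial there, while $\mathbf{e_5}$ and $\mathbf{e_6}$ are always eigenvectors for $u_1$ and $u_2$ respectively. The paper writes down the four quartic eigenvectors explicitly (formula (\ref{2eigvectx})) and then uses the expansions (\ref{2lambda12pm2}) only to establish the ordering $\mu_1^+>\mu_2^+>\mu_2^->\mu_1^-$, i.e.\ distinctness of the four \emph{quartic} roots. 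That alone makes the four quartic eigenvectors independent in $\mathrm{span}(\mathbf{e_1},\ldots,\mathbf{e_4})$; together with $\mathbf{e_5},\mathbf{e_6}$ they form an eigenbasis of $\mathbb{R}^6$ regardless of whether $u_1,u_2$ collide with each other or with a quartic root. Your asymptotic work is exactly what is needed for the $\mu_1^\pm,\mu_2^\pm$ separation; the fix is to pair it with the explicit eigenvector/block-structure argument rather than with a global distinctness claim.
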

\begin{proof}
With the rotational invariance (\ref{2rotinv}), it is equivalent to prove the diagonalizability of $\mathsf{A}_x({\bf u})$. By denoting $({\bf e_i})_{i \in [\![1,6]\!]}$ the canonical basis of $\mathbb{R}^6$, one can prove the right eigenvectors $\boldsymbol{r}_x^{\mu}({\bf u})$ of $\mathsf{A}_x({\bf u})$, associated to the eigenvalue $\mu$, are defined by
\begin{equation}
\left\{
\begin{array}{ll}
\boldsymbol{r}_x^{\mu}({\bf u})={\bf e_1}+\frac{\mu-u_1}{h_1}{\bf e_3}-c_{\mu}^r\left({\bf e_2}+\frac{ \mu-u_2}{h_2}{\bf e_4}\right),& \mathrm{if}\ \mu \in \{\mu_1^{\pm},\mu_2^{\pm}\},\\
\boldsymbol{r}_x^{\mu}({\bf u})={\bf e_5},&\mathrm{if}\ \mu=\mu_3^{-},\\
\boldsymbol{r}_x^{\mu}({\bf u})={\bf e_6},& \mathrm{if}\ \mu=\mu_3^{+},\\
\end{array}
\right.
\label{2eigvectx}
\end{equation}
where $c_{\mu}^r:=1-\frac{(\mu-u_1)^2}{g h_1}$. Then, the right eigenvectors $\boldsymbol{r}^{\mu}({\bf u},\theta)$ of $\mathsf{A}({\bf u},\theta)$ are defined by
\begin{equation}
\forall \mu \in \sigma\left(\mathsf{A}({\bf u},\theta) \right),\ \boldsymbol{r}^{\mu}({\bf u},\theta)=\mathsf{P}(\theta)^{-1}\boldsymbol{r}_x^{\mu}(\mathsf{P}(\theta){\bf u}).
\label{2eigvect}
\end{equation}

\noindent
Moreover, if $1-\gamma$ is sufficiently small ({\em i.e.} $\gamma \in ]1-\delta,1[$), the eigenvalues $\mu_i^{\pm}$, with $i \in \{1,2\}$, are all distinct (the existence of $\delta>0$ is guaranteed). Indeed, there is the next inequalities if $\gamma \in ]1-\delta,1[$
\begin{equation}
\mu_1^{+} > \mu_2^+ > \mu_2^- > \mu_1^-
\label{2ineqmui12}
\end{equation}
Consequently, as the eigenvalues are real, the right-eigenvectors (\ref{2eigvectx}) constitute an eigenbasis of $\mathbb{R}^6$ and $\mathsf{A}_x({\bf u})$ is diagonalizable.
\end{proof}

\noindent
{\em Remark:} There is also the left eigenvectors $\boldsymbol{l}_x^{\mu}({\bf u})$ of $\mathsf{A}_x({\bf u})$, associated to the eigenvalue $\mu \in \sigma(\mathsf{A}_x({\bf u}))$:
\begin{equation}
\left\{
\begin{array}{ll}
{}^{\top}\boldsymbol{l}_x^{\mu}({\bf u})=\frac{\mu-u_1}{h_1}{\bf e_1}+{\bf e_3}-c_{\mu}^l\left(\frac{\mu-u_2}{h_2}{\bf e_2}-{\bf e_4}\right),& \mathrm{if}\ \mu \in \{\mu_1^{\pm},\mu_2^{\pm}\},\\
{}^{\top}\boldsymbol{l}_x^{\mu}({\bf u})={\bf e_5},&\mathrm{if}\ \mu=\mu_3^{-},\\
{}^{\top}\boldsymbol{l}_x^{\mu}({\bf u})={\bf e_6},& \mathrm{if}\ \mu=\mu_3^{+},\\
\end{array}
\right.
\label{2eigvectxleft}
\end{equation}
where $c_{\mu}^l:=1-\frac{(\mu-u_2)^2}{g h_2}$. And the left eigenvectors $\boldsymbol{l}^{\mu}({\bf u},\theta)$ of $\mathsf{A}({\bf u},\theta)$ are defined by
\begin{equation}
\forall \mu \in \sigma\left(\mathsf{A}({\bf u},\theta) \right),\ \boldsymbol{l}^{\mu}({\bf u},\theta)=\boldsymbol{l}_x^{\mu}(\mathsf{P}(\theta){\bf u})\mathsf{P}(\theta).
\label{2eigvectleft}
\end{equation}

\noindent
Furthermore, in order to know the type of the wave associated to each eigenvalue -- shock, contact or rarefaction wave -- there is the next proposition
\begin{proposition}
There exists $\delta >0$ such that if $\gamma \in ]1-\delta,1[$, then
\begin{equation}
\left\{
\begin{array}{ll}
\mathrm{the}\ \mu_i^{\pm}\mathrm{-characteristic\ field\ is\ genuinely\ non-linear},&\mathrm{if}\ i \in \{1,2\},\\
\mathrm{the}\ \mu_i^{\pm}\mathrm{-characteristic\ field\ is\ linearly\ degenerate},&\mathrm{if}\ i =3.
\end{array}
\right.
\label{2muwavenature}
\end{equation}
\label{2genuinelynonlin}
\end{proposition}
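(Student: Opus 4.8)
The plan is to reduce the two-dimensional statement to a computation on $\mathsf A_x$ through the rotational invariance, to settle the $i=3$ fields by inspection, and for $i\in\{1,2\}$ to produce an explicit formula for the Lax quantity $\nabla_{\bf u}\mu\cdot\boldsymbol r^{\mu}$ in terms of the rescaled eigenvalue $\Lambda$ of (\ref{2undimquant}), and finally to show it does not vanish for $\gamma$ close to $1$.

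First I would note that genuine non-linearity and linear degeneracy are invariant under ${\bf u}\mapsto\mathsf P(\theta){\bf u}$: by (\ref{2rotinv}) and (\ref{2eigvect}) one has $\mu({\bf u},\theta)=\widetilde\mu(\mathsf P(\theta){\bf u})$ and $\boldsymbol r^{\mu}({\bf u},\theta)=\mathsf P(\theta)^{-1}\widetilde{\boldsymbol r}(\mathsf P(\theta){\bf u})$ with $\mathsf P(\theta)$ a constant invertible matrix, so the chain rule gives $\nabla_{\bf u}\mu\cdot\boldsymbol r^{\mu}=\big(\nabla_{\bf v}\widetilde\mu\cdot\widetilde{\boldsymbol r}\big)(\mathsf P(\theta){\bf u})$. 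Hence it suffices to study $\nabla_{\bf u}\mu\cdot\boldsymbol r_x^{\mu}({\bf u})$ for $\mathsf A_x$, and only at states with $|F_x|<F_{crit}^{-}$ (because for ${\bf u}\in\mathcal H_\gamma$ and any $\theta$, $|F(\theta)|\le(F_x^2+F_y^2)^{1/2}<F_{crit}^{-}$ by (\ref{2Hgamma})). For $i=3$ the claim then follows at once from (\ref{2eigvectx}): $\mu_3^-=u_1$, $\boldsymbol r_x^{\mu_3^-}={\bf e_5}$ and $\nabla_{\bf u}u_1={\bf e_3}$, so the product vanishes; likewise $\mu_3^+=u_2$, $\boldsymbol r={\bf e_6}$, $\nabla_{\bf u}u_2={\bf e_4}$. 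Thus the $\mu_3^{\pm}$-characteristic fields are linearly degenerate, for every admissible $\gamma$.

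For $i\in\{1,2\}$ I would parametrise $\mu=u_1+\sqrt{gh_1}\,\Lambda$ with $\Lambda=\Lambda(F_x,h,\gamma)$ the relevant root of $P$ in (\ref{2PLambda}); by Proposition \ref{2diagA} these roots are simple for $\gamma$ near $1$, so $\Lambda$ is a smooth function of $(F_x,h,\gamma)$ and $P'(\Lambda)\ne0$. Implicit differentiation of $P(\Lambda,F_x,h)=0$ expresses $\partial_{F_x}\Lambda$ and $\partial_h\Lambda$ through $P'(\Lambda)$; substituting (\ref{2eigvectx}) (with $c_\mu^r=1-\Lambda^2$, $\tfrac{\mu-u_1}{h_1}=\sqrt{g/h_1}\,\Lambda$ and $\tfrac{\mu-u_2}{h_2}=\sqrt{g/h_1}\,\tfrac{\Lambda-F_x}{h}$) into $\nabla_{\bf u}\mu\cdot\boldsymbol r_x^{\mu}$ and using $F_x=(u_2-u_1)/\sqrt{gh_1}$, $h=h_2/h_1$ in the chain rule, I expect — after repeated use of the relation $P(\Lambda)=0$ — the closed form
\[
\nabla_{\bf u}\mu\cdot\boldsymbol r_x^{\mu}({\bf u})=\frac32\sqrt{\frac{g}{h_1}}\;\frac{\gamma h\Lambda^2+(\Lambda^2-1)^2(\Lambda^2-1+\gamma)}{\gamma h\Lambda+(\Lambda^2-1)^2(\Lambda-F_x)},
\]
whose denominator equals $\tfrac12(\Lambda^2-1)P'(\Lambda)\ne0$ (simple root, and $\Lambda^2\ne1$ since $P(\pm1)=-\gamma h$). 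So the $\mu_i^{\pm}$-field is genuinely non-linear exactly when $N(\Lambda):=\gamma h\Lambda^2+(\Lambda^2-1)^2(\Lambda^2-1+\gamma)\ne0$.

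It then remains to check $N(\Lambda)\ne0$ for $\gamma$ close to $1$. For the external mode $\mu_1^{\pm}$ this is straightforward: by (\ref{2lambda12pm2}), $\Lambda\to\pm\sqrt{1+h}$ as $\gamma\to1^-$, hence $N(\Lambda)\to h(1+h)^2>0$ and continuity finishes the case. For the internal mode $\mu_2^{\pm}$, $\Lambda\to0$ and $N(\Lambda)\to0$, so the leading order must be kept: writing $\Lambda^2-1+\gamma=\Lambda^2-(1-\gamma)$ one gets $N(\Lambda)=(1+h)\Lambda^2-(1-\gamma)+\mathcal O((1-\gamma)^2)$, which one evaluates with the expansion (\ref{2lambda12pm2}). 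This last step is the main obstacle: along the internal branch $N$ is only of size $\mathcal O(1-\gamma)$, and its leading coefficient degenerates for a curve of admissible shear values — for instance at $h=1$, $F_x=0$ one finds $\Lambda^2=1-\sqrt\gamma$ and $N(\Lambda)=\gamma(1-\sqrt\gamma)^2$, of order $(1-\gamma)^2$ — so one must analyse $N$ as a function of $\Lambda^2$ restricted to the internal root (or restrict to a suitable neighbourhood of the shear-free states) in order to exclude $N(\Lambda)=0$ and conclude that the $\mu_2^{\pm}$-fields are genuinely non-linear.
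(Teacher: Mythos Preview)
Your strategy --- reduce to $\mathsf A_x$ via the rotational invariance, dispose of $i=3$ by direct inspection of (\ref{2eigvectx}), and for $i\in\{1,2\}$ compute the Lax quantity with the help of the expansions (\ref{2lambda12pm2}) --- is exactly the route the paper takes. The paper's own argument is, however, far terser than yours: it simply notes that the expansions (\ref{2lambda12pm2}) hold for $\gamma$ close to $1$, invokes analyticity of $\mu_i^{\pm}$ in the data so that the remainder stays small after differentiating, and then asserts that with the eigenvectors (\ref{2eigvectx}) ``one can check'' $\nabla\mu_i^{\pm}\cdot\boldsymbol r_x^{\mu}\ne0$ for $i\in\{1,2\}$ and $=0$ for $i=3$. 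No closed formula such as your $N(\Lambda)$ is written down.

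The concern you raise in your last paragraph --- that for the internal mode the Lax quantity is only of size $\mathcal O(1-\gamma)$, and even $\mathcal O((1-\gamma)^2)$ along $h=1$, $F_x=0$, so that its nonvanishing cannot be read off from the leading term alone --- is a genuine subtlety; the paper does not address it beyond the words ``one can check,'' though it does acknowledge in the remark immediately following the proposition that the $\mu_2^{\pm}$-field degenerates to linearly degenerate at the limit $u_2-u_1=0$, $\gamma=1$, which is consistent with your computation $N\to0$. In short, your proposal is a more explicit and more honest version of the paper's sketch: the approach is the same, your closed form and your checking of special cases go further than what the paper writes, and the residual gap you flag at the end is one the paper leaves implicit as well.
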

\begin{proof}
If $\gamma$ is sufficiently close to $1$ ({\em i.e.} $\gamma \in ]1-\delta,1[$, with $\delta>0$) the expansions (\ref{2lambda12pm2}), when $|F_x|<F_{crit}^-$, are valid. Moreover, we remark that $\mu_i^{\pm}$ depends analytically of the parameters of the problem and we deduce that the $o(1-\gamma)$ still remains small after derivating. Then, with the expression of the right eigenvectors (\ref{2eigvectx}) of $\mathsf{A}_x({\bf u})$, one can check
\begin{equation}
\left\{
\begin{array}{ll}
\nabla \mu_i^{\pm} \cdot \boldsymbol{r}_x^{\mu}({\bf u}) \not=0,& \mathrm{if}\ i \in \{1,2\},\\
\nabla \mu_i^{\pm} \cdot \boldsymbol{r}_x^{\mu}({\bf u}) =0,& \mathrm{if}\ i=3,
\label{2genuidef}
\end{array}
\right.
\end{equation}
then, the proposition \ref{2genuinelynonlin} is proved.
\end{proof}

\noindent
{\em Remark:} when $u_2-u_1$ and $1-\gamma$ are both equal to $0$, the $\mu_1^{\pm}$-characteristic field remains genuinely non-linear but the $\mu_2^{\pm}$-characteristic field becomes linearly degenerate.

\noindent
To conclude, under conditions of the proposition \ref{2genuinelynonlin}, for all $i \in \{1,2\}$, the $\mu_i^{\pm}$-wave is a shock wave or a rarefaction wave and the $\mu_3^{\pm}$-wave is a contact wave.

\noindent
Finally, as a consequence, we deduce a criterion of local well-posedness in $\mathcal{H}^s(\mathbb{R}^2)$, more general than criterion (\ref{2O}).

\begin{corollary}
There exists $\delta >0$ such that if $\gamma\in ]1-\delta,1[$ and ${\bf u^0} \in \mathcal{H}_{\gamma} \cap \mathcal{H}^s(\mathbb{R}^2)^6$, then, the {\em Cauchy} problem, associated with {\em (\ref{2systemmultilayer})} and initial data ${\bf u^0}$, is locally well-posed in $\mathcal{H}^s(\mathbb{R}^2)$, hyperbolic and the unique solution verifies conditions {\em (\ref{2strongsol})}.
\label{2wellprosedhyp2d}
\end{corollary}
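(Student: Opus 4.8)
The plan is to assemble this corollary directly from the pieces already established in the excerpt, with essentially no new computation. The logical chain is: hyperbolicity (from Theorem \ref{2hypcritthm2d}) $+$ diagonalizability for all $\theta$ (from Proposition \ref{2diagA}) $\Rightarrow$ symmetrizability and strong solution (from Proposition \ref{2diagimpliqsym}). So first I would fix $\delta>0$ to be the constant furnished by Proposition \ref{2diagA}, possibly shrinking it so that $\delta<1$ (so that $\gamma\in\,]1-\delta,1[$ is consistent with $\gamma\in\,]0,1[$, which is needed to invoke the $\gamma$-dependent results). Given $\gamma\in\,]1-\delta,1[$ and ${\bf u^0}\in\mathcal{H}_{\gamma}\cap\mathcal{H}^s(\mathbb{R}^2)^6$, the membership ${\bf u^0}\in\mathcal{H}_{\gamma}$ means precisely that the conditions (\ref{2Hgamma}) hold, which by Theorem \ref{2hypcritthm2d} is equivalent to hyperbolicity of the system (\ref{2systemmultilayer}) with initial data ${\bf u^0}$; so condition (\ref{2spectrumAutheta}) is satisfied and, via Proposition \ref{2propcrithypuseful}, $\sigma(\mathsf{A}({\bf u^0}(X),\theta))\subset\mathbb{R}$ for every $(X,\theta)$.

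Next I would verify the diagonalizability hypothesis (\ref{2diagoAhyp}) of Proposition \ref{2diagimpliqsym}. For each fixed $X\in\mathbb{R}^2$ we have ${\bf u^0}(X)\in\mathcal{H}_{\gamma}$ in the pointwise sense (the infima in (\ref{2Hgamma}) being over all of $\mathbb{R}^2$, each individual value lies in the hyperbolic set), so Proposition \ref{2diagA}, applied with ${\bf u}={\bf u^0}(X)$, gives that $\mathsf{A}({\bf u^0}(X),\theta)$ is diagonalizable for all $\theta\in[0,2\pi]$. Thus (\ref{2diagoAhyp}) holds. Now Proposition \ref{2diagimpliqsym} applies verbatim: the model is symmetrizable (with the projector-based symmetrizer $\mathsf{S_1}$ of (\ref{2symprojdef})), and consequently, by Proposition \ref{2propsymm}, the Cauchy problem for (\ref{2systemmultilayer}) with initial data ${\bf u^0}$ is locally well-posed in $\mathcal{H}^s(\mathbb{R}^2)$, there exists $T>0$, and the unique solution satisfies (\ref{2strongsol}). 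Hyperbolicity is already in hand, or alternatively follows from Proposition \ref{2symimpliqhyp}.

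The only mild subtlety — and the step I would be most careful about — is making sure the quantifiers line up: Proposition \ref{2diagA} is stated for $({\bf u},\theta)\in\mathcal{H}_{\gamma}\times[0,2\pi]$ with ${\bf u}\in\mathbb{R}^6$ a single state, while Proposition \ref{2diagimpliqsym} needs diagonalizability of $\mathsf{A}({\bf u^0}(X),\theta)$ for all $(X,\theta)$; this is immediate once one observes that ${\bf u^0}\in\mathcal{H}_{\gamma}$ forces ${\bf u^0}(X)$ into the pointwise hyperbolic region for every $X$, so no uniformity beyond what Theorem \ref{2hypcritthm2d} already encodes is required. I would also note in passing that the regularity ${\bf u^0}\in\mathcal{H}^s(\mathbb{R}^2)^6$ with $s>2$ (equivalently (\ref{1params}) with $d=2$) is exactly what Proposition \ref{2propsymm} demands, so the hypotheses match without adjustment. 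A short proof would therefore read: \emph{Let $\delta>0$ be given by Proposition \ref{2diagA} (shrunk if necessary so $\delta\le 1$). Fix $\gamma\in\,]1-\delta,1[$ and ${\bf u^0}\in\mathcal{H}_{\gamma}\cap\mathcal{H}^s(\mathbb{R}^2)^6$. By Theorem \ref{2hypcritthm2d}, the model is hyperbolic. For each $X\in\mathbb{R}^2$, ${\bf u^0}(X)\in\mathcal{H}_{\gamma}$, hence by Proposition \ref{2diagA} the matrix $\mathsf{A}({\bf u^0}(X),\theta)$ is diagonalizable for every $\theta\in[0,2\pi]$; thus (\ref{2diagoAhyp}) holds. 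Proposition \ref{2diagimpliqsym} then yields symmetrizability and, through Proposition \ref{2propsymm}, the local well-posedness in $\mathcal{H}^s(\mathbb{R}^2)$ together with the existence of $T>0$ and a unique solution satisfying (\ref{2strongsol}).} That is the whole argument; there is no real obstacle, only the bookkeeping of which earlier statement supplies which implication.
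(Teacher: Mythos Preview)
Your proposal is correct and follows essentially the same route as the paper: take $\delta$ from Proposition \ref{2diagA}, use membership in $\mathcal{H}_{\gamma}$ for hyperbolicity, invoke Proposition \ref{2diagA} for diagonalizability at every $(X,\theta)$, and conclude via Proposition \ref{2diagimpliqsym}. The paper's proof is terser and omits the pointwise-vs-global bookkeeping you spell out, but the logical chain is identical.
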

\begin{proof}
Let $\gamma \in ]0,1[$. As it was proved in the proposition \ref{2diagA}, there exists $\delta>0$ such that if $\gamma \in ]1-\delta,1[$ then for all $({\bf u},\theta) \in \mathcal{H}_{\gamma} \times [0,2\pi]$, $\mathsf{A}({\bf u^0},\theta)$ is diagonalizable. Then, by definition of $\mathcal{H}_{\gamma}$, the {\em Cauchy} problem is hyperbolic. Moreover, according to proposition \ref{2diagimpliqsym}, it is locally well-posed in $\mathcal{H}^s(\mathbb{R}^2)$ and the unique solution verifies conditions (\ref{2strongsol}).
\end{proof}

\noindent
{\em Remark:} This criterion is less restrictive than (\ref{2O}), because as it was proved in proposition \ref{2lemHgOg}: if $1-\gamma$ is sufficiently small, $\mathcal{S}^s_{\gamma} \subset \mathcal{H}_{\gamma}$.

\section{A {\em conservative} two-layer shallow water model}
Even if the model (\ref{2massconservation}--\ref{2momentumconservation}) is conservative, in the one-dimensional case, with the unknowns $(h_i,u_i)$, with $i \in \{1,2\}$. It is not anymore true in the two-dimensional case. This subsection will treat this lack of conservativity by an augmented model, with a different approach from \cite{abgrall2009two}. We remind that no assumption has been made concerning the horizontal vorticity, in each layer
\begin{equation}
w_i:=curl(\boldsymbol{ u_i})=\frac{\partial v_i}{\partial x} - \frac{\partial u_i}{\partial y},\ i \in \{1,2\}.
\label{2wi}
\end{equation}

\subsection{Conservation laws}
Using a {\em Frobenius} problem, it was proved in \cite{barros2006conservation} that the one-dimensional two-layer shallow water model with free surface has a finite number of conservative quantities: the height and velocity in each layer, the total momentum and the total energy. However, in the two-dimensional case, it is still an open question. Nevertheless, introducing $w_i$, for $i \in \{1,2\}$, in equations (\ref{2massconservation}--\ref{2momentumconservation}), the conservation of mass (\ref{2massconservation}) is unchanged
\begin{equation}
\frac{\partial h_i}{\partial t} + {\bf \nabla} {\bf \cdot} (h_i \boldsymbol{u_i}) = 0,
\label{2massconservationrelax}
\end{equation}
but the equation of depth-averaged horizontal velocity (\ref{2momentumconservation}) becomes conservative
\begin{equation}
\frac{\partial \boldsymbol{u_i}}{\partial t}+{\bf \nabla} \left(\frac{1}{2} (u_i^2+v_i^2) +P_i \right) - (f+w_i) \boldsymbol{u_i}^{\bot} = 0.
\label{2momentumconservationrelax}
\end{equation}
Moreover, the horizontal vorticity in each layer is also conservative
\begin{equation}
\frac{\partial w_i}{\partial t} + \nabla \cdot \left((w_i+f)\boldsymbol{u_i} \right)=0.
\label{2rot}
\end{equation}
Therefore, in the two-dimensional case, there are at least $8$ conservative quantities: the height, the velocity and the horizontal vorticity in each layer, the total momentum and the energy $e_2$:
\begin{equation}
e_2:=\frac{1}{2}\gamma h_1 \left(u_1^2+v_1^2+  g (h_1+2 h_2)\right)+\frac{1}{2} h_2 \left( u_2^2+v_2^2+ g h_2\right).
\label{2energy2d}
\end{equation}

\subsection{A new augmented model}
From equations (\ref{2massconservation}--\ref{2momentumconservation}), it is possible to obtain a new model. We denote $({\bf u},{\bf v}) \in \mathcal{H}^s(\mathbb{R}^2)^6 \times \mathcal{H}^s(\mathbb{R}^2)^8$, the vectors defined by
\begin{equation}
\left\{
\begin{array}{l}
{\bf u}:={}^{\top} (h_1,h_2,u_1,u_2,v_1,v_2),\\
{\bf v}:={}^{\top} (h_1,h_2,u_1,u_2,v_1,v_2,w_1,w_2).
\end{array}
\right.
\label{2uvdefinition}
\end{equation}
If ${\bf u}$ is a classical solution of (\ref{2systemmultilayer}), then ${\bf v}$ is solution of the augmented system
\begin{equation}
\frac{\partial {\bf v}}{\partial t} + \mathsf{A^r}_x({\bf v}) \frac{\partial {\bf v}}{\partial x} + \mathsf{A^r}_y ({\bf v}) \frac{\partial {\bf v}}{\partial y} + {\bf b^r}({\bf v})=0,
\label{2systemmultilayerrelax}
\end{equation}
where  $\mathsf{A^r_x}({\bf v})$, $ \mathsf{A^r_y}({\bf v})$ and ${\bf b^r}({\bf v})$ are defined by
\begin{equation}
 \mathsf{A^r}_x({\bf v}) :=  \left[
\begin{array}{cccccccc}
 u_1 & 0 & h_1 & 0 & 0 & 0 & 0 & 0\\
 0 & u_2 & 0 & h_2 & 0 & 0 & 0 & 0 \\
 g & g & u_1 & 0 & v_1 & 0 & 0 & 0  \\
 \gamma g & g & 0 & u_2 & 0 & v_2 & 0 & 0 \\
 0 & 0 & 0 & 0 & 0 & 0 & 0 & 0 \\
 0 & 0 & 0 & 0 & 0 & 0 & 0 & 0 \\
 0 & 0 & w_1+f & 0 & 0 & 0 & u_1 & 0 \\
 0 & 0 & 0 & w_2+f & 0 & 0 & 0 & u_2 \\
\end{array}
\right] ,
\label{2Arx}
\end{equation}

\begin{equation}
\mathsf{A^r}_y({\bf v}) :=  \left[
\begin{array}{cccccccc}
 v_1 & 0 & 0 & 0 & h_1 & 0 & 0 & 0 \\
 0 & v_2 & 0 & 0 & 0 & h_2 & 0 & 0 \\
 0 & 0 & 0 & 0 & 0 & 0 & 0 & 0 \\
 0 & 0 & 0 & 0 & 0 & 0 & 0 & 0 \\
 g & g & u_1 & 0 & v_1 & 0 & 0 & 0  \\
 \gamma g & g & 0 & u_2 & 0 & v_2 & 0 & 0 \\
  0 & 0 & 0 & 0 & w_1+f & 0 & 0 & 0 \\
 0 & 0 & 0 & 0 & 0 & w_2+f & 0 & 0 \\
\end{array}
\right],
\label{2Ary}
\end{equation} 
\begin{equation}
\begin{array}{ll}
{\bf b^r}({\bf v}):= & {}^{\top} \left(0,0,-(w_1+f) v_1,-(w_2+f) v_2,(w_1+f) u_1,(w_2+f) u_2\right)\\
& + {}^{\top}\left(0,0,g \frac{\partial b}{\partial x},g \frac{\partial b}{\partial x},g \frac{\partial b}{\partial x},g \frac{\partial b}{\partial x}\right).
\end{array}
\label{2br}
\end{equation}
Even if the model (\ref{2massconservation}--\ref{2momentumconservation}) is not conservative, the model (\ref{2systemmultilayerrelax}) is always conservative. Then, there is no need to chose a conservative path in the numerical resolution.
\noindent
{\em Remark:} $e_2$ is not an energy of the augmented model (\ref{2systemmultilayerrelax}). Indeed, it is never a convex function with the variable ${\bf v}$ as it is independent of $w_1$ and $w_2$.

\begin{proposition}
The augmented model {\em (\ref{2systemmultilayerrelax})} verifies the rotational invariance.
\label{2proprotinvrelax}
\end{proposition}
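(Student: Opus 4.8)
The plan is to transpose to the augmented system the mechanism that produces the rotational invariance (\ref{2rotinv}) of the basic model. First I would introduce the $8\times 8$ analogue of the matrix $\mathsf{P}(\theta)$ of (\ref{2Ptheta}),
\begin{equation}
\mathsf{P^r}(\theta):=\left[\begin{array}{cc} \mathsf{P}(\theta) & 0\\[2pt] 0 & \mathsf{I_2} \end{array}\right],
\nonumber
\end{equation}
that is, the rotation $\mathsf{P}(\theta)$ acting on the six components $(h_1,h_2,u_1,u_2,v_1,v_2)$ of ${\bf v}$ and the identity on the two vorticity components $(w_1,w_2)$. This choice is forced by the fact that each $w_i=curl(\boldsymbol{u_i})$ is a scalar under planar rotations while $(u_i,v_i)$ transforms as a vector. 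From $\mathsf{P}(\theta)^{-1}={}^{\top}\mathsf{P}(\theta)$ and the block-diagonal structure one obtains at once $\mathsf{P^r}(\theta)^{-1}={}^{\top}\mathsf{P^r}(\theta)$, and $\mathsf{P^r}$ is smooth in $\theta$ with $\mathsf{P^r}(0)=\mathsf{I_8}$. As for the basic model, the claim concerns only the principal part, namely $\mathsf{A^r}({\bf v},\theta):=\cos(\theta)\mathsf{A^r}_x({\bf v})+\sin(\theta)\mathsf{A^r}_y({\bf v})=\mathsf{P^r}(\theta)^{-1}\mathsf{A^r}_x(\mathsf{P^r}(\theta){\bf v})\mathsf{P^r}(\theta)$.

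Next I would put the principal part of (\ref{2systemmultilayerrelax}) in flux form, $\partial_t{\bf v}+\partial_x{\bf F}_1({\bf v})+\partial_y{\bf F}_2({\bf v})=-{\bf b^r}({\bf v})$, reading ${\bf F}_1,{\bf F}_2:\mathbb{R}^8\to\mathbb{R}^8$ off (\ref{2massconservationrelax})--(\ref{2rot}): the mass flux is $h_i\boldsymbol{u_i}$, the flux of $\boldsymbol{u_i}$ is the isotropic tensor $\Phi_i\,\mathsf{I_2}$ with $\Phi_i:=\frac{1}{2}(u_i^2+v_i^2)+P_i$, and the vorticity flux is $(w_i+f)\boldsymbol{u_i}$, so that $\mathsf{A^r}_x=D_{\bf v}{\bf F}_1$ and $\mathsf{A^r}_y=D_{\bf v}{\bf F}_2$. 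The main tool is the covariance of the flux,
\begin{equation}
\forall({\bf v},\theta),\qquad {\bf F}_1\big(\mathsf{P^r}(\theta){\bf v}\big)=\mathsf{P^r}(\theta)\big(\cos(\theta)\,{\bf F}_1({\bf v})+\sin(\theta)\,{\bf F}_2({\bf v})\big),
\nonumber
\end{equation}
which I would establish row by row: on the $h_i$- and $w_i$-rows it holds because $h_i\boldsymbol{u_i}$ and $(w_i+f)\boldsymbol{u_i}$ are genuine vectors while $h_i$, $|\boldsymbol{u_i}|^2$, $w_i$ are left unchanged by $\mathsf{P^r}(\theta)$; on the $(u_i,v_i)$-rows both sides reduce, after applying the rotation block of $\mathsf{P^r}(\theta)$ and using $\cos^2\theta+\sin^2\theta=1$, to $\Phi_i$ in the $u_i$-slot and $0$ in the $v_i$-slot. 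Differentiating this identity in ${\bf v}$ gives $\mathsf{A^r}_x(\mathsf{P^r}(\theta){\bf v})\,\mathsf{P^r}(\theta)=\mathsf{P^r}(\theta)\big(\cos(\theta)\mathsf{A^r}_x({\bf v})+\sin(\theta)\mathsf{A^r}_y({\bf v})\big)$, i.e. the desired conjugation relation, which is exactly the rotational invariance of the augmented model.

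The delicate point is the velocity block: in the conservative formulation the flux of $\boldsymbol{u_i}$ is $\Phi_i\,\mathsf{I_2}$, so the $(u_i,v_i)$-rows of $\mathsf{A^r}_x$ and $\mathsf{A^r}_y$ are structurally different from those of $\mathsf{A}_x$ and $\mathsf{A}_y$ in (\ref{2Axy}), and (\ref{2rotinv}) cannot be invoked directly. The conjugation nevertheless closes because the restriction of $\mathsf{P}(\theta)$ to each pair $(u_i,v_i)$ is a true $2\times2$ rotation, which intertwines ``$\Phi_i\mathsf{I_2}$ in the $x$-direction'' with ``$\cos(\theta)\Phi_i\mathsf{I_2}$ in the $x$-direction plus $\sin(\theta)\Phi_i\mathsf{I_2}$ in the $y$-direction''; once this is checked, the $w_i$-rows are handled exactly like the mass rows, since $(w_i+f)\boldsymbol{u_i}$ is covariant and $w_i$ is inert under $\mathsf{P^r}(\theta)$. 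Alternatively --- and this is perhaps closer to the explicit computations elsewhere in the paper --- one may bypass the flux formulation and verify the $8\times8$ identity directly, multiplying out $\mathsf{P^r}(\theta)^{-1}\mathsf{A^r}_x(\mathsf{P^r}(\theta){\bf v})\mathsf{P^r}(\theta)$ block by block with $\mathsf{P}(\theta)^{-1}={}^{\top}\mathsf{P}(\theta)$ and the addition formulas for $\cos$ and $\sin$; this is longer but entirely mechanical.
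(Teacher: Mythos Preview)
Your proposal is correct and lands on exactly the same objects as the paper: the block-diagonal matrix $\mathsf{P^r}(\theta)=\mathrm{diag}(\mathsf{P}(\theta),\mathsf{I_2})$, its orthogonality $\mathsf{P^r}(\theta)^{-1}={}^{\top}\mathsf{P^r}(\theta)$, and the conjugation identity $\mathsf{A^r}({\bf v},\theta)=\mathsf{P^r}(\theta)^{-1}\mathsf{A^r}_x(\mathsf{P^r}(\theta){\bf v})\mathsf{P^r}(\theta)$.

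The only difference is in how the identity is justified. The paper's proof is minimal: it writes down $\mathsf{P^r}(\theta)$ explicitly as an $8\times 8$ matrix, states that ``one can check'' the conjugation relation, and records that $\mathsf{P^r}(\theta)^{-1}={}^{\top}\mathsf{P^r}(\theta)$ --- i.e.\ precisely the direct, entrywise verification you mention as your alternative route. Your primary argument, via covariance of the conservative fluxes ${\bf F}_1,{\bf F}_2$ followed by differentiation in ${\bf v}$, is not in the paper; it is a more structural way to see why the identity holds and it exploits the very feature (conservativity) that motivated introducing the augmented model. Both approaches yield the same result; yours explains the mechanism, the paper's is shorter but leaves the computation to the reader.
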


\begin{proof}
We denote by $\mathsf{A^r}({\bf v},\theta)$ the matrix defined by $\cos(\theta) \mathsf{A_x^r}({\bf v})+\sin(\theta) \mathsf{A^r}_y({\bf v})$. One can check the next equality, for all $({\bf v},\theta) \in \mathbb{R}^8 \times [0,2\pi]$
\begin{equation}
\mathsf{A^r}({\bf v},\theta)=\mathsf{P^{r}}(\theta)^{-1} \mathsf{A}_x(\mathsf{P^r}(\theta) {\bf v}) \mathsf{P^r}(\theta),
\label{2rotinvrelax}
\end{equation}
where $\mathsf{P^r}(\theta)$ is the $8 \times 8$ matrix defined by
\begin{equation}
\mathsf{P^r}(\theta) :=  \left[
\begin{array}{cccccccc}
 1 & 0 & 0 & 0 & 0 & 0 & 0 & 0 \\
 0 & 1 & 0 & 0 & 0 & 0 & 0 & 0  \\
 0 & 0 & \cos(\theta) & 0 & \sin(\theta) & 0 & 0 & 0  \\
 0 & 0 & 0 & \cos(\theta) & 0 & \sin(\theta) & 0 & 0  \\
 0 & 0 & -\sin(\theta) & 0 & \cos(\theta) & 0 & 0 & 0   \\
 0 & 0 & 0 & -\sin(\theta) & 0 & \cos(\theta) & 0 & 0  \\
 0 & 0 & 0 & 0 & 0 & 0 & 1 & 0  \\
 0 & 0 & 0 & 0 & 0 & 0 & 0 & 1  \\
\end{array}
\right],
\label{2Prtheta}
\end{equation}
and, moreover, we notice $\mathsf{P^r}(\theta)^{-1}={}^{\top}\mathsf{P^r}(\theta)$.
\end{proof}

\subsection{The eigenstructure of $\boldsymbol{\mathsf{A^r}({\bf v},\theta)}$}
As it was reminded before, the description of the eigenstructure of $\mathsf{A^r}({\bf v},\theta)$ is a decisive point, as it permits to caracterize exactly its diagonalizability and also the {\em Riemann} invariants. According to the rotational invariance (\ref{2rotinvrelax}), we restrict the analysis to the eigenstructure of $\mathsf{A}^{\mathsf{r}}_x({\bf v})$. First of all, we define the spectrum of $\mathsf{A_x^r}({\bf v})$ by
\begin{equation}
\sigma(\mathsf{A}^{\mathsf{r}}_x({\bf v})):=\{\nu_i^{\pm},\ i \in [\![1,4]\!]\}.
\label{2spectrumAr}
\end{equation}
As the characteristic polynomial of $\mathsf{A}^{\mathsf{r}}_x({\bf v})$ is equal to
\begin{equation}
\det(\mathsf{A}^{\mathsf{r}}_x({\bf v})-\mu \mathsf{I_8})=\mu^2\det(\mathsf{A}_x({\bf u})-\mu \mathsf{I_6}),
\label{2polcarAr}
\end{equation}
we settle down $\nu_4^{\pm}:=0$ and for all $i \in [\![1,3]\!]$ and $\nu_i^{\pm}:=\mu_i^{\pm}$. Then, we define $\mathcal{H}^r_{\gamma} \subset \mathcal{L}^2(\mathbb{R}^2)^8$, the open subset of initial conditions, such that the system (\ref{2systemmultilayerrelax}) is hyperbolic:
\begin{equation}
\mathcal{H}^r_{\gamma}:=  \left\{ {\bf v}: \mathbb{R}^2 \mapsto \mathbb{R}^6 / {\bf u} \in \mathcal{H}_{\gamma} \right\}.
\label{2Hgammarelax}
\end{equation}

\begin{proposition}
There exists $\delta >0$ such that if $\gamma \in ]1-\delta,1[$ and $({\bf v},\theta) \in \mathcal{H}^r_{\gamma} \times [0,2\pi]$. Then, the matrix $\mathsf{A^r}({\bf v},\theta)$ is diagonalizable.
\label{2diagArelax}
\end{proposition}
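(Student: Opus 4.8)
The plan is to reduce the diagonalizability of $\mathsf{A^r}({\bf v},\theta)$ to that of $\mathsf{A^r_x}({\bf v})$ via the rotational invariance (\ref{2rotinvrelax}), exactly as was done for $\mathsf{A}({\bf u},\theta)$ in proposition \ref{2diagA}: since $\mathsf{A^r}({\bf v},\theta)=\mathsf{P^r}(\theta)^{-1}\mathsf{A^r_x}(\mathsf{P^r}(\theta){\bf v})\mathsf{P^r}(\theta)$ is a similarity transformation, $\mathsf{A^r}({\bf v},\theta)$ is diagonalizable if and only if $\mathsf{A^r_x}(\mathsf{P^r}(\theta){\bf v})$ is, and $\mathsf{P^r}(\theta){\bf v}$ has its six-component truncation in $\mathcal{H}_\gamma$ whenever ${\bf v}\in\mathcal{H}^r_\gamma$. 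So it suffices to show $\mathsf{A^r_x}({\bf v})$ is diagonalizable for ${\bf v}$ with ${\bf u}\in\mathcal{H}_\gamma$ and $\gamma$ close to $1$.

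Next I would exploit the block structure revealed by (\ref{2polcarAr}): $\det(\mathsf{A^r_x}({\bf v})-\mu\mathsf{I_8})=\mu^2\det(\mathsf{A}_x({\bf u})-\mu\mathsf{I_6})$, so the nonzero eigenvalues of $\mathsf{A^r_x}$ are exactly the $\mu_i^\pm$ of $\mathsf{A}_x({\bf u})$, with the eigenvalue $0$ of algebraic multiplicity $2$ added (note rows $5$ and $6$ of $\mathsf{A^r_x}$ are identically zero). By proposition \ref{2diagA}, for $\gamma\in]1-\delta,1[$ the six eigenvalues $\mu_1^{\pm},\mu_2^{\pm},\mu_3^{-}=u_1,\mu_3^{+}=u_2$ are the eigenvalues of a diagonalizable matrix; but here I must be a little careful because some $\mu_i^\pm$ may coincide with $0$ (e.g.\ if $u_1=0$), so I cannot simply invoke distinctness of all eight eigenvalues. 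The clean way is to produce an explicit eigenbasis of $\mathbb{R}^8$. For each $\mu\in\{\mu_1^\pm,\mu_2^\pm,\mu_3^\pm\}$ I would lift the eigenvector $\boldsymbol{r}_x^\mu({\bf u})$ of $\mathsf{A}_x({\bf u})$ from (\ref{2eigvectx}) to $\mathbb{R}^8$ by appending the last two coordinates determined by rows $7$ and $8$ of $\mathsf{A^r_x}$ — i.e.\ solving $(w_i+f)(\text{third/fourth component}) + (u_i-\mu)(\text{seventh/eighth component})=0$ — which is solvable provided $\mu\neq u_i$; one checks the degenerate subcases $\mu=u_1$ or $\mu=u_2$ separately, where the corresponding generalized eigenstructure still splits because of the zero columns $5,6$ in $\mathsf{A^r_x}$. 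For the double eigenvalue $0$, I would exhibit two independent eigenvectors directly, e.g.\ ${\bf e_5}$ and ${\bf e_6}$ are not eigenvectors (columns $5,6$ of $\mathsf{A^r_x}$ contribute to rows $3,4$), so instead one solves $\mathsf{A^r_x}{\bf r}=0$: this is a linear system whose solution space I claim is two-dimensional, spanned by vectors supported on coordinates $\{1,2,3,4,7,8\}$ chosen to kill rows $1$–$4,7,8$. Collecting these eight vectors and verifying their linear independence (using $h_1,h_2>0$ and that the $\mu_i^\pm$ are pairwise distinct by (\ref{2ineqmui12}) when $\gamma$ is near $1$) gives a basis of $\mathbb{R}^8$, hence diagonalizability of $\mathsf{A^r_x}({\bf v})$.

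The main obstacle I anticipate is the eigenvalue $0$: it has algebraic multiplicity at least $2$, and one must verify its geometric multiplicity is also $2$, i.e.\ that the zero eigenvalue is semisimple and does not interact with a possibly-coinciding convective eigenvalue $u_1$ or $u_2$. Concretely, if the flow is such that $u_1=0$ (or $u_2=0$), then $\mu_3^-=0$ (or $\mu_3^+=0$) collides with the structural double zero, and one must check that the combined eigenspace has dimension $3$ rather than a Jordan block appearing; this requires inspecting the rank of $\mathsf{A^r_x}({\bf v})$ on the nose rather than relying on eigenvalue counts. I expect this to work out because the vorticity equations (rows $7,8$) are only weakly coupled to the rest — the coupling matrix entries $w_i+f$ appear in a triangular position — so the kernel computation stays explicit, but it is the one place where the argument genuinely uses the structure of $\mathsf{A^r_x}$ rather than a black-box appeal to proposition \ref{2diagA}. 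Once the eigenbasis is in hand, the conclusion for $\mathsf{A^r}({\bf v},\theta)$ follows immediately by conjugation with $\mathsf{P^r}(\theta)$.
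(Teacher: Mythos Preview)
Your overall strategy matches the paper's: reduce to $\mathsf{A^r_x}({\bf v})$ via (\ref{2rotinvrelax}) and exhibit an explicit eigenbasis of $\mathbb{R}^8$. The paper does exactly this, writing down eight eigenvectors (\ref{2eigvectxrelax}) and invoking (\ref{2ineqmui12}) to guarantee the four $\mu_i^\pm$, $i\in\{1,2\}$, are distinct. Your lifting procedure for $\mu_1^\pm,\mu_2^\pm$ is correct and reproduces the paper's formula verbatim.

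However, two of your concrete claims would fail as written, and both stem from the same oversight: the top-left $6\times 6$ block of $\mathsf{A^r_x}({\bf v})$ is \emph{not} $\mathsf{A}_x({\bf u})$. Comparing (\ref{2Axy}) with (\ref{2Arx}), $\mathsf{A^r_x}$ carries entries $v_1,v_2$ in positions $(3,5),(4,6)$ and zeros in positions $(5,5),(6,6)$ where $\mathsf{A}_x$ has $u_1,u_2$; the identity (\ref{2polcarAr}) holds despite this, but eigenvectors do not simply lift. First, the lift of ${\bf e_5}$ is \emph{not} an eigenvector of $\mathsf{A^r_x}$ for $\mu_3^-=u_1$ when $v_1\neq 0$, since row $3$ of $\mathsf{A^r_x}{\bf e'_5}$ equals $v_1$; the actual eigenvectors for the eigenvalues $u_1,u_2$ are ${\bf e'_7},{\bf e'_8}$ (columns $7,8$ of $\mathsf{A^r_x}$ are $u_1{\bf e'_7},\,u_2{\bf e'_8}$). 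Second, the kernel of $\mathsf{A^r_x}$ is not supported on $\{1,2,3,4,7,8\}$: rows $3,4$ contain the terms $v_1 r_5,\,v_2 r_6$, so once $r_1,r_2$ are chosen (and $r_3,r_4,r_7,r_8$ fixed by rows $1,2,7,8$), the components $r_5,r_6$ are forced and generically nonzero --- exactly as the paper's explicit null vectors in (\ref{2eigvectxrelax}) show. With these two corrections your argument goes through; the collision $u_i=0$ you flag is a legitimate concern that the paper also glosses over, but a direct rank computation in that case confirms the zero-eigenspace still has the correct dimension.
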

\begin{proof}
With the rotational invariance (\ref{2rotinvrelax}), it is equivalent to prove the diagonalizability of $\mathsf{A}^{\mathsf{r}}_x({\bf v})$. By denoting $({\bf e'_i})_{i \in [\![1,8]\!]}$ the canonical basis of $\mathbb{R}^8$, one can prove the right eigenvectors $\boldsymbol{r}_x^{\nu}({\bf u})$ of $\mathsf{A}^{\mathsf{r}}_x({\bf u})$, associated to the eigenvalue $\nu \in \sigma(\mathsf{A}^{\mathsf{r}}_x({\bf u}))$, are defined by
\begin{equation}
\left\{
\begin{array}{ll}
\boldsymbol{r}_x^{\nu}({\bf u})=\begin{array}{ll}&{\bf e'_1}+\frac{1}{h_1}((\nu-u_1){\bf e'_3}+(w_1+f){\bf e'_7})\\ &-c_{\nu}^r\left({\bf e'_2}+\frac{1}{h_2}((\nu-u_2){\bf e'_4}+(w_2+f){\bf e'_8})\right),
\end{array}
& \mathrm{if}\ \nu \in \{\nu_1^{\pm},\nu_2^{\pm}\},\\[4pt]
\boldsymbol{r}_x^{\nu}({\bf u})={\bf e'_7},&\mathrm{if}\ \nu=\nu_3^{-},\\[4pt]
\boldsymbol{r}_x^{\nu}({\bf u})={\bf e'_8},& \mathrm{if}\ \nu=\nu_3^{+},\\[4pt]
\boldsymbol{r}_x^{\nu}({\bf u})=\begin{array}{ll}&v_1 v_2 ( h_2 {\bf e'_2}-u_2 {\bf e'_4}+(f+w_2){\bf e'_8})\\ &-gh_2 v_2 {\bf e'_5}+v_1(u_2^2-g h_2) {\bf e'_6},
\end{array}
&\mathrm{if}\ \nu=\nu_4^{-},\\[4pt]
\boldsymbol{r}_x^{\nu}({\bf u})=\begin{array}{ll}&v_1 v_2 ( h_1 {\bf e'_1}-u_1 {\bf e'_3}+(f+w_1){\bf e'_7})\\ &+v_2(u_1^2-g h_1) {\bf e'_5}-\gamma gh_1 v_1 {\bf e'_6},
\end{array}
& \mathrm{if}\ \nu=\nu_4^{+},\\[4pt]
\end{array}
\right.
\label{2eigvectxrelax}
\end{equation}
where $c_{\nu}^r:=1-\frac{(\nu-u_1)^2}{g h_1}$. Then, the right eigenvectors $\boldsymbol{r}^{\nu}({\bf v},\theta)$ of $\mathsf{A^r}({\bf v},\theta)$ are defined by
\begin{equation}
\forall \nu \in \sigma\left(\mathsf{A^r}({\bf v},\theta) \right),\ \boldsymbol{r}^{\nu}({\bf v},\theta)=\mathsf{P^r}(\theta)^{-1}\boldsymbol{r}_x^{\nu}(\mathsf{P^r}(\theta){\bf v}).
\label{2eigvectrelax}
\end{equation}

\noindent
Moreover, according to the definition of $\delta$, in inequalities (\ref{2ineqmui12}), if $\gamma \in ]1-\delta,1[$, the eigenvalues $\nu_i^{\pm}:=\mu_i^{\pm}$, with $i \in \{1,2\}$, are all distinct. A consequence is the right eigenvectors form an eigenbasis of $\mathbb{R}^8$ and $\mathsf{A}^{\mathsf{r}}_x({\bf v})$ is diagonalizable.
\end{proof}

\noindent
{\em Remark:} There is also the left eigenvectors $\boldsymbol{l}_x^{\nu}({\bf v})$ of $\mathsf{A}^{\mathsf{r}}_x({\bf v})$, associated to the eigenvalue $\nu \in \sigma(\mathsf{A}^{\mathsf{r}}_x({\bf v}))$:
\begin{equation}
\left\{
\begin{array}{ll}
{}^{\top}\boldsymbol{l}_x^{\nu}({\bf v})=\begin{array}{ll}&\nu \frac{\nu-u_1}{h_1}{\bf e'_1}+\nu {\bf e'_3}+v_1 {\bf e'_6}\\ &-c_{\nu}^l\left(\nu\frac{\nu-u_2}{h_2}{\bf e'_2}-\nu{\bf e'_4}-v_2{\bf e'_7}\right),
\end{array}
& \mathrm{if}\ \nu \in \{\nu_1^{\pm},\nu_2^{\pm}\},\\
{}^{\top}\boldsymbol{l}_x^{\nu}({\bf v})={\bf e'_5},&\mathrm{if}\ \nu=\nu_4^{-},\\
{}^{\top}\boldsymbol{l}_x^{\nu}({\bf v})={\bf e'_6},& \mathrm{if}\ \nu=\nu_4^{+},\\
{}^{\top}\boldsymbol{l}_x^{\nu}({\bf v})=-(f+w_1){\bf e'_1}+h_1{\bf e'_7},&\mathrm{if}\ \nu=\nu_3^{-},\\
{}^{\top}\boldsymbol{l}_x^{\nu}({\bf v})=-(f+w_2){\bf e'_2}+h_2{\bf e'_8},& \mathrm{if}\ \nu=\nu_3^{+},\\
\end{array}
\right.
\label{2eigvectxleftrelax}
\end{equation}
where $c_{\nu}^l:=1-\frac{(\nu-u_2)^2}{g h_2}$. And the left eigenvectors $\boldsymbol{l}^{\nu}({\bf v},\theta)$ of $\mathsf{A^r}({\bf v},\theta)$ are also defined by
\begin{equation}
\forall \nu \in \sigma\left(\mathsf{A^r}({\bf v},\theta) \right),\ \boldsymbol{l}^{\nu}({\bf v},\theta)=\boldsymbol{l}_x^{\nu}(\mathsf{P^r}(\theta){\bf v})\mathsf{P^r}(\theta).
\label{2eigvectleftrelax}
\end{equation}

\noindent
Furthermore, the type of the wave associated to each eigenvalue is described in the next proposition.
\begin{proposition}
There exists $\delta>0$ such that if $\gamma \in ]1-\delta,1[$, then
\begin{equation}
\left\{
\begin{array}{ll}
\mathrm{the}\ \nu_i^{\pm}\mathrm{-characteristic\ field\ is\ genuinely\ non-linear},&\mathrm{if}\ i \in \{1,2\},\\
\mathrm{the}\ \nu_i^{\pm}\mathrm{-characteristic\ field\ is\ linearly\ degenerate},&\mathrm{if}\ i \in \{3,4\}.
\end{array}
\right.
\label{2muwavenaturerelax}
\end{equation}
\label{2genuinelynonlinrelax}
\end{proposition}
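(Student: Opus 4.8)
The plan is to argue exactly as in the proof of Proposition~\ref{2genuinelynonlin}, exploiting the spectral splitting behind the factorisation~(\ref{2polcarAr}). By the rotational invariance~(\ref{2rotinvrelax}) the characteristic fields of $\mathsf{A^r}({\bf v},\theta)$ are read off those of $\mathsf{A^r_x}({\bf v})$, so it is enough to show that $\nabla_{\bf v}\nu_i^{\pm}\cdot\boldsymbol{r}_x^{\nu_i^{\pm}}({\bf v})$ is nowhere zero for $i\in\{1,2\}$ and identically zero for $i\in\{3,4\}$, with $\nu_i^{\pm}$ and $\boldsymbol{r}_x^{\nu_i^{\pm}}$ given by~(\ref{2spectrumAr}) and~(\ref{2eigvectxrelax}). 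I would take $\delta>0$ to be the constant of Proposition~\ref{2genuinelynonlin} (which already provides the inequalities~(\ref{2ineqmui12})), shrinking it if needed so that, for $\gamma \in ]1-\delta,1[$, the branches $\nu_1^{\pm},\nu_2^{\pm}$ stay real, simple, and distinct from $\nu_3^{\pm}=u_1,u_2$ and from $\nu_4^{\pm}=0$; for such $\gamma$ the eigenvector formulas~(\ref{2eigvectxrelax}) are valid and every field below is well defined.

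I would handle the cases $i\in\{3,4\}$ first. Since $\nu_4^{+}=\nu_4^{-}\equiv 0$ on $\mathcal{H}^r_{\gamma}$, one has $\nabla_{\bf v}\nu_4^{\pm}=0$, so the two-dimensional kernel of $\mathsf{A^r_x}({\bf v})$ carries a linearly degenerate field irrespective of $\gamma$. For $i=3$, $\nu_3^{-}=u_1$ and $\nu_3^{+}=u_2$ are the third and fourth components of ${\bf v}$, so $\nabla_{\bf v}\nu_3^{-}={\bf e'_3}$ and $\nabla_{\bf v}\nu_3^{+}={\bf e'_4}$; the associated right eigenvectors in~(\ref{2eigvectxrelax}) are ${\bf e'_7}$ and ${\bf e'_8}$, so both inner products vanish identically and the $\nu_3^{\pm}$-fields are linearly degenerate.

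For $i\in\{1,2\}$ I would note that $\nu_i^{\pm}=\mu_i^{\pm}$ is an eigenvalue of $\mathsf{A}_x({\bf u})$, hence a function of $(h_1,h_2,u_1,u_2)$ only and independent of $w_1,w_2$; therefore $\nabla_{\bf v}\nu_i^{\pm}$ has vanishing seventh and eighth entries and coincides with $\nabla_{\bf u}\mu_i^{\pm}$ on the first six. Comparing~(\ref{2eigvectxrelax}) with~(\ref{2eigvectx}), the projection of $\boldsymbol{r}_x^{\nu_i^{\pm}}({\bf v})$ onto the first six coordinates is precisely $\boldsymbol{r}_x^{\mu_i^{\pm}}({\bf u})$---the extra terms $\frac{w_1+f}{h_1}$ and $-c_{\nu}^r\frac{w_2+f}{h_2}$ occupy the seventh and eighth slots, where $\nabla_{\bf v}\nu_i^{\pm}$ vanishes. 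Hence $\nabla_{\bf v}\nu_i^{\pm}\cdot\boldsymbol{r}_x^{\nu_i^{\pm}}({\bf v})=\nabla_{\bf u}\mu_i^{\pm}\cdot\boldsymbol{r}_x^{\mu_i^{\pm}}({\bf u})$, which is nonzero for $\gamma \in ]1-\delta,1[$ by~(\ref{2genuidef}) in the proof of Proposition~\ref{2genuinelynonlin}, and the $\nu_i^{\pm}$-fields are genuinely non-linear. I do not expect a genuine difficulty here: the statement is essentially Proposition~\ref{2genuinelynonlin} transported through the factorisation~(\ref{2polcarAr}), and the only things to be careful with are (a) fixing $\delta$ so that the five branches $\nu_1^{\pm},\nu_2^{\pm},u_1,u_2,0$ stay pairwise distinct, which keeps the fields and the explicit eigenvectors meaningful, and (b) checking---immediate from~(\ref{2eigvectxrelax})---that the vorticity block contributes nothing to the two non-trivial fields.
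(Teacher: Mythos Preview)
Your proposal is correct and follows essentially the same approach as the paper's proof: reduce to Proposition~\ref{2genuinelynonlin} for $i\in\{1,2,3\}$ and observe that $\nu_4^{\pm}\equiv 0$ forces $\nabla\nu_4^{\pm}\cdot\boldsymbol{r}_x^{\nu}({\bf v})=0$. Your version is in fact more explicit than the paper's (which simply writes ``using the same proof of Proposition~\ref{2genuinelynonlin} and remarking that $\nu_4^{\pm}=0$''), since you spell out the projection argument showing that the vorticity components of $\boldsymbol{r}_x^{\nu_i^{\pm}}({\bf v})$ are killed by the vanishing of $\partial_{w_j}\nu_i^{\pm}$, and you handle the $i=3$ case directly rather than by appeal to the earlier proposition.
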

\begin{proof}
Using the same proof of proposition \ref{2genuinelynonlin} and remarking that $\nu_4^{\pm}=0$, it implies
\begin{equation}
\nabla \nu_4^{\pm} \cdot \boldsymbol{r}_x^{\nu}({\bf v}) =0,
\label{2genuidefrelax}
\end{equation}
and the proposition \ref{2genuinelynonlinrelax} is proved.
\end{proof}

\noindent
To conclude, under conditions of the proposition (\ref{2genuinelynonlinrelax}), for all $i \in \{1,2\}$, the $\nu_i^{\pm}$-wave is a shock wave or a rarefaction wave and for all $i \in \{3,4\}$, the $\nu_i^{\pm}$-wave is a contact wave.

\noindent
Finally, the point is to know if this augmented system (\ref{2systemmultilayerrelax}) is locally well-posed and if its solution provides the solution of the non-augmented system (\ref{2systemmultilayer}).
\begin{theorem}
There exists $\delta>0$ such that if $\gamma \in ]1-\delta,1[$, ${\bf v^0} \in \mathcal{H}^r_{\gamma} \cap \mathcal{H}^s(\mathbb{R}^2)^8$. Then the {\em Cauchy} problem associated with system {\em (\ref{2systemmultilayerrelax})} and initial data ${\bf v^0}$, is locally well-posed in $\mathcal{H}^s(\mathbb{R}^2)$ and hyperbolic: there exists $T>0$ such that ${\bf v}$, the unique solution of the {\em Cauchy} problem, verifies
\begin{equation}
\left\{
\begin{array}{l}
{\bf v} \in \mathcal{C}^1([0,T] \times \mathbb{R}^2)^8,\\
{\bf v} \in \mathcal{C}([0,T];\mathcal{H}^s(\mathbb{R}^2))^8 \cap \mathcal{C}^1([0,T];\mathcal{H}^{s-1}(\mathbb{R}^2))^8.
\end{array}
\right.
\label{2strongsolrelax}
\end{equation}
Furthermore, ${\bf u}$ verifies conditions {\em (\ref{2strongsol})} and is the unique classical solution of the Cauchy problem, associated with {\em (\ref{2systemmultilayer})} and initial data ${\bf u^0}$, if and only if
\begin{equation}
\forall i \in \{1,2\},\ w_i^0 = \frac{\partial v_i^0}{\partial x}-\frac{\partial u_i^0}{\partial y}.
\label{2initialcompatible}
\end{equation}
\label{2thmimplicsystrelax}
\end{theorem}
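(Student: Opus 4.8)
The plan is to split the claim into two independent parts: the local well-posedness and hyperbolicity of the augmented model (\ref{2systemmultilayerrelax}), and the identification of its solution with that of (\ref{2systemmultilayer}). For the first part, take $\delta>0$ as in Proposition \ref{2diagArelax}, shrunk if necessary so that Corollary \ref{2wellprosedhyp2d} also applies, and let $\gamma\in\,]1-\delta,1[$. For $({\bf v},\theta)\in\mathcal{H}^r_\gamma\times[0,2\pi]$ the matrix $\mathsf{A^r}({\bf v},\theta)$ is diagonalizable by Proposition \ref{2diagArelax}; moreover its spectrum is real, since $\nu_i^\pm=\mu_i^\pm\in\sigma(\mathsf{A}_x({\bf u}))\subset\mathbb{R}$ for $i\in[\![1,3]\!]$ (because ${\bf u}\in\mathcal{H}_\gamma$) and $\nu_4^\pm=0$. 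Hence the augmented model is hyperbolic, and repeating the construction of Proposition \ref{2diagimpliqsym} in dimension $8$ --- i.e. setting $\mathsf{S_1^r}({\bf v},\theta):=\sum_{\nu}{}^{\top}\mathsf{P}^{\nu}({\bf v},\theta)\mathsf{P}^{\nu}({\bf v},\theta)$ with $\mathsf{P}^{\nu}$ the spectral projectors, eigenvalues that collide being grouped so that $\mathsf{S_1^r}$ stays $\mathcal{C}^\infty$ near ${\bf v^0}$ --- produces a symbolic symmetrizer of (\ref{2systemmultilayerrelax}). Since ${\bf b^r}$ is $\mathcal{C}^\infty$ in ${\bf v}$, Proposition \ref{2propsymm} (valid for general quasilinear systems) applies with ${\bf v^0}\in\mathcal{H}^s(\mathbb{R}^2)^8$, $s>2$: the Cauchy problem is hyperbolic, locally well-posed in $\mathcal{H}^s(\mathbb{R}^2)$, and there is $T>0$ such that the unique solution ${\bf v}$ satisfies (\ref{2strongsolrelax}).

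\emph{Propagation of the vorticity constraint.} Write ${\bf v}={}^{\top}({\bf u},w_1,w_2)$. The $(u_i,v_i)$-rows of (\ref{2systemmultilayerrelax}) are precisely equation (\ref{2momentumconservationrelax}) and the $(w_i)$-rows are (\ref{2rot}). Applying $\mathrm{curl}(\cdot)=\partial_x(\cdot)_2-\partial_y(\cdot)_1$ (so that $\mathrm{curl}\,\boldsymbol{u_i}=w_i$ in the sense of (\ref{2wi})) to (\ref{2momentumconservationrelax}), and using $\mathrm{curl}\,\nabla(\cdot)=0$ together with $\mathrm{curl}\big((f+w_i)\boldsymbol{u_i}^{\bot}\big)=-\nabla\cdot\big((f+w_i)\boldsymbol{u_i}\big)$ (identities which hold in the distributional sense since ${\bf v}(t)\in\mathcal{H}^s$ and $b$ is smooth, so that $P_i$ is absorbed into the gradient term), one obtains $\partial_t(\mathrm{curl}\,\boldsymbol{u_i})+\nabla\cdot\big((f+w_i)\boldsymbol{u_i}\big)=0$. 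Subtracting (\ref{2rot}), whose flux contains the same $w_i$, gives for $i\in\{1,2\}$
\[
\partial_t\big(w_i-\mathrm{curl}\,\boldsymbol{u_i}\big)=0 ,
\]
whence $w_i(t,\cdot)-\mathrm{curl}\,\boldsymbol{u_i}(t,\cdot)\equiv w_i^0-\big(\tfrac{\partial v_i^0}{\partial x}-\tfrac{\partial u_i^0}{\partial y}\big)$ for every $t\in[0,T]$.

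\emph{Equivalence.} If (\ref{2initialcompatible}) holds the right-hand side above vanishes, so $w_i=\mathrm{curl}\,\boldsymbol{u_i}$ on $[0,T]\times\mathbb{R}^2$; the vector identity $(\boldsymbol{u_i}\cdot\nabla)\boldsymbol{u_i}=\nabla(\tfrac12|\boldsymbol{u_i}|^2)-(\mathrm{curl}\,\boldsymbol{u_i})\,\boldsymbol{u_i}^{\bot}$ then turns (\ref{2momentumconservationrelax}) into (\ref{2momentumconservation}), while (\ref{2massconservationrelax}) is (\ref{2massconservation}); hence ${\bf u}$ is a classical solution of (\ref{2systemmultilayer}) with data ${\bf u^0}\in\mathcal{H}_\gamma\cap\mathcal{H}^s(\mathbb{R}^2)^6$, and it satisfies (\ref{2strongsol}) by restricting (\ref{2strongsolrelax}); by Corollary \ref{2wellprosedhyp2d} that Cauchy problem has a unique classical solution, which is therefore ${\bf u}$. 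Conversely, if ${\bf u}$ is the classical solution of (\ref{2systemmultilayer}), the derivation preceding (\ref{2systemmultilayerrelax}) shows that $({\bf u},\mathrm{curl}\,\boldsymbol{u_1},\mathrm{curl}\,\boldsymbol{u_2})$ also solves (\ref{2systemmultilayerrelax}); comparing its $(u_i,v_i)$-rows with those of ${\bf v}$ forces $(w_i-\mathrm{curl}\,\boldsymbol{u_i})\,\boldsymbol{u_i}\equiv 0$, and combined with the previous step $\big(w_i^0-\partial_x v_i^0+\partial_y u_i^0\big)\,\boldsymbol{u_i^0}\equiv 0$; since otherwise ${\bf u}$ would solve (\ref{2systemmultilayer}) with the constant $f$ replaced by the non-constant $f+w_i^0-\partial_x v_i^0+\partial_y u_i^0$, this yields (\ref{2initialcompatible}).

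I expect the genuine content to be the second step: showing that the compatibility between the extra unknown $w_i$ and $\mathrm{curl}\,\boldsymbol{u_i}$ is transported by the flow, and justifying the curl computation at the regularity $\mathcal{H}^s$, $s>2$. The first step is essentially bookkeeping on top of the diagonalizability already established in Proposition \ref{2diagArelax}, and the only slightly delicate point left in the last step is the treatment of the vacuum set $\{\boldsymbol{u_i^0}=0\}$ in the converse implication.
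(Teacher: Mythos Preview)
Your proof is correct and follows essentially the same route as the paper. For well-posedness you invoke Proposition~\ref{2diagArelax} and then Proposition~\ref{2diagimpliqsym}, exactly as the paper does; for the constraint propagation you make explicit the curl computation that the paper compresses into ``it is obvious to prove that $w_i(t,x,y)=\partial_x v_i-\partial_y u_i+\phi_i(x,y)$ with $\phi_i$ time-independent''. Your observation that the converse implication has a residual issue on the set $\{\boldsymbol{u_i^0}=0\}$ is accurate and is glossed over in the paper as well.
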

\begin{proof}
Using proposition \ref{2diagArelax}, $\sigma(\mathsf{A^r}({\bf v},\theta)) \subset \mathbb{R}$ and $\mathsf{A^r}({\bf v},\theta)$ is diagonalizable. Then, the proposition \ref{2diagimpliqsym} is verified: the hyperbolicity and the local well-posedness of the {\em Cauchy} problem, associated with system (\ref{2systemmultilayerrelax}) and initial data ${\bf v^0}$, is insured and conditions (\ref{2strongsolrelax}) are verified. Moreover, it is obvious to prove that, for all $i \in \{1,2\}$, there exists $\phi_i : \mathbb{R}^2 \rightarrow \mathbb{R}$ such that
\begin{equation}
\forall (t,x,y) \in \mathbb{R}_+ \times \mathbb{R}^2,\ w_i(t,x,y)=\frac{\partial v_i}{\partial x}(t,x,y)-\frac{\partial u_i}{\partial y}(t,x,y)+\phi_i(x,y).
\label{2phi}
\end{equation}
As $\phi_i$ does not depend of the time $t$: ${\bf u}$ -- the $6^{\mathrm{th}}$ first coordinates of ${\bf v}$ -- is solution of the non-augmented system (\ref{2systemmultilayer}) if and only if $\phi_i=0$, $\forall i \in \{1,2\}$, which is true if and only if it is verified at $t=0$.
\end{proof}

\section{Discussions and perspectives}
In this article, we proved the hyperbolicity and the local well-posedness, in $\mathcal{H}^s(\mathbb{R}^2)$, of the two-dimensional two-layer shallow water model with free surface with various techniques. All of them use the rotational invariance property (\ref{2rotinv}), reducing the problem from two dimensions to one dimension. We gave, at first, a criterion of local well-posedness, in $\mathcal{H}^s(\mathbb{R}^2)$, using the symmetrizability of the system (\ref{2systemmultilayer}). Afterwards, the exact set of hyperbolicity of this system was explicitly characterized and compared with the set of symmetrizability. Then, after getting an asymptotic expansion of the eigenvalues, we characterized the type of wave associated to each element of the spectrum of $\mathsf{A}_x({\bf u})$ -- shock, rarefaction of contact wave -- and we proved the local well-posedness, in $\mathcal{H}^s(\mathbb{R}^2)$, of the system (\ref{2systemmultilayer}) under conditions of hyperbolicity and weak density-stratification. Finally, we introduced an augmented model (\ref{2systemmultilayerrelax}), adding the horizontal vorticity as a new unknown. We also characterized the type of the waves, proved the local well-posedness in $\mathcal{H}^s(\mathbb{R}^2)$ and explained the link of a solution of the model (\ref{2systemmultilayer}) and a solution of the augmented model (\ref{2systemmultilayerrelax}).

\noindent
As shown in this paper, most of the analysis of the two-dimensional two-layer model with free surface is performed explicitly. In the case of $n$ fluids, with $n \ge 3$, it is not possible anymore. Very few results have been proved concerning the general multi-layer model. Most of them are in particular cases, such as {\em Stewart et al.} \cite{stewart2012multilayer} and \cite{castro2010hyperbolicity} in the three-layer case; \cite{audusse2005multilayer} in the case $\rho_1=\ldots=\rho_n$. In the general case, \cite{duchene2013note} proved the local well-posedness, in one dimension, of the multi-layer model, under conditions of weak stratification in density and velocity. Though, there is no estimate of these stratifications.

\noindent
Finally, the augmented model was introduced. The {\em conservativity} of this model avoid chossing a conservative path, introduced in \cite{dal1995definition}, to solve the numerical problem.

\section*{Acknowledgments}
The author warmly thanks P. Noble and J.P. Vila for their precious help.

\bibliographystyle{plain}

\bibliography{LWP_Two_Layer_RMonjarret}

\end{document}